\documentclass[onefignum,onetabnum]{siamart171218}


\usepackage{amsmath}
\usepackage{amsfonts}
\usepackage{amssymb}
\usepackage{graphicx}
\usepackage{epstopdf}
\usepackage{algorithmic}
\ifpdf
  \DeclareGraphicsExtensions{.eps,.pdf,.png,.jpg}
\else
  \DeclareGraphicsExtensions{.eps}
\fi

\usepackage{bm}
\usepackage{ulem}
\usepackage{lineno,hyperref}
\usepackage{stmaryrd}
\setlength{\hoffset}{.6in}

\newtheorem{remark}{Remark}

\def\D{\Omega}
\def\T{\mathcal{T}}
\def\E{\widetilde{E}}

\newcommand{\rd}{{\rm d}}

\newcommand{\eps}{\epsilon}

\newcommand{\Ome}{\Omega}
\newcommand{\p}{\partial}

\def\esssupT{\underset{t\in [0,T]}{\mbox{\rm ess sup }}}

\def\supn{\underset{1\leq n \leq \ell}{\mbox{\rm sup }}}

\newcommand{\triplenorm}[1]{%
  \left\vert\kern-0.9pt\left\vert\kern-0.9pt\left\vert #1
  \right\vert\kern-0.9pt\right\vert\kern-0.9pt\right\vert}

\begin{document}

\headers{Morley element for the CH equation and the HS flow}{S. Wu and
Y. Li}
\title{Analysis of the Morley element for the Cahn-Hilliard equation
  and the Hele-Shaw flow 
\thanks{The work of Shuonan Wu is partially supported by the startup
  grant from Peking Unversity.}
}

\author{
Shuonan Wu\thanks{School of Mathematical Sciences, Peking University,
China, 100871 ({\tt snwu@math.pku.edu.cn}) }
\and
Yukun Li\thanks{Department of Mathematics, The Ohio State University,
Columbus, U.S.A. ({\tt li.7907@osu.edu})}
}

\maketitle

\begin{abstract}
The paper analyzes the Morley element method for the Cahn-Hilliard
equation. The objective is to derive the optimal error estimates and
to prove the zero-level sets of the Cahn-Hilliard equation approximate
the Hele-Shaw flow. If the piecewise $L^{\infty}(H^2)$ error bound is
derived by choosing test function directly, we cannot obtain the
optimal error order, and we cannot establish the error bound which depends
on $\frac{1}{\epsilon}$ polynomially either. To overcome this
difficulty, this paper proves them by the following steps, and the
result in each next step cannot be established without using the
result in its previous one. First, it proves some a priori estimates
of the exact solution $u$, and these regularity results are minimal to
get the main results; Second, it establishes ${L^{\infty}(L^2)}$ and
piecewise ${L^2(H^2)}$ error bounds which depend on
$\frac{1}{\epsilon}$ polynomially based on the piecewise
${L^{\infty}(H^{-1})}$ and ${L^2(H^1)}$ error bounds; Third, it
establishes piecewise ${L^{\infty}(H^2)}$ optimal error bound which
depends on $\frac{1}{\epsilon}$ polynomially based on the piecewise
${L^{\infty}(L^2)}$ and ${L^2(H^2)}$ error bounds; Finally, it proves
the ${L^\infty(L^\infty)}$ error bound and the approximation to the
Hele-Shaw flow based on the piecewise ${L^{\infty}(H^2)}$ error bound.
The nonstandard techniques are used in these steps such as the
generalized coercivity result, integration by part in space,
summation by part in time, and special properties of the Morley
elements. If one of these techniques is lacked, either we can only
obtain the sub-optimal piecewise ${L^{\infty}(H^2)}$ error order, or we
can merely obtain the error bounds which are exponentially dependent on
$\frac{1}{\epsilon}$. The approach used in this paper provides a way
to bound the errors in higher norm from the errors in lower norm step
by step, which has a profound meaning in methodology.  Numerical results
are presented to validate the optimal $L^\infty(H^2)$ error order and
the asymptotic behavior of the solutions of the Cahn-Hilliard
equation.
\end{abstract}

\begin{keywords}
Morley element, Cahn-Hilliard equation, generalized coercivity result,
$\frac{1}{\epsilon}$ polynomial dependence, Hele-Shaw flow
\end{keywords}

\begin{AMS}
65N12, 
65N15, 
65N30 
\end{AMS}

\section{Introduction}
Consider the following Cahn-Hilliard equation with Neumann boundary conditions:
\begin{alignat}{2}
u_t +\Delta(\epsilon\Delta u -\frac{1}{\epsilon}f(u)) &=0  &&\quad \mbox{in } \Omega_T:=\Omega\times(0,T],\label{eq20170504_1}\\
\frac{\partial u}{\partial n}
=\frac{\partial}{\partial n}(\epsilon\Delta u-\frac{1}{\epsilon}f(u))
&=0 &&\quad \mbox{on } \partial\Omega_T:=\partial\Omega\times(0,T],
\label{eq20170504_2}\\
u &=u_0 &&\quad \mbox{in } \Omega\times\{t=0\},\label{eq20170504_3}
\end{alignat}
where $\Omega\subseteq \mathbf{R}^2$ is a bounded domain, $f(u) = u^3 - u$ is the derivative of a double well potential $F(u)$ which is defined by
\begin{equation}\label{eq20170504_5}
F(u)=\frac{1}{4}(u^2-1)^2.
\end{equation}
The Allen-Cahn equation \cite{allen1979microscopic, bartels2011robust,
chen1994spectrum, feng2003numerical, feng2014finite,
feng2014analysis, feng2017finite, ilmanen1993convergence}
and the Cahn-Hilliard equation
\cite{alikakos1994convergence,chen1994spectrum, kovacs2011finite,
wu2017multiphase} are
two basic phase field models to describe the phase transition process.
They are also proved to be related to geometric flow. For example, the
zero-level sets of the Allen-Cahn equation approximate the mean
curvature \cite{evans1992phase, ilmanen1993convergence} and the
zero-level sets of the Cahn-Hilliard equation approximate the
Hele-Shaw flow \cite{stoth1996convergence, alikakos1994convergence}.
The Cahn-Hilliard equation was introduced by J. Cahn and J. Hilliard
in \cite{cahn1958free} to describe the process of phase separation,
   by which the two components of a binary fluid separate and form
   domains pure in each component.  It can be interpreted as the
   $H^{-1}$ gradient flow \cite{alikakos1994convergence} of the
   Cahn-Hilliard energy functional
\begin{align}\label{eq2.1}
J_\epsilon(v):= \int_\Omega \Bigl( \frac\eps2 |\nabla v|^2+
    \frac{1}{\epsilon} F(v) \Bigr)\, \rd x.
\end{align}
There are a few papers
\cite{aristotelous2013mixed,
xu2016stability,du1991numerical,elliott1989nonconforming} discussing
the error bounds, which depend on the exponential power of
$\frac{1}{\epsilon}$, of the numerical methods for Cahn-Hilliard
equation.  Such an estimate is clearly not useful for small
$\epsilon$, in particular, in addressing the issue whether the
computed numerical interfaces converge to the original sharp
interface of the Hele-Shaw problem.  Instead, the polynomial
dependence in $\frac{1}{\epsilon}$ is proved in
\cite{feng2004error, feng2005numerical} using the standard finite element
method, and in \cite{feng2016analysis,li2015numerical} using the
discontinuous Galerkin method. Due to the high efficiency of the
Morley elements, compared with mixed finite element methods or
$C^1$-conforming finite element methods, the Morley finite element
method is used to derive the error bound which depends on
$\frac{1}{\epsilon}$ polynomially in this paper.

The highlights of this paper are fourfold. First, it establishes the
piecewise ${L^{\infty}(L^2)}$ and ${L^2(H^2)}$ error bounds which
depend on $\frac{1}{\epsilon}$ polynomially. If the standard technique
is used, we can only prove that the error bounds depend on
$\frac{1}{\epsilon}$ exponentially, which can not be used to
prove our main theorem. To prove these bounds, special properties of
the Morley elements are explored, i.e., Lemma 2.3 in
\cite{elliott1989nonconforming}, and piecewise
${L^{\infty}(H^{-1})}$ and ${L^2(H^1)}$ error
bounds \cite{li2017error} are required. Second, by making use of the
piecewise ${L^{\infty}(L^2)}$ and ${L^2(H^2)}$ error bounds above, it
establishes the piecewise ${L^{\infty}(H^2)}$ error bound which
depends on $\frac{1}{\epsilon}$ polynomially. If the standard
technique is used, we can only get the error bound in Remark
\ref{rmk20180823_1}, which does not have an optimal order. The crux
here is to employ the summation by part in time and integration by
part in space techniques simultaneously to handle the nonlinear term,
together with the special properties of the Morley elements.
Third, the minimal regularity of $u$ is used, i.e.,
$\|u_{tt}\|_{L^2(L^2)}$ regularity instead of
$\|u_{tt}\|_{L^\infty(L^2)}$ regularity is used, and the a priori
estimate is derived in Theorem \ref{thm20180609_1}. Fourth, the
${L^\infty(L^\infty)}$ error bound is established using the optimal
piecewise ${L^{\infty}(H^2)}$ error, by which the main result that the
zero-level sets of the Cahn-Hilliard equation approximate the
Hele-Shaw flow is proved in Section \ref{sec5}.

The organization of this paper is as follows. In Section \ref{sec2},
the standard Sobolev space notation is introduced, some
useful lemmas are stated, and a new a priori estimate of the exact
solution $u$ is derived. In Section \ref{sec3}, the fully discrete
approximation based on the Morley finite element space is presented. In
Section \ref{sec4}, first the polynomially dependent piecewise
${L^{\infty}(L^2)}$ and ${L^2(H^2)}$ error bounds are established
based on piecewise ${L^{\infty}(H^{-1})}$ and ${L^2(H^1)}$ error
bounds, then the polynomially dependent piecewise ${L^{\infty}(H^2)}$
error bound is established based on piecewise ${L^{\infty}(L^2)}$ and
${L^2(H^2)}$ error bounds, by which the ${L^\infty(L^\infty)}$ error
bound is proved. In Section \ref{sec5}, the approximation of the
zero-level sets of the Cahn-Hilliard equation of the Hele-Shaw flow is
proved. In Section \ref{sec6}, numerical tests are presented to
validate our theoretical results, including the optimal error orders
and the approximation of the Hele-Shaw flow.

\section{Preliminaries}\label{sec2}
In this section, we present some results which will be used in
the following sections. Throughout this paper, $C$ denotes a generic
positive constant which is independent of interfacial length
$\epsilon$, spacial size $h$, and time step size $k$, and it may have
different values in different formulas. The standard Sobolev space
notation below is used in this paper. 

\begin{alignat*}{2}
\|v\|_{0,p,A}&=\bigg(\int_{A}|v|^p\,\rd x\bigg)^{1\slash p}\qquad &&1\le
p<\infty,\\
\|v\|_{0,\infty,A}&=\underset{A}{\mbox{\rm ess sup }} |v|,\\
|v|_{m,p,A}&=\bigg(\sum_{|\alpha|=m}\|D^{\alpha}v\|_{0,p,A}^p\bigg)^{1\slash
p}\qquad &&1\le p<\infty,\\
\|v\|_{m,p,A}&=\bigg(\sum_{j=0}^m|v|_{m,p,A}^p\bigg)^{1\slash p}.
\end{alignat*}

Here $A$ denotes some domain, i.e., a single mesh element $K$ or the
whole domain $\Omega$. When $A=\Omega$, $\|\cdot\|_{H^k},
\|\cdot\|_{L^k}$ are used to denote $\|\cdot\|_{H^k(\Omega)},
\|\cdot\|_{L^k(\Omega)}$ respectively, and $\|\cdot\|_{0,2}$ is also used to denote $\|\cdot\|_{L^2(\Omega)}$. Let $\T_h$ be
a family of quasi-uniform triangulations of domain $\Omega$, and
$\mathcal{E}_h$ be a collection of edges, then the global mesh
dependent semi-norm, norm and inner product are defined below
\begin{align*}
|v|_{j,p,h}&=\bigg(\sum_{K\in\T_h}|v|_{j,p,K}^p\bigg)^{1\slash p},\\
\|v\|_{j,p,h}&=\bigg(\sum_{K\in\T_h}\|v\|_{j,p,K}^p\bigg)^{1\slash p},\\
(w,v)_h&=\sum_{K\in\T_h}\int_Kw(x)v(x)\,\rd x.
\end{align*}

Define $L^2_0(\Ome)$ as the mean zero functions in $L^2(\Ome)$.  For
$\Phi\in L_0^2(\Ome)$, let $u := -\Delta^{-1}\Phi \in H^1(\Ome)\cap
L^2_0(\Ome)$ such that
\begin{alignat}{2}
-\Delta u &= \Phi&&\qquad \mathrm{in}\ \Omega,\notag\\
\frac{\partial u}{\partial n}&= 0&&\qquad \mathrm{on}\
  \partial\Omega.\notag
\end{alignat}
Then we have
\begin{align}\label{eq6_add}
-(\nabla\Delta^{-1}\Phi,\nabla v) = (\Phi,v)\quad \mathrm{in}\
    \Omega\qquad\forall v\in H^1(\Omega)\cap
L^2_0(\Ome).
\end{align}
For $v\in L^2_0(\Ome)$ and $\Phi\in L^2_0(\Ome)$, define the
continuous $H^{-1}$ inner product by
\begin{align}\label{eq7_add}
(\Phi, v)_{H^{-1}} := (\nabla\Delta^{-1}\Phi,\nabla\Delta^{-1}v) =
(\Phi,-\Delta^{-1}v) = (v,-\Delta^{-1}\Phi).
\end{align}

As in \cite{chen1994spectrum, feng2016analysis, feng2004error,
feng2005numerical, li2015numerical, li2017error},
we made the following assumptions on the initial condition.  These
assumptions were used to derive the a priori estimates for the
solution of problem \eqref{eq20170504_1}--\eqref{eq20170504_5}.

{\bf General Assumption} (GA)
\begin{itemize}
\item[(1)] Assume that $m_0\in (-1,1)$ where
\begin{align*}
m_0:=\frac{1}{|\Omega|}\int_{\Omega}u_0(x)\,\rd x. 
\end{align*}
\item[(2)] There exists a nonnegative constant $\sigma_1$ such that
\begin{align*}
J_{\epsilon}(u_0)\leq C\epsilon^{-2\sigma_1}.
\end{align*}
\item[(3)]
There exist nonnegative constants $\sigma_2$, $\sigma_3$ and $\sigma_4$ such that
\begin{align*}
\big\|-\epsilon\Delta u_0 +\epsilon^{-1} f(u_0)\big\|_{H^{\ell}} \leq
C\epsilon^{-\sigma_{2+\ell}}\qquad \ell=0,1,2.
\end{align*}
\end{itemize}

Under the above assumptions, the following a priori estimates of the
solution were proved in
\cite{feng2016analysis,feng2004error, feng2005numerical, li2015numerical}.

\begin{theorem}\label{prop2.1}
The solution $u$ of problem \eqref{eq20170504_1}--\eqref{eq20170504_5}
satisfies the following energy estimate:
\begin{align}
&\esssupT  \Bigl( \frac{\epsilon}{2}\|\nabla u\|_{L^2}^2
    +\frac{1}{\epsilon}\|F(u)\|_{L^1} \Bigr)
  +\int_{0}^{T}\|u_t(s)\|_{H^{-1}}^2\, \rd s
\leq J_{\epsilon}(u_0)\label{eq2.5}.
\end{align}
Moreover, suppose that GA (1)--(3) hold, $u_0\in H^4(\Omega)$ and
$\p\Omega\in C^{2,1}$, then $u$ satisfies the additional estimates:
\begin{align}
&\frac{1}{|\Omega|}\int_{\Omega}u(x,t)\, \rd x=m_0 \quad\forall t\geq 0,
\label{eq2.8}\\
%
&\esssupT\|\Delta u\|_{L^2}\leq
C\epsilon^{-\max\{\sigma_1+\frac{5}{2},\sigma_3+1\}},\label{eq2.12}\\
&\esssupT\|\nabla\Delta u\|_{L^2}\leq
C\epsilon^{-\max\{\sigma_1+\frac{5}{2},\sigma_3+1\}},\label{eq2.13_add}\\
%
&\epsilon\int_0^{T}\|\Delta u_t\|_{L^2}^2\,\rd s+\esssupT\|u_t\|_{L^2}^2
\leq
C\epsilon^{-\max\{2\sigma_1+\frac{13}{2},2\sigma_3+\frac{7}{2},2\sigma_2+4,2\sigma_4\}}.\label{eq2.15_add}
\end{align}
Furthermore, if there exists $\sigma_5>0$ such that
\begin{equation}\label{eq2.17}
\mathop{\rm{lim}}_{s\rightarrow0^{+}}\limits\|\nabla
u_t(s)\|_{L^2}\leq C\epsilon^{-\sigma_5},
\end{equation}
then there hold
\begin{align}
&\esssupT\|\nabla u_t\|_{L^2}^2 + \epsilon\int_0^{T}\|\nabla\Delta u_t\|_{L^2}^2\,\rd s 
\leq C\rho_0(\epsilon),\label{eq2.18}\\
&\int_0^{T}\|u_{tt}\|_{H^{-1}}^2\,\rd s \leq C\rho_1(\epsilon),\label{eq2.19}
%
\end{align}
where
\begin{align*}
\rho_0(\epsilon)
  &:=\epsilon^{-\frac{1}{2}\max\{2\sigma_1+5,2\sigma_3+2\}
    -\max\{2\sigma_1+\frac{13}{2},2\sigma_3+\frac{7}{2},2\sigma_2+4\}}
    +\epsilon^{-2\sigma_5}\\
&\qquad 
+\epsilon^{-\max\{2\sigma_1+7,2\sigma_3+4\}},\\
\rho_1(\epsilon) &:=\epsilon \rho_0(\epsilon).
\end{align*}
\end{theorem}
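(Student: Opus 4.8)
The plan is to read \eqref{eq20170504_1}--\eqref{eq20170504_2} as the $H^{-1}$ gradient flow of $J_\epsilon$ and to obtain every bound by energy testing, organized as a bootstrap that climbs from $H^{-1}$ through $L^2$ into $H^2$ and $H^3$. Throughout I abbreviate the chemical potential $w:=-\epsilon\Delta u+\frac1\epsilon f(u)$, so that \eqref{eq20170504_1} reads $u_t=\Delta w$ with $\partial_n u=\partial_n w=0$, and I use repeatedly that, by \eqref{eq6_add}--\eqref{eq7_add}, $\|u_t\|_{H^{-1}}^2=\|\nabla w\|_{L^2}^2$ whenever $u_t$ has mean zero. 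The single device that keeps all $\frac1\epsilon$-powers polynomial rather than exponential is the spectrum estimate of \cite{chen1994spectrum}: there is a constant $C_0$ independent of $\epsilon$ with $\epsilon\|\nabla\psi\|_{L^2}^2+\frac1\epsilon(f'(u)\psi,\psi)\ge -C_0\|\psi\|_{H^{-1}}^2$ for all mean-zero $\psi\in H^1(\Omega)$; this is what replaces the crude bound $f'(u)\ge-1$, which would only yield a Gronwall factor $\exp(CT\epsilon^{-p})$.

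The base cases \eqref{eq2.5} and \eqref{eq2.8} are unconditional. Pairing $u_t=\Delta w$ with $w$ in $L^2$ and integrating by parts gives $\frac{\rd}{\rd t}J_\epsilon(u)=(w,u_t)=(w,\Delta w)=-\|\nabla w\|_{L^2}^2=-\|u_t\|_{H^{-1}}^2$, and integrating in $t$ produces \eqref{eq2.5}. Integrating \eqref{eq20170504_1} over $\Omega$ and using \eqref{eq20170504_2} kills the right-hand side, so $\frac{\rd}{\rd t}\int_\Omega u\,\rd x=0$, which is \eqref{eq2.8}.

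For the higher norms I differentiate the equation in time, set $v:=u_t$, and test in a descending family of norms. The differentiated equation is $v_t=\Delta w_t$ with $w_t=-\epsilon\Delta v+\frac1\epsilon f'(u)v$. Testing in the $H^{-1}$ inner product yields $\frac12\frac{\rd}{\rd t}\|v\|_{H^{-1}}^2+\epsilon\|\nabla v\|_{L^2}^2+\frac1\epsilon(f'(u)v,v)=0$, at which point the spectrum estimate and Gronwall give $\epsilon$-polynomial control of $\esssupT\|v\|_{H^{-1}}^2$ and of $\epsilon\int_0^T\|\nabla v\|_{L^2}^2$. Feeding $\esssupT\|u_t\|_{H^{-1}}=\esssupT\|\nabla w\|_{L^2}$ back into $\epsilon\Delta u=\frac1\epsilon f(u)-w$ and using the two-dimensional embedding $H^2\hookrightarrow L^\infty$ to absorb $f(u)=u^3-u$, elliptic regularity (here $u_0\in H^4$ and $\partial\Omega\in C^{2,1}$ enter) upgrades this to \eqref{eq2.12}, and differentiating the same identity gives $\epsilon\nabla\Delta u=\frac1\epsilon f'(u)\nabla u-\nabla w$, hence \eqref{eq2.13_add}. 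The $L^2$-level bound \eqref{eq2.15_add} comes from testing the differentiated equation with $v$ in $L^2$, producing $\frac12\frac{\rd}{\rd t}\|v\|_{L^2}^2+\epsilon\|\Delta v\|_{L^2}^2=\frac1\epsilon(f'(u)v,\Delta v)$; here one integrates by parts to expose the coercive structure and then controls the residual cubic terms through the already-secured $H^{-1}$- and $H^2$-bounds so that Gronwall remains polynomial. Tracking the input powers $J_\epsilon(u_0)\le C\epsilon^{-2\sigma_1}$ and the GA(3) bounds on $w|_{t=0}$ yields the stated exponents.

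The conditional estimates \eqref{eq2.18}--\eqref{eq2.19} require one further level. Testing the differentiated equation against $w_t$ gives $\frac\epsilon2\frac{\rd}{\rd t}\|\nabla v\|_{L^2}^2+\|\nabla w_t\|_{L^2}^2=-\frac1\epsilon(v_t,f'(u)v)$, which governs $\esssupT\|\nabla v\|_{L^2}^2$ together with $\epsilon\int_0^T\|\nabla\Delta v\|_{L^2}^2$ after splitting $\nabla w_t$, i.e. \eqref{eq2.18}; the hypothesis \eqref{eq2.17} is precisely what supplies the finite, $\epsilon$-polynomial initial value $\lim_{s\to0^+}\|\nabla u_t(s)\|_{L^2}$ that this Gronwall argument needs, reflecting that the data are only assumed in $H^4$. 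Estimate \eqref{eq2.19} is then algebraic: from $u_{tt}=\Delta w_t$ one has $\|u_{tt}\|_{H^{-1}}^2=\|\nabla w_t\|_{L^2}^2\le 2\epsilon^2\|\nabla\Delta v\|_{L^2}^2+\frac2{\epsilon^2}\|\nabla(f'(u)v)\|_{L^2}^2$, and integrating in time while invoking \eqref{eq2.18} produces the factor $\epsilon\,\rho_0(\epsilon)=\rho_1(\epsilon)$. I expect the genuine obstacle throughout to be the $\epsilon$-polynomial bookkeeping, not the testing itself: at each level the cubic nonlinearity must be tamed by the lower-order bounds and the spectrum estimate redeployed, since any step that falls back on $f'(u)\ge-1$ alone reintroduces the exponential dependence the theorem is designed to avoid.
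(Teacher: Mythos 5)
A preliminary remark: the paper does not prove Theorem \ref{prop2.1} at all --- it cites the proofs to \cite{feng2004error,feng2005numerical,feng2016analysis,li2015numerical} --- so the relevant comparison is with those proofs and with the paper's own proof of the companion Theorem \ref{thm20180609_1}, which continues the same ladder one rung higher. Your skeleton matches that architecture: the energy law and mass conservation arguments for \eqref{eq2.5} and \eqref{eq2.8} are exactly right, and your testing identities for the time-differentiated equation (in $H^{-1}$, in $L^2$, and against $w_t$) are all correct. The genuine gap is in what you call ``the single device that keeps all $\frac1\epsilon$-powers polynomial,'' namely the spectrum estimate of \cite{chen1994spectrum}. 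First, Lemma \ref{lem3.4} is not available under the hypotheses of Theorem \ref{prop2.1}: it requires $u_0$ to have a well-prepared interfacial profile attached to a smooth initial interface $\Gamma_0$, whereas the theorem assumes only GA (1)--(3), $u_0\in H^4(\Omega)$ and $\partial\Omega\in C^{2,1}$; building the proof on it proves a weaker statement. Second, the spectral quadratic form $\epsilon\|\nabla\psi\|_{L^2}^2+\frac1\epsilon(f'(u)\psi,\psi)$ appears only in your $H^{-1}$-level test; at the $L^2$ and $H^1$ levels the nonlinearity enters as $\frac1\epsilon(f'(u)u_t,\Delta u_t)$ and $-\frac1\epsilon(\nabla(f'(u)u_t),\nabla w_t)$, which are not of that shape, so the estimate cannot be ``redeployed'' there as you assert. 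Third, even at the $H^{-1}$ level, applying Lemma \ref{lem3.4} as stated consumes the entire dissipation $\epsilon\|\nabla u_t\|_{L^2}^2$, which you need intact for the subsequent rungs of the bootstrap.

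The actual mechanism in the cited proofs --- and visibly in this paper's proof of Theorem \ref{thm20180609_1}, which uses no spectral input --- is different: one keeps the crude bound on $f'$, interpolates the intermediate norm downward, e.g.\ $\|u_t\|_{L^2}^2\le \|u_t\|_{H^{-1}}\|\nabla u_t\|_{L^2}$ or $\|u_t\|_{L^2}^2\le \|u_t\|_{H^{-1}}^{4/3}\|\Delta u_t\|_{L^2}^{2/3}$, absorbs the top norm into the dissipation by Young's inequality, and then \emph{integrates in time} against the already-available bound $\int_0^T\|u_t\|_{H^{-1}}^2\,\rd s\le J_\epsilon(u_0)\le C\epsilon^{-2\sigma_1}$ from \eqref{eq2.5}. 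No Gronwall inequality with an $\epsilon^{-p}$ rate is ever invoked, so no exponential factor can arise; the additive exponents such as $2\sigma_1+\frac{13}{2}$ in \eqref{eq2.15_add} and the product structure of $\rho_0(\epsilon)$ are the fingerprint of exactly this bookkeeping. Your phrase ``Gronwall remains polynomial'' misidentifies the point: Gronwall with coefficient $C\epsilon^{-3}$ yields $\exp(CT\epsilon^{-3})$ regardless of the data, and the polynomial dependence comes from \emph{avoiding} Gronwall at every $\epsilon$-dependent rate, not from taming it with the spectrum. (The spectrum estimate is reserved in this paper, in the generalized form of Theorem \ref{thm3.7_add} with the reduced coefficient $\epsilon-\epsilon^4$, for the error analysis of $\theta^n$, where the integrated-energy trick is unavailable.) With the spectral step replaced by this interpolation-plus-time-integration device, the rest of your outline, including the $w_t$-test for \eqref{eq2.18} and the algebraic deduction of \eqref{eq2.19} from $u_{tt}=\Delta w_t$, goes through.
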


Besides, an extra a priori estimates of solution $u$ is needed in
this paper.
\begin{theorem}\label{thm20180609_1}
Under the assumptions of Theorem \ref{prop2.1} and if there exists
$\sigma_6>0$ such that
\begin{align}\label{eq20180606_5}
\|\Delta u_t(0)\|_{L^2}\le C\epsilon^{-\sigma_6},
\end{align}
then there hold
\begin{align}\label{eq20180606_1}
\esssupT\|\Delta u_t\|_{L^2}^2 + \epsilon\int_0^{T}\|\Delta^2
u_t\|_{L^2}^2\,\rd s &\leq C \rho_2(\epsilon),\\
\esssupT\epsilon\|\Delta u_t\|_{L^2}^2+\int_0^{T}\|u_{tt}\|_{L^2}^2\,\rd s
&\leq C \rho_3(\epsilon),\label{eq20180606_2}
\end{align}
where
\begin{align*}
\rho_2(\epsilon)&:=\epsilon^{-\max\{2\sigma_1+\frac{13}{2},2\sigma_3+\frac{7}{2},2\sigma_2+4,2\sigma_4\}
  - \max\{2\sigma_1+5, 2\sigma_3+2\} - 3} \\ 
& \qquad + \epsilon^{-\max\{\sigma_1+\frac52,\sigma_3+1\}-3}\rho_0(\epsilon) +
\epsilon^{-2\sigma_6},\\
\rho_3(\epsilon)&:=\epsilon\rho_2(\epsilon).
\end{align*}
\end{theorem}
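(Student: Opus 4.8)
The plan is to differentiate the Cahn--Hilliard equation \eqref{eq20170504_1} in time and run energy estimates on the resulting equation for $w:=u_t$, controlling every nonlinear contribution through the a priori time integrals already furnished by Theorem \ref{prop2.1}, so that no Gr\"onwall argument (which would force exponential dependence on $\epsilon^{-1}$) is ever invoked. Writing $f'(u)=3u^2-1$ and differentiating \eqref{eq20170504_1} gives
\begin{align*}
u_{tt}+\epsilon\Delta^2 u_t-\tfrac1\epsilon\Delta\big(f'(u)u_t\big)=0 ,
\end{align*}
and differentiating the Neumann data \eqref{eq20170504_2} in time yields $\partial_n u_t=\partial_n u_{tt}=0$; combining $\partial_n u=0$ with the flux condition first gives $\partial_n\Delta u=0$, hence $\partial_n\Delta u_t=0$ on $\partial\Omega$. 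Consequently all the integrations by parts below produce no boundary terms, and in particular $\int_\Omega u_{tt}\,\Delta^2 u_t\,\rd x=\frac12\frac{\rd}{\rd t}\|\Delta u_t\|_{L^2}^2$.

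To obtain \eqref{eq20180606_1} I would test the differentiated equation with $\Delta^2 u_t$, producing the identity
\begin{align*}
\tfrac12\tfrac{\rd}{\rd t}\|\Delta u_t\|_{L^2}^2+\epsilon\|\Delta^2 u_t\|_{L^2}^2
=\tfrac1\epsilon\big(\Delta(f'(u)u_t),\Delta^2 u_t\big).
\end{align*}
The right-hand side is the crux. Expanding $\Delta(f'(u)u_t)=(3u^2-1)\Delta u_t+12u\,\nabla u\cdot\nabla u_t+6(|\nabla u|^2+u\Delta u)u_t$, I would move the top-order piece onto $\nabla\Delta u_t$ by one integration by parts in space, turning $\frac1\epsilon\int(3u^2-1)\Delta u_t\,\Delta^2 u_t$ into $-\frac1\epsilon\int(3u^2-1)|\nabla\Delta u_t|^2$ plus a cross term, and then bound the remaining pieces by Young's inequality, absorbing a fraction of $\epsilon\|\Delta^2 u_t\|_{L^2}^2$ into the left-hand side. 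The point is that after integrating in $t$ over $[0,T]$ every surviving factor of $u_t$ sits inside a \emph{time integral} that is already controlled: $\int_0^T\|\Delta u_t\|_{L^2}^2$ and $\sup_t\|u_t\|_{L^2}^2$ by \eqref{eq2.15_add}, and $\sup_t\|\nabla u_t\|_{L^2}^2$ together with $\int_0^T\|\nabla\Delta u_t\|_{L^2}^2$ by \eqref{eq2.18}, while the coefficients $\|u\|_{L^\infty}$, $\|\nabla u\|_{L^\infty}$, $\|\Delta u\|_{L^4}$ are bounded in $t$ via the two-dimensional embeddings $H^2\hookrightarrow L^\infty$, $H^3\hookrightarrow W^{1,\infty}$ and the estimates \eqref{eq2.12}--\eqref{eq2.13_add}. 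Collecting the resulting powers of $\epsilon$ together with the initial term $\tfrac12\|\Delta u_t(0)\|_{L^2}^2\le C\epsilon^{-2\sigma_6}$ from \eqref{eq20180606_5} is what reproduces $\rho_2(\epsilon)$.

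For \eqref{eq20180606_2} I would instead test the differentiated equation with $u_{tt}$, which gives
\begin{align*}
\|u_{tt}\|_{L^2}^2+\tfrac\epsilon2\tfrac{\rd}{\rd t}\|\Delta u_t\|_{L^2}^2
&=\tfrac1\epsilon\big(\Delta(f'(u)u_t),u_{tt}\big)\\
&\le \tfrac12\|u_{tt}\|_{L^2}^2+\tfrac1{2\epsilon^2}\|\Delta(f'(u)u_t)\|_{L^2}^2,
\end{align*}
so that after absorbing $\tfrac12\|u_{tt}\|_{L^2}^2$ and integrating in time both $\int_0^T\|u_{tt}\|_{L^2}^2$ and $\esssupT\epsilon\|\Delta u_t\|_{L^2}^2$ are dominated by $\epsilon\|\Delta u_t(0)\|_{L^2}^2+\tfrac1{\epsilon^2}\int_0^T\|\Delta(f'(u)u_t)\|_{L^2}^2$. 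The first summand is $\le C\epsilon^{1-2\sigma_6}$, and for the second I would reuse the bound on $\|\Delta(f'(u)u_t)\|_{L^2}^2$ from the previous step, invoking \eqref{eq2.15_add} and the already established \eqref{eq20180606_1} for the $\Delta u_t$ contribution, and verify that the outcome is controlled by $\rho_3=\epsilon\rho_2$.

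The main obstacle, and the reason for proceeding in exactly this order, is to keep the nonlinear term polynomial in $\epsilon^{-1}$: a naive Gr\"onwall closure on $\frac{\rd}{\rd t}\|\Delta u_t\|_{L^2}^2$ would load a factor of order $\epsilon^{-3}\|u\|_{L^\infty}^4$ into the exponential and destroy the bound, so it is essential that each dangerous quantity be rerouted, through integration by parts in space, into one of the pre-established time integrals of Theorem \ref{prop2.1} and merely \emph{summed} rather than fed to Gr\"onwall. A preliminary technical point I would settle is regularity: the formal time-differentiation and the test functions $\Delta^2 u_t$ and $u_{tt}$ are justified by carrying out the estimates on a Galerkin (or time difference-quotient) approximation and passing to the limit, which is legitimate because \eqref{eq2.18}--\eqref{eq2.19} and the hypothesis \eqref{eq20180606_5} supply the requisite bounds on the approximations uniformly in the discretization parameter.
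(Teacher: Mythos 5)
Your proposal follows essentially the same route as the paper's proof: differentiate \eqref{eq20170504_1} in time to get $u_{tt}+\epsilon\Delta^2u_t-\frac{1}{\epsilon}\Delta(f'(u)u_t)=0$, test with $\Delta^2 u_t$ for \eqref{eq20180606_1} and with $u_{tt}$ for \eqref{eq20180606_2}, and control the nonlinear term through the product expansion of $\Delta(f'(u)u_t)$ and the a priori estimates \eqref{eq2.5}, \eqref{eq2.12}--\eqref{eq2.15_add}, \eqref{eq2.18}, with no Gr\"onwall step --- indeed the paper does not even need your spatial integration by parts on the leading piece, since it simply absorbs $\frac{\epsilon}{2}\int_0^T\|\Delta^2u_t\|_{L^2}^2\,\rd s$ by Young's inequality at the cost $C\epsilon^{-3}\int_0^T\|\Delta(f'(u)u_t)\|_{L^2}^2\,\rd s$, which is then bounded once and reused for both estimates. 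One caveat: to reproduce the stated $\rho_2(\epsilon)$ exactly you must bound $\|\nabla u\|_{L^\infty}$ by the Gagliardo--Nirenberg interpolation \eqref{eq20180801_3}, giving $\|\nabla u\|_{L^\infty}\le C\epsilon^{-\frac12\max\{\sigma_1+\frac52,\sigma_3+1\}}$, rather than by the plain embedding $H^3\hookrightarrow W^{1,\infty}$ you invoke, which loses the square root and would degrade the middle term of $\rho_2(\epsilon)$ to $\epsilon^{-2\max\{\sigma_1+\frac52,\sigma_3+1\}-3}\rho_0(\epsilon)$ (still polynomial in $\frac{1}{\epsilon}$, but weaker than claimed).
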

\begin{proof} 
Using the Gagliardo-Nirenberg inequalities \cite{adams2003sobolev}
in two-dimensional space, we have 
\begin{align}\label{eq20180801_3}
\|\nabla u\|_{L^{\infty}}\leq C\bigg(\|\nabla\Delta
u\|_{L^2}^{\frac12}
\|u\|_{L^{\infty}}^{\frac12}+\|u\|_{L^{\infty}}\bigg)\le
C \epsilon^{-\frac12 \max\{\sigma_1+\frac52, \sigma_3 + 1\}}.
\end{align}
Since $f'(u) = 3u^2 - 1$, using Sobolev embedding
theorem \cite{adams2003sobolev}, \eqref{eq2.5}, \eqref{eq2.12}, \eqref{eq2.13_add}, \eqref{eq2.15_add} and \eqref{eq2.18}, we have 
\begin{align} \label{eq20180731_1}
&~\quad \int_0^T \|\Delta(f'(u)u_t)\|_{L^2}^2 \,\rd s \\
&= \int_0^T \|6uu_t \Delta u + 12 u\nabla u \cdot \nabla u_t +
6u_t\nabla u \cdot \nabla u + (3u^2 -1) \Delta u_t\|_{L^2}^2 \,\rd s \notag
\\
&\le C\int_0^T\|\Delta u\|_{L^2}^2 \|u_t\|_{L^\infty}^2\, \rd s 
+ C\int_0^T \|\nabla u\|_{L^\infty}^2 \|\nabla u_t\|_{L^2}^2 \,\rd s \notag \\
& ~\quad + C\int_0^T \|\nabla u\|_{L^\infty}^{4} \|u_t\|_{L^2}^2 \,\rd s 
+ C\int_0^T \|\Delta u_t\|_{L^2}^2\, \rd s \notag \\
&\le C\|\Delta u\|_{L^\infty(L^2)}^2 \int_0^T\|u_t\|_{H^2}^2\, \rd s 
+ C\|\nabla u_t\|_{L^\infty(L^2)}^2 \|\nabla
u\|_{L^\infty(L^\infty)}^2 \notag \\
& ~\quad + C\|\nabla u\|_{L^\infty(L^\infty)}^{4}
\|u_t\|_{L^\infty(L^2)}^2 
+ C\int_0^T \|\Delta u_t\|_{L^2}^2\,\rd s \notag \\
&\le C \epsilon^{-\max\{2\sigma_1+\frac{13}{2},2\sigma_3+\frac{7}{2},
  2\sigma_2+4,2\sigma_4\} - \max\{2\sigma_1+5, 2\sigma_3+2\}-1} \notag \\ 
&~\quad + C\epsilon^{-\max\{\sigma_1 + \frac52,\sigma_3+1\}}\rho_0(\epsilon)
  \notag \\
& ~\quad + C \epsilon^{-\max\{2\sigma_1+\frac{13}{2},2\sigma_3+\frac{7}{2},
  2\sigma_2+4,2\sigma_4\} - \max\{2\sigma_1+5, 2\sigma_3 + 2\}} \notag \\
& ~\quad + C \epsilon^{-\max\{2\sigma_1+\frac{13}{2},2\sigma_3+\frac{7}{2},
  2\sigma_2+4,2\sigma_4\} - 1} \notag \\
&\le C \epsilon^{-\max\{2\sigma_1+\frac{13}{2},2\sigma_3+\frac{7}{2},
  2\sigma_2+4,2\sigma_4\} - \max\{2\sigma_1+5,
  2\sigma_3+2\}} \notag \\ 
&~\quad 
+ C\epsilon^{-\max\{\sigma_1 + \frac52, \sigma_3+1\}}\rho_0(\epsilon)
  \notag.
\end{align}

Taking the derivative with respect to $t$ on both sides of
\eqref{eq20170504_1}, we get
\begin{align}\label{eq20180606_3}
u_{tt}+\epsilon\Delta^2u_t-\frac{1}{\epsilon}\Delta(f'(u)u_t)=0.
\end{align}
Testing \eqref{eq20180606_3} with $\Delta^2u_t$, and taking the
integral over $(0,T)$, we obtain
\begin{align}\label{eq20180606_4}
&~\quad\frac{1}{2}\|\Delta
u_t(T)\|_{L^2}^2+\epsilon\int_0^{T}\|\Delta^2u_t\|_{L^2}^2\,\rd s \\ 
& =\frac{1}{\epsilon}\int_0^{T}(\Delta(f'(u)u_t),\Delta^2 u_t)\, \rd s +
\frac{1}{2}\|\Delta u_t(0)\|_{L^2}^2 \notag \\
&\le\frac{C}{\epsilon^3}\int_0^{T} \|\Delta(f'(u) u_t)\|_{L^2}^2\, \rd s 
+\frac{\epsilon}{2}\int_0^{T}\|\Delta^2u_t\|_{L^2}^2\,\rd s 
+C\epsilon^{-2\sigma_6}\notag.
\end{align}
Then \eqref{eq20180606_1} is obtained by \eqref{eq20180731_1}.

Next we bound \eqref{eq20180606_2}. Testing \eqref{eq20180606_3} with
$u_{tt}$, taking the integral over $(0,T)$, and using
\eqref{eq20180606_4}, we obtain
\begin{align}\label{eq20180606_8}
&~\quad \int_0^{T}\|u_{tt}\|_{L^2}^2\,\rd s + \frac{\epsilon}{2}\|\Delta
u_t(T)\|_{L^2}^2 \\ 
&\le \frac{\epsilon}{2}\|\Delta u_t(0)\|_{L^2}^2 +
\frac{C}{\epsilon^2}\int_0^{T} \|\Delta(f'(u)u_t)\|_{L^2}^2\,\rd s
+ \frac{1}{2}\int_0^{T}\|u_{tt}\|_{L^2}^2\,\rd s \notag.
\end{align}
Then \eqref{eq20180606_2} is obtained by \eqref{eq20180731_1}.
\end{proof}

The next lemma gives an $\epsilon$-independent lower bound for the
principal eigenvalue of the linearized Cahn-Hilliard operator
$\mathcal{L}_{CH}$ defined below. The proof of this lemma can be found
in \cite{chen1994spectrum}. 
\begin{lemma}\label{lem3.4}
Suppose that GA (1)--(3) hold. Given a smooth initial curve/surface
$\Gamma_0$, let $u_0$ be a smooth function satisfying $\Gamma_0 =
\{x\in\Omega; u_0(x)=0\}$ and some profile described in \cite{chen1994spectrum}.
Let $u$ be the solution to problem
\eqref{eq20170504_1}--\eqref{eq20170504_5}.  Define $\mathcal{L}_{CH}$
as
\begin{equation*}
\mathcal{L}_{CH} := \Delta\left(\eps\Delta-\frac{1}{\eps}f'(u)I\right).
\end{equation*}
Then there exists $0<\epsilon_0\ll 1$ and a positive constant $C_0$ such
that the principle eigenvalue of the linearized Cahn-Hilliard operator
$\mathcal{L}_{CH}$ satisfies 
\begin{equation*}
\lambda_{CH}:=\mathop{\inf}_{\substack{0\neq\psi\in H^1(\Omega)\\ \Delta w=\psi}}
\limits\frac{\epsilon\|\nabla\psi\|_{L^2}^2+\frac{1}{\epsilon}(f'(u)\psi,\psi)}{\|\nabla w\|_{L^2}^2}\geq -C_0
\end{equation*}
for $t\in [0,T]$ and $\eps\in (0,\eps_0)$.
\end{lemma}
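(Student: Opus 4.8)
The statement is the Cahn--Hilliard spectral estimate, and the essential feature is that the Rayleigh quotient carries the $H^{-1}$-type denominator $\|\nabla w\|_{L^2}^2=\|\psi\|_{H^{-1}}^2$ (via $\Delta w=\psi$) rather than $\|\psi\|_{L^2}^2$; consequently it cannot be deduced from the simpler Allen--Cahn estimate, since $\|\psi\|_{H^{-1}}\le C\|\psi\|_{L^2}$ goes the wrong way. The plan is to exploit the prescribed interface profile directly and reduce the two-dimensional quadratic form
\[
Q(\psi):=\epsilon\|\nabla\psi\|_{L^2}^2+\tfrac1\epsilon(f'(u)\psi,\psi)
\]
to a one-dimensional normal problem along $\Gamma_t=\{u=0\}$, using that $u$ is close to the standard profile $u\approx\theta(d/\epsilon)$ with $\theta(z)=\tanh(z/\sqrt2)$ and $d$ the signed distance to $\Gamma_t$. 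Introducing curvilinear coordinates $(r,s)$ (normal distance $r$, arc length $s$) in a tubular neighborhood of $\Gamma_t$ and a smooth cutoff separating this tube from the bulk, I would split $\psi=\psi_{\rm in}+\psi_{\rm out}$.

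In the bulk $f'(u)$ is bounded below away from zero by $f'(\pm1)=2$ up to $O(\epsilon)$ errors, so $\tfrac1\epsilon(f'(u)\psi_{\rm out},\psi_{\rm out})\ge \tfrac{c}{\epsilon}\|\psi_{\rm out}\|_{L^2}^2$ is large and positive and contributes no negativity. The heart of the matter is the inner part. Freezing $s$ and rescaling $z=r/\epsilon$, the normal operator is $\mathcal L_0=-\partial_{zz}+f'(\theta(z))$ on the line, whose spectrum is known: it has a simple eigenvalue $0$ with positive ground state $\theta'(z)$, and a gap $\int_{\mathbf R}(|\phi'|^2+f'(\theta)\phi^2)\,\rd z\ge\lambda_1\int_{\mathbf R}\phi^2\,\rd z$ for all $\phi\perp\theta'$. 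For each $s$ I would decompose the normal slice of $\psi_{\rm in}$ as $a(s)\theta'(r/\epsilon)+\psi_\perp$ with $\psi_\perp$ orthogonal to $\theta'$ in the normal variable. For the orthogonal component the gap yields a positive contribution of size $\sim\epsilon^{-1}\|\psi_\perp\|_{L^2}^2$ after the $\epsilon$-scaling, which absorbs the tangential-derivative and curvature cross terms.

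The only possibly negative contribution comes from the kernel amplitude $a(s)$, and this is where the $H^{-1}$ denominator is essential. A direct computation shows the leading normal contribution vanishes because $\mathcal L_0\theta'=0$, namely $\int_{\mathbf R}(\theta''^2+f'(\theta)\theta'^2)\,\rd z=\int_{\mathbf R}\theta'\,\mathcal L_0\theta'\,\rd z=0$, so the surviving negative terms come only from the interface curvature and from the deviation of $u$ from the exact profile, which are of lower order in $\epsilon$, while the tangential derivatives contribute $\epsilon\|\partial_s(a\,\theta')\|_{L^2}^2\ge0$. The crux is then the quantitative comparison between this residual curvature term and the $H^{-1}$ norm $\|\nabla w\|_{L^2}^2$ of the concentrated kernel mode, combined with the observation that genuinely oscillatory components of $\psi$ are controlled by the large gradient term $\epsilon\|\nabla\psi\|_{L^2}^2$. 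Assembling the positive bulk and orthogonal contributions with this bound gives $Q(\psi)\ge -C_0\|\nabla w\|_{L^2}^2$ for $\epsilon<\epsilon_0$, which is the claim.

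I expect the last comparison to be the technically hardest step: the kernel mode $a(s)\theta'(r/\epsilon)$ behaves in the limit like a single layer on $\Gamma_t$, so its $H^{-1}$ norm scales like a negative-order boundary norm of $a$, and one must verify that the residual negative term is dominated by it uniformly in the tangential frequency of $a$. The careful bookkeeping of the curvature corrections and this frequency-by-frequency comparison are exactly the estimates carried out in \cite{chen1994spectrum}, which I would follow; the remaining bulk and spectral-gap arguments above are routine once the profile structure is in place.
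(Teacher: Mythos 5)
The paper gives no proof of this lemma at all: it states the result and refers the reader to \cite{chen1994spectrum}, and your sketch---the inner/outer splitting about the transition layer, the spectral gap of the one-dimensional operator $-\partial_{zz}+f'(\theta)$ with kernel $\theta'$, and the $H^{-1}$-denominator comparison for the near-kernel mode---is an accurate outline of precisely that reference's argument, to which you also (correctly) defer the hardest curvature and frequency-by-frequency estimates. So your proposal is essentially the same approach the paper relies on.
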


\section{Fully Discrete Approximation}\label{sec3}
In this section, the backward Euler is used for time stepping,
and the Morley finite element discretization is used for space
discretization. 

\subsection{Morley finite element space}

Define the Morley finite element spaces $S^h$ below
\cite{brenner1999convergence,brenner2013morley,elliott1989nonconforming}:
\begin{align*}
S^h := \{& v_h\in L^{\infty}(\D): v_h\in P_2(K), v_h ~\text{is
continuous at the vertices of all triangles,} \\ 
&\frac{\partial v_h}{\partial n} \text{ is continuous at the midpoints
of interelement edges of triangles} \}.
\end{align*}

We use the following notation
\begin{equation*}
H^j_E(\Omega):=\{v\in H^j(\Omega): \frac{\partial v}{\partial
  n}=0~\text{on}~\partial\Omega\}\qquad j=1, 2, 3.
\end{equation*}
Corresponding to $H^j_E(\Omega)$, define $S^h_E$ as a subspace of
$S^h$ below: 
\begin{equation*}
S^h_E := \{v_h\in S^h: \frac{\partial v_h}{\partial n}=0 \text{ at the
midpoints of the edges on } \partial\Omega\}.
\end{equation*}
We also define $\mathring{H}_E^j(\Omega) = H_E^j(\Omega) \cap
L_0^2(\Omega), j=1,2,3$, and $\mathring{S}^h_E = S^h_E \cap
L_0^2(\Omega)$, where $L_0^2(\Omega)$ denotes the set of mean zero functions.

The enriching operator $\E_h$ is restated
\cite{brenner1996two,brenner1999convergence,brenner2013morley}. Let
$\widetilde{S}_E^h$ be the Hsieh-Clough-Tocher macro element space,
which is an enriched space of the Morley finite element space $S_E^h$.
Let $p$ and $m$ be the internal vertices and midpoints of triangles
$\T_h$. Define $\E_h: S_E^h\rightarrow \widetilde{S}_E^h$ by
\begin{align*}
(\E_h v)(p) &= v(p),\\
\frac{\p (\E_h v)}{\p n}(m) &= \frac{\p v}{\p n}(m),\\
(\p^{\beta}(\E_h v))(p) &= \text{average of } (\p^{\beta}v_i)(p)\qquad |\beta|=1,
\end{align*}
where $v_i=v|_{T_i}$ and triangle $T_i$ contains $p$ as a vertex.

Define the interpolation operator $I_h: H^2_E(\Omega)\rightarrow
S_E^h$ such that
\begin{align*}
(I_h v)(p)&=v(p),\\ 
\frac{\p (I_h v)}{\p n}(m)&=\frac{1}{|e|}\int_e\frac{\p v}{\p n}\,\rd S,
\end{align*}
where $p$ ranges over the internal vertices of all the triangles $T$,
and $m$ ranges over the midpoints of all the edges $e$.  It can be
proved that
\cite{brenner1996two,brenner1999convergence,brenner2013morley,elliott1989nonconforming}
\begin{alignat}{2}\label{eq20170812_6}
|v-I_hv|_{j,p,K}&\le Ch^{3-j}|v|_{3,p,K}\qquad&&\forall
K\in\mathcal{T}_h,\quad\forall v\in H^3(K),\quad j=0,1,2,\\
\|\E_h v-v\|_{j,2,h}&\le Ch^{2-j}|v|_{2,2,h}\quad&&\forall v\in
S_E^h,\quad j=0,1,2.\label{eq20171006_1}
\end{alignat}

Notice that $\E_h$ and $I_h$ cannot preserve the mean zero functions.
Let $\mathring{\widetilde S_E^h}:= \widetilde{S}_E^h \cap
L_0^2(\Omega)$.  Define $\mathring{\E_h}:\mathring{S}_E^h \mapsto
\mathring{\widetilde S_E^h}$ such that 
\begin{align} \label{eq20180524_1} 
\mathring{\E_h}v = \E_h v - \frac{1}{|\Omega|}\int_{\Omega} \E_h v \,\rd x.
\end{align} 
Using \eqref{eq20171006_1}, we have 
$$ 
\int_\Omega \E_h v \,\rd x = (\E_h v - v, 1) \leq |\Omega|^{1/2}\|\E_h v -
v\|_{0,2} \leq Ch^2|v|_{2,2,h} \qquad \forall v \in \mathring{S}_E^h.
$$ 
Then
\begin{align} \label{eq20180524_2}
\|\mathring{\E_h} v-v\|_{j,2,h}&\le Ch^{2-j}|v|_{2,2,h}\qquad\forall v\in
\mathring{S}_E^h,\quad j=0,1,2.
\end{align}

Finally the following spaces are needed
\begin{alignat*}{2}
&H^{3,h}(\Omega)=S^h\oplus  H^3(\Omega), &&\qquad H_E^{3,h}(\Omega)=S_E^h\oplus  H_E^3(\Omega),\\
&H^{2,h}(\Omega)=S^h\oplus  H^2(\Omega), &&\qquad H_E^{2,h}(\Omega)=S_E^h\oplus  H_E^2(\Omega),\\
&H^{1,h}(\Omega)=S^h\oplus  H^1(\Omega), &&\qquad H_E^{1,h}(\Omega)=S_E^h\oplus  H_E^1(\Omega),
\end{alignat*}
where, for instance,
\begin{align*}
S_E^h\oplus  H_E^3(\Omega):=\{u+v: u\in S_E^h\ \ \text{and}\ \ v\in H_E^3(\Omega)\}.
\end{align*}

\subsection{Formulation}
The weak form of \eqref{eq20170504_1}--\eqref{eq20170504_5} is to seek
$u(\cdot,t)\in H^2_E(\D)$ such that
\begin{align}\label{eq20180211_1}
(u_t,v)+\epsilon a(u,v) +\frac{1}{\epsilon}(\nabla f(u), \nabla
    v)&= 0\quad\forall v\in H_E^2(\D),\\
u(\cdot,0)&=u_0\in H_E^2(\D),\label{eq20180211_2}
\end{align}
where the bilinear form $a(\cdot,\cdot)$ is defined as
\begin{align}\label{eq20170504_8}
a(u,v):=\int_{\D}\Delta u\Delta v+\bigl(\frac{\partial^2u}{\partial x\partial y}\frac{\partial^2v}{\partial x\partial y}-\frac12\frac{\partial^2u}{\partial x^2}\frac{\partial^2v}{\partial y^2}-\frac12\frac{\partial^2u}{\partial y^2}\frac{\partial^2v}{\partial x^2}\bigr)\,\rd x \rd y
\end{align}
with Poisson's ratio $\frac12$.


Next define the discrete bilinear form
\begin{align}\label{eq20170504_9}
a_h(u,v)&:=\sum_{K\in\mathcal{T}_h}\int_K\Delta u\Delta v+\bigl(\frac{\partial^2u}{\partial x\partial y}\frac{\partial^2v}{\partial x\partial y}-\frac12\frac{\partial^2u}{\partial x^2}\frac{\partial^2v}{\partial y^2}-\frac12\frac{\partial^2u}{\partial y^2}\frac{\partial^2v}{\partial x^2}\bigr)
\,\rd x\rd y.
\end{align}

Based on the bilinear form \eqref{eq20170504_9}, a fully discrete Galerkin method is to seek $u_h^n\in S^h_E$ such that 
\begin{align}\label{eq20170504_11}
(d_tu_h^{n},v_h)+\epsilon a_h(u_h^{n},v_h)+\frac{1}{\epsilon}(\nabla f(u_h^{n}),\nabla v_h)_h&=0\quad\forall v_h\in S^h_E,\\
u_h^0&=u_0^h\in S^h_E,\label{eq20170504_12}
\end{align}
where the difference operator $d_tu_h^{n} :=
\frac{u_h^{n}-u_h^{n-1}}{k}$ and $u_0^h := P_hu(t_0)$, where the
operator $P_h$ is defined below. 

\subsection{Elliptic operator $P_h$}
We define
\begin{align*}
R:=\bigl\{v\in H_E^2(\D): \Delta v\in H_E^2(\D)\bigr\}.
\end{align*}
Then $\forall v\in R$, define the elliptic operator $P_h$ (cf.
\cite{elliott1989nonconforming}) by seeking $P_hv\in S_E^h$ such that
\begin{align}\label{eq20170504_14}
\tilde b_h(P_hv,w):=(\epsilon\Delta^2v-\frac{1}{\epsilon}\nabla \cdot
(f'(u)\nabla v)+\alpha v,w)\qquad\forall w\in S_E^h,
\end{align}
where
\begin{align}\label{eq20170504_15}
\tilde b_h(v,w):=\epsilon a_h(v,w)+\frac{1}{\epsilon}(f'(u)\nabla
    v,\nabla w)_h+\alpha(v,w),
\end{align}
and $\alpha$ should be chosen as $\alpha = \alpha_0 \epsilon^{-3}$ to
guarantee the coercivity of $\tilde{b}_h(\cdot,\cdot)$.  More
precisely, first we cite some lemmas in
\cite{elliott1989nonconforming}, which will be used in this paper.

\begin{lemma}[Lemma 2.3 in \cite{elliott1989nonconforming}]
\label{lem20180817_1}
Let $w,z \in H_E^{2,h}(\Omega)$, then 
$$
\left| \sum_{K \in \mathcal{T}_h} \int_{\partial K} \frac{\partial
w}{\partial n}z \,\rd S \right| 
\leq Ch(h\|w\|_{2,2,h}\|z\|_{2,2,h} + \|w\|_{1,2,h}\|z\|_{2,2,h} +
\|w\|_{2,2,h}\|z\|_{1,2,h}).
$$
\end{lemma}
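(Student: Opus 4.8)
The plan is to reorganize the element-boundary sum into a sum over edges and to extract the missing powers of $h$ from the two defining continuity properties of the Morley space: continuity of $z$ at the vertices and of $\pa w/\pa n$ at the edge midpoints. First I would write
\begin{align*}
\sum_{K\in\mct}\int_{\pa K}\frac{\pa w}{\pa n}z\,\rd S
= \sum_{e\ \mathrm{interior}}\int_e \llbracket \tfrac{\pa w}{\pa n}\,z\rrbracket\,\rd S
+ \sum_{e\subset\pa\Omega}\int_e \frac{\pa w}{\pa n}z\,\rd S,
\end{align*}
where on an interior edge $e$ shared by $K_\pm$ (with a fixed unit normal $n_e$) the jump $\llbracket\cdot\rrbracket$ is $(\cdot)_+-(\cdot)_-$. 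Since the $H^2_E(\Omega)$ summand of any $H^{2,h}_E$ function has single-valued traces of itself and of its normal derivative, every jump below is produced solely by the Morley ($S^h_E$) component, which is piecewise $P_2$; in particular the jumping part has vanishing third-order element seminorms. On each edge I split $\llbracket \frac{\pa w}{\pa n}z\rrbracket = \{\tfrac{\pa w}{\pa n}\}\llbracket z\rrbracket + \llbracket\tfrac{\pa w}{\pa n}\rrbracket\{z\}$ into average--jump products (Term I and Term II), where $\{\cdot\}$ denotes the average across $e$.

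For Term II, the jump $\llbracket\frac{\pa w}{\pa n}\rrbracket$ is affine on $e$ and vanishes at the midpoint $m_e$, so $\int_e\llbracket\frac{\pa w}{\pa n}\rrbracket\,\rd S=|e|\,\llbracket\frac{\pa w}{\pa n}\rrbracket(m_e)=0$; subtracting the edge mean $\overline{\{z\}}$ I bound Term II $=\int_e\llbracket\frac{\pa w}{\pa n}\rrbracket(\{z\}-\overline{\{z\}})\,\rd S$ by $\|\llbracket\frac{\pa w}{\pa n}\rrbracket\|_{L^2(e)}\cdot C|e|\,\|\pa_t\{z\}\|_{L^2(e)}$ via the zero-mean Poincar\'e inequality on $e$. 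Because $\llbracket\frac{\pa w}{\pa n}\rrbracket$ vanishes at $m_e$ and its tangential derivative is a (constant) second derivative, a Poincar\'e/trace step gives $\|\llbracket\frac{\pa w}{\pa n}\rrbracket\|_{L^2(e)}\le C|e|h^{-1/2}|w|_{2,2,\omega_e}$, while the plain trace inequality on the patch $\omega_e$ gives $\|\pa_t\{z\}\|_{L^2(e)}\le C(h^{-1/2}|z|_{1,2,\omega_e}+h^{1/2}|z|_{2,2,\omega_e})$. With $|e|\sim h$ this yields the edge contribution $C(h|w|_{2,2,\omega_e}|z|_{1,2,\omega_e}+h^2|w|_{2,2,\omega_e}|z|_{2,2,\omega_e})$.

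The main obstacle is Term I, and it explains why a direct estimate fails: the average $\{\frac{\pa w}{\pa n}\}$ inevitably carries the dangerous trace scaling $\|\{\frac{\pa w}{\pa n}\}\|_{L^2(e)}\le C(h^{-1/2}|w|_{1,2,\omega_e}+h^{1/2}|w|_{2,2,\omega_e})$, so the naive bound $\|\llbracket z\rrbracket\|_{L^2(e)}\le Ch^{1/2}|z|_{1,2,\omega_e}$ would generate a forbidden $\|w\|_{1,2,h}\|z\|_{1,2,h}$ term with no compensating power of $h$. The remedy is to use that $\llbracket z\rrbracket$ is $P_2$ and vanishes at \emph{both} vertices of $e$: applying Poincar\'e once (vanishing at the endpoints) and then again to $\pa_t\llbracket z\rrbracket$ (which has zero mean, since $\llbracket z\rrbracket$ vanishes at both endpoints) reduces the control to the second tangential derivative, giving the sharp $\|\llbracket z\rrbracket\|_{L^2(e)}\le C|e|^2 h^{-1/2}|z|_{2,2,\omega_e}\sim Ch^{3/2}|z|_{2,2,\omega_e}$. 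Pairing this with the trace bound on $\{\frac{\pa w}{\pa n}\}$ yields the edge contribution $C(h|w|_{1,2,\omega_e}|z|_{2,2,\omega_e}+h^2|w|_{2,2,\omega_e}|z|_{2,2,\omega_e})$, with the harmful cross term eliminated. Boundary edges are handled identically but more easily, since $\pa w/\pa n$ vanishes at midpoints of boundary edges for $S^h_E$ (and on all of $\pa\Omega$ for the $H^2_E$ part), so the one-sided integral again annihilates constants. Finally I sum over all edges, apply the discrete Cauchy--Schwarz inequality to pass from the patchwise seminorms $|w|_{\cdot,2,\omega_e}$, $|z|_{\cdot,2,\omega_e}$ to the global broken norms $\|w\|_{1,2,h},\|w\|_{2,2,h},\|z\|_{1,2,h},\|z\|_{2,2,h}$, and collect the three resulting products into $Ch(h\|w\|_{2,2,h}\|z\|_{2,2,h}+\|w\|_{1,2,h}\|z\|_{2,2,h}+\|w\|_{2,2,h}\|z\|_{1,2,h})$, which is the claimed estimate.
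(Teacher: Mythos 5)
The paper never proves this lemma: it is quoted verbatim, with attribution, from Elliott and French \cite{elliott1989nonconforming}, so there is no in-paper argument to compare against. Your reconstruction follows the standard Morley consistency-estimate route (edge-wise reorganization, average--jump splitting, the midpoint cancellation for $\llbracket \partial w/\partial n\rrbracket$, the vertex cancellation for $\llbracket z\rrbracket$, scaled trace/Poincar\'e inequalities, bounded patch overlap), which is indeed the mechanism behind the cited result, and your diagnosis of why the naive estimate fails --- the uncancelled $\|w\|_{1,2,h}\|z\|_{1,2,h}$ term produced by pairing the trace bound for $\{\partial w/\partial n\}$ with the crude jump bound for $\llbracket z\rrbracket$ --- is exactly right. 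All of the per-edge inequalities you assert are true as stated.

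There is, however, a genuine gap in how you \emph{justify} the two refined jump bounds. For a genuinely mixed $z = z_h + z_c$ with $z_h \in S^h_E$ and $z_c \in H^2_E(\Omega)$, your chain ``Poincar\'e, then Poincar\'e again down to the second tangential derivative'' only makes sense for the Morley component: $\partial_t^2 z_c$ has no $L^2(e)$ trace (and likewise $\partial_t\partial_n w_c$ in your Term II estimate), so what your argument actually delivers is $\|\llbracket z\rrbracket\|_{L^2(e)} \le Ch^{3/2}|z_h|_{2,2,\omega_e}$ and $\|\llbracket\partial w/\partial n\rrbracket\|_{L^2(e)} \le Ch^{1/2}|w_h|_{2,2,\omega_e}$, with seminorms of the \emph{components}. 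Since $S^h_E \oplus H^2_E(\Omega)$ is defined in this paper merely as the sum of the two spaces, the decomposition is neither unique nor norm-stable --- nothing prevents $|z_h|_{2,2,h} \gg \|z\|_{2,2,h}$ through cancellation against $z_c$ --- yet your final display silently replaces $|z_h|,|w_h|$ by the full-function seminorms that appear in the lemma. The repair is easy and decomposition-free. For $w$: midpoint continuity gives $P^0_e\llbracket\partial w/\partial n\rrbracket = 0$ ($P^0_e$ the edge-mean), so $\llbracket\partial w/\partial n\rrbracket = (\partial_n w_+ - P^0_e\partial_n w_+) - (\partial_n w_- - P^0_e\partial_n w_-)$, and the per-side oscillation--trace estimate $\|\partial_n w_\pm - P^0_e\partial_n w_\pm\|_{L^2(e)} \le Ch^{1/2}|w|_{2,2,K_\pm}$ gives your bound with the full seminorm. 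For $z$: compare both one-sided traces with the \emph{single-valued} linear interpolant $I^1_e z$ at the two endpoints of $e$ (well defined because both summands of $z$ are continuous at vertices), so that $\llbracket z\rrbracket = (z_+ - I^1_e z) - (z_- - I^1_e z)$, and Bramble--Hilbert plus a scaled trace estimate give $\|z_\pm - I^1_e z\|_{L^2(e)} \le Ch^{3/2}|z|_{2,2,K_\pm}$, again with the full seminorm. With these two substitutions your assembly --- including the boundary edges and the patch-overlap Cauchy--Schwarz step --- goes through verbatim and yields the stated estimate.
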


\begin{lemma}[Lemma 2.5 in \cite{elliott1989nonconforming}]
\label{lem20180817_2}
Let $z\in H^{2,h}(\Omega)$ and $w\in H_E^2(\Omega) \cap H^3(\Omega)$,
and define $B_h(w,z)$ by 
$$ 
B_h(w,z) = \sum_{K\in \mathcal{T}_h} \int_{\partial K} \left( 
\Delta w \frac{\partial z}{\partial n} + \frac{1}{2} \frac{\partial^2
w}{\partial n \partial s} - \frac{1}{2}\frac{\partial^2 w}{\partial
s^2}\frac{\partial z}{\partial n}
\right)\,\rd S,
$$ 
then we have 
\begin{equation} \label{eq20180817_2} 
|B_h(w,z)| \leq Ch |w|_{3,2,h}|z|_{2,2,h}.
\end{equation} 
\end{lemma}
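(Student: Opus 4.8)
The plan is to recognize $B_h(w,z)$ as the boundary residual produced when the Morley bilinear form $a_h(w,\cdot)$ is integrated by parts against the smooth function $w$, and then to exploit the defining continuity properties of the Morley element together with the extra smoothness $w\in H^3$. First I would reduce the sum over elements to a sum over edges. Since $w\in H_E^2(\Omega)\cap H^3(\Omega)$, each coefficient appearing in $B_h$, namely $\Delta w$, $\partial^2 w/\partial n\partial s$ and $\partial^2 w/\partial s^2$, has a single-valued trace on every interior edge $e\in\mce$; collecting the contributions of the two elements sharing $e$ (whose outward normals are opposite) therefore pairs this single-valued $w$-coefficient with the jump of the corresponding trace of $z$, i.e. $\llbracket\partial z/\partial n\rrbracket_e$ and $\llbracket\partial z/\partial s\rrbracket_e$. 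On boundary edges I would use $\partial w/\partial n=0$ on $\partial\Omega$ (hence also $\partial^2 w/\partial n\partial s=0$ there) to discard the mixed term and to control the remaining contributions.

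The key algebraic step is an orthogonality coming from the Morley degrees of freedom. Writing $z=z_c+z_h$ with $z_c\in H^2(\Omega)$ and $z_h\in S^h$ (the direct-sum decomposition defining $H^{2,h}(\Omega)$), the jumps are carried entirely by the Morley part $z_h$, because $\nabla z_c$ has single-valued traces. Continuity of $\partial z_h/\partial n$ at edge midpoints forces $\int_e\llbracket\partial z/\partial n\rrbracket_e\,\rd S=0$ (a linear function on $e$ vanishing at its midpoint has zero mean), and continuity of $z_h$ at the vertices forces $\int_e\llbracket\partial z/\partial s\rrbracket_e\,\rd S=0$ (the tangential jump is the arclength derivative of $\llbracket z\rrbracket_e$, which vanishes at both endpoints of $e$). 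Consequently, on each edge I may subtract from the $w$-coefficient $g$ its edge average $\bar g$ without changing the integral, so that each edge term equals $\int_e(g-\bar g)\,\llbracket\,\cdot\,\rrbracket_e\,\rd S$.

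It then remains to estimate the two factors. For the $w$-factor, a scaled trace inequality followed by Poincar\'e's inequality on the deviation from the mean gives $\|g-\bar g\|_{L^2(e)}\le Ch^{1/2}|w|_{3,2,K}$, since $g$ is a second derivative of $w$ and its gradient is a third derivative. The delicate factor is the jump $\llbracket\partial z/\partial n\rrbracket_e$: a plain trace bound would only yield $\|\llbracket\partial z/\partial n\rrbracket_e\|_{L^2(e)}\le C(h^{-1/2}\|\nabla z\|_{L^2(\omega_e)}+h^{1/2}|z|_{2,2,\omega_e})$, whose first term is fatal. The main obstacle, therefore, is to upgrade this to $\|\llbracket\partial z/\partial n\rrbracket_e\|_{L^2(e)}\le Ch^{1/2}|z|_{2,2,\omega_e}$ by genuinely using the midpoint continuity: since the jump is a polynomial on $e$ vanishing at its midpoint, its $L^2(e)$ norm is controlled by its tangential derivative, i.e. by the jump of $\partial^2 z/\partial n\partial s$, to which an inverse/trace estimate on the piecewise second derivatives supplies the extra power of $h$. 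An entirely analogous argument handles the tangential jump through the vertex continuity. Inserting both estimates and applying the Cauchy--Schwarz inequality over $\mce$ yields $|B_h(w,z)|\le C\sum_{e}h^{1/2}|w|_{3,2,\omega_e}\cdot h^{1/2}|z|_{2,2,\omega_e}\le Ch|w|_{3,2,h}|z|_{2,2,h}$, which is the claimed bound \eqref{eq20180817_2}.
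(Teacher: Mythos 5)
The paper never proves this lemma---it is quoted from Elliott--French \cite{elliott1989nonconforming}---so the benchmark is the classical Morley consistency argument there, and your skeleton matches it: reduce the element sum to a sum over edges (correctly reading the missing $\partial z/\partial s$ factor in the mixed term of the statement), use the Morley degrees of freedom to get $\int_e\llbracket\partial z/\partial n\rrbracket\,\rd S=\int_e\llbracket\partial z/\partial s\rrbracket\,\rd S=0$ on interior edges, subtract edge averages from the $H^1$ coefficients built from $D^2w$ so that $\|g-\bar g\|_{L^2(e)}\le Ch^{1/2}|w|_{3,2,K}$, and finish with Cauchy--Schwarz over edges; the $h^{1/2}\cdot h^{1/2}$ bookkeeping is right.

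The genuine gap is in your treatment of the ``delicate factor.'' You bound $\|\llbracket\partial z/\partial n\rrbracket\|_{L^2(e)}$ by a midpoint-Poincar\'e inequality on the edge followed by an inverse/trace estimate applied to the Morley component $z_h$ of a splitting $z=z_c+z_h$. This yields $Ch^{1/2}|z_h|_{2,2,\omega_e}$, not $Ch^{1/2}|z|_{2,2,\omega_e}$: the decomposition $H^{2,h}(\Omega)=S^h\oplus H^2(\Omega)$ is neither unique nor stable ($S^h\cap H^2(\Omega)$ contains, e.g., global quadratics), so $|z_h|_{2,2,h}$ is not controlled by $|z|_{2,2,h}$, and the inverse estimate is unavailable for the non-polynomial part anyway. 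The standard repair is simpler than the obstacle you construct: since $\int_e\llbracket\partial z/\partial n\rrbracket\,\rd S=0$, the edge means $\bar\phi_K,\bar\phi_{K'}$ of the two one-sided traces of $\phi=\partial z/\partial n$ coincide, so $\llbracket\phi\rrbracket=(\phi_K-\bar\phi_K)-(\phi_{K'}-\bar\phi_{K'})$ exactly (equivalently, because your $w$-factor $g-\bar g$ already has zero edge mean, you may subtract any constant from the jump); then the scaled trace inequality plus Poincar\'e on each element give $\|\phi_K-\bar\phi_K\|_{L^2(e)}\le Ch^{1/2}|z|_{2,2,K}$ for \emph{any} piecewise $H^2$ function---no inverse estimate, no decomposition, no midpoint argument. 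The tangential jump is handled identically using the vertex values. Finally, on boundary edges the term $(\Delta w-\tfrac12\partial^2 w/\partial s^2)\,\partial z/\partial n$ is not killed by $\partial w/\partial n=0$; one needs the zero-mean property of $\partial z/\partial n$ on boundary edges, coming from the boundary degrees of freedom (as in $S_E^h$ and $H_E^2$), a point your ``control the remaining contributions'' leaves unresolved and which reflects that the lemma is really applied to functions with the natural boundary conditions.
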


For any $w\in S_E^h$, using Lemma \ref{lem20180817_1} and the inverse inequality, we have  
$$ 
\begin{aligned}
|w|_{1,2,h}^2 & \leq |w|_{2,2,h}\|w\|_{0,2} + \left| \sum_{K \in
\mathcal{T}_h} \int_{\partial K} \frac{\partial w}{\partial n}z \,\rd S
\right| \leq C \|w\|_{2,2,h}\|w\|_{0,2} \\ 
&\leq C ( |w|_{2,2,h}\|w\|_{0,2} + |w|_{1,2,h}\|w\|_{0,2} +
\|w\|_{0,2}^2 ). 
\end{aligned}
$$ 
The kick-back argument gives 
$$ 
|w|_{1,2,h}^2 \leq C ( |w|_{2,2,h}\|w\|_{0,2} + \|w\|_{0,2}^2 ).
$$ 
Hence,
\begin{align}
\label{eq20180817_3}
\tilde{b}_h(w,w) &= \epsilon a_h(w,w) + \frac{1}{\epsilon} (f'(u)
    \nabla w, \nabla w) + \frac{\alpha_0}{\epsilon^3}(w,w) \\
& \geq \frac{1}{\epsilon^3} \left( 
\frac{\epsilon^4}{2}|w|_{2,2,h}^2  - C\epsilon^2|w|_{1,2,h}^2 +
\alpha_0\|w\|_{0,2}^2 \right) \notag \\
& \geq \frac{1}{\epsilon^3} \left( 
\frac{\epsilon^4}{4}|w|_{2,2,h}^2 +
(\alpha_0 - C)\|w\|_{0,2}^2 \right) \notag,
\end{align}
which implies the coercivity of $\tilde{b}_h(\cdot,\cdot)$ when
$\alpha_0$ is large enough but independent of $\epsilon$. 

Next we give the properties of $P_h$.  Define $b_h(\cdot,\cdot) :=
\epsilon^3 \tilde{b}_h(\cdot,\cdot)$ and a norm 
$$ 
\triplenorm{v}_{2,2,h}^2 := \epsilon^4 |v|_{2,2,h}^2 +
\epsilon^2|v|_{1,2,h}^2 + \|v\|_{0,2}^2,\qquad 
$$ 

\begin{lemma} \label{lem20180817_3}
Consider the following problems: 
\begin{align}
b_h(v, \eta) &= F_h(\eta) \quad \forall \eta \in H_E^2(\Omega),
  \label{eq20180817_4} \\
b_h(v_h, \chi) &= \widetilde{F}_h(\chi) \quad \forall \chi \in S_E^h.
  \label{eq20180817_5}
\end{align}
Then we have 
\begin{align}
& \quad ~\triplenorm{v - v_h}_{2,2,h} \label{eq20180817_6} \\ 
& \leq Ch\left\{ (\epsilon+h)^2|v|_{3,2} +
|v|_{1,2} + \sup_{\chi \in S_E^h} \frac{F_h(\E_h\chi) -
\widetilde{F}_h(\chi) + \alpha_0(v, \chi -
\E_h\chi)}{\triplenorm{\chi}_{2,2,h}} \right\}. \notag 
\end{align}
\end{lemma}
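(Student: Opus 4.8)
The plan is to treat \eqref{eq20180817_4}--\eqref{eq20180817_5} as a conforming/nonconforming pair and run the second Strang lemma in the mesh-dependent norm $\triplenorm{\cdot}_{2,2,h}$, using the enriching operator $\E_h$ as the bridge between the nonconforming space $S_E^h$ and the conforming test space $H_E^2(\Omega)$. Two structural facts drive everything. First, coercivity: the estimate \eqref{eq20180817_3}, together with the kick-back bound $|w|_{1,2,h}^2\le C(|w|_{2,2,h}\|w\|_{0,2}+\|w\|_{0,2}^2)$ and Young's inequality to absorb the $\epsilon^2|w|_{1,2,h}^2$ piece of the triple norm, gives $\triplenorm{w}_{2,2,h}^2\le C\,b_h(w,w)$ for all $w\in S_E^h$ when $\alpha_0$ is large enough. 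Second, boundedness: writing $b_h(\psi,\eta)=\epsilon^4 a_h(\psi,\eta)+\epsilon^2(f'(u)\nabla\psi,\nabla\eta)_h+\alpha_0(\psi,\eta)$ and pairing the weights $\epsilon^4,\epsilon^2,1$ with the three pieces of $\triplenorm{\cdot}_{2,2,h}$ (using that $\|f'(u)\|_{0,\infty}\le C$ is $\epsilon$-independent by the uniform $L^\infty$ bound on $u$) yields $|b_h(\psi,\eta)|\le C\triplenorm{\psi}_{2,2,h}\triplenorm{\eta}_{2,2,h}$ for $\psi,\eta\in H_E^{2,h}(\Omega)$.

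Next I would split $v-v_h=(v-I_hv)+(I_hv-v_h)$ and set $w:=I_hv-v_h\in S_E^h$. Coercivity together with the two equations gives
\[
\triplenorm{w}_{2,2,h}^2\le C\,b_h(w,w)=C\big[b_h(I_hv-v,w)+\big(b_h(v,w)-\widetilde F_h(w)\big)\big].
\]
The interpolation term is handled by boundedness and \eqref{eq20170812_6}: since $|v-I_hv|_{j,2,h}\le Ch^{3-j}|v|_{3,2}$, one gets $\triplenorm{v-I_hv}_{2,2,h}\le C(\epsilon^2h+\epsilon h^2+h^3)|v|_{3,2}\le Ch(\epsilon+h)^2|v|_{3,2}$, hence $|b_h(I_hv-v,w)|\le Ch(\epsilon+h)^2|v|_{3,2}\,\triplenorm{w}_{2,2,h}$. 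The same interpolation bound controls the first summand in the triangle inequality $\triplenorm{v-v_h}_{2,2,h}\le\triplenorm{v-I_hv}_{2,2,h}+\triplenorm{w}_{2,2,h}$. Everything then reduces to the consistency term $b_h(v,w)-\widetilde F_h(w)$.

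For the consistency term I would insert $\E_h w\in\widetilde S_E^h\subset H_E^2(\Omega)$, so that $\E_h w$ is admissible in \eqref{eq20180817_4} and $b_h(v,\E_h w)=F_h(\E_h w)$. Thus
\[
b_h(v,w)-\widetilde F_h(w)=b_h(v,w-\E_h w)+\big(F_h(\E_h w)-\widetilde F_h(w)\big),
\]
and splitting off the zeroth-order part $\alpha_0(v,w-\E_h w)$ of $b_h(v,w-\E_h w)$, the combination $F_h(\E_h w)-\widetilde F_h(w)+\alpha_0(v,w-\E_h w)$ is exactly (with $\chi=w$) the numerator in the supremum of \eqref{eq20180817_6}, hence bounded by $\triplenorm{w}_{2,2,h}$ times that supremum. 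It remains to estimate $\epsilon^4 a_h(v,w-\E_h w)+\epsilon^2(f'(u)\nabla v,\nabla(w-\E_h w))_h$. The $f'$-term is bounded directly by $\epsilon^2\|f'(u)\|_{0,\infty}|v|_{1,2}|w-\E_h w|_{1,2,h}\le Ch\,|v|_{1,2}(\epsilon^2|w|_{2,2,h})\le Ch|v|_{1,2}\triplenorm{w}_{2,2,h}$ via \eqref{eq20171006_1}. For the fourth-order term I would integrate by parts once on each element, moving one derivative onto the $H^3$-function $v$: the volume remainder pairs third derivatives of $v$ with $\nabla(w-\E_h w)$ and is $O(h)$ by \eqref{eq20171006_1}, while the inter-element boundary contributions are precisely of the form controlled by Lemma \ref{lem20180817_2} (and Lemma \ref{lem20180817_1}), whose $O(h)$ bounds exploit that $\E_h w$ matches the normal-derivative midpoint values of $w$. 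Tracking the $\epsilon^4$ weight through the triple norm, $\epsilon^4|w|_{2,2,h}=\epsilon^2(\epsilon^2|w|_{2,2,h})\le\epsilon^2\triplenorm{w}_{2,2,h}$, gives $\le Ch\epsilon^2|v|_{3,2}\triplenorm{w}_{2,2,h}\le Ch(\epsilon+h)^2|v|_{3,2}\triplenorm{w}_{2,2,h}$. Collecting the three contributions, dividing by $\triplenorm{w}_{2,2,h}$, and adding $\triplenorm{v-I_hv}_{2,2,h}$ yields \eqref{eq20180817_6}.

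I expect the fourth-order consistency estimate to be the main obstacle: it is here that the nonconformity of the Morley space must be absorbed with a genuine power of $h$ and \emph{without} raising the regularity demand on $v$ above $H^3$, which forces the single integration by parts and the careful invocation of Lemmas \ref{lem20180817_1}--\ref{lem20180817_2}; at the same time the weights $\epsilon^4,\epsilon^2,1$ in $b_h$ and in $\triplenorm{\cdot}_{2,2,h}$ must be matched exactly so that the final constant is $\epsilon$-independent and the $(\epsilon+h)^2$ structure emerges, which also relies on the $\epsilon$-uniform bound for $\|f'(u)\|_{0,\infty}$.
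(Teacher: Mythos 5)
Your proposal is correct and takes essentially the same route as the paper: coercivity of $b_h$ in $\triplenorm{\cdot}_{2,2,h}$ via the kick-back bound, a Strang-type estimate, insertion of $\E_h\chi$ to invoke \eqref{eq20180817_4}, Lemma \ref{lem20180817_2} together with \eqref{eq20171006_1} for the $\epsilon^4 a_h(v,\chi-\E_h\chi)$ consistency piece, and the Morley interpolation estimates \eqref{eq20170812_6} for the approximation term, with the same weight-matching that yields the $(\epsilon+h)^2$ factor. The only differences are presentational: the paper cites the Strang Lemma abstractly (infimum over $\psi\in S_E^h$) where you re-derive it through the splitting $v-v_h=(v-I_hv)+(I_hv-v_h)$, and you spell out the elementwise integration by parts that the paper leaves implicit in its appeal to Lemma \ref{lem20180817_2}.
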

\begin{proof}
Using \eqref{eq20180817_3} and the Strang Lemma, we have 
$$ 
\begin{aligned}
&\quad~ \triplenorm{v - v_h}_{2,2,h} \\ 
& \leq C \left( \inf_{\psi \in S_E^h}\triplenorm{v -
\psi}_{2,2,h} + \sup_{\chi \in S_E^h} \frac{b_h(v, \chi) -
\widetilde{F}_h(\chi)}{\triplenorm{\chi}_{2,2,h}} \right) \\
& \leq 
 C \left( \inf_{\psi \in S_E^h}\triplenorm{v -
\psi}_{2,2,h} + \sup_{\chi \in S_E^h}
\frac{b_h(v, \chi - \E_h \chi) + b_h(v,\E_h\chi)-
\widetilde{F}_h(\chi)}{\triplenorm{\chi}_{2,2,h}} \right) \\
& \leq 
 C \left( \inf_{\psi \in S_E^h}\triplenorm{v -
\psi}_{2,2,h} + \sup_{\chi \in S_E^h}
\frac{b_h(v, \chi - \E_h \chi) + F_h(\E_h\chi)-
\widetilde{F}_h(\chi)}{\triplenorm{\chi}_{2,2,h}} 
\right).
\end{aligned}
$$ 
Using Lemma \ref{lem20180817_2} and \eqref{eq20171006_1}, we have 
$$ 
\begin{aligned}
b_h(v, \chi - \E_h\chi) &= \epsilon^4 a_h(v, \chi - \E_h\chi) +
\epsilon^2 (f'(u)\nabla v, \nabla(\chi - \E_h\chi)) + (\alpha_0v, \chi
- \E_h\chi) \\
& \leq Ch\left( 
\epsilon^4 |v|_{3,2}|\chi|_{2,2,h} + \epsilon^2|v|_{1,2}|\chi|_{2,2,h}
\right) + (\alpha_0 v, \chi - \E_h\chi) \\
& \leq Ch\left( \epsilon^2 |v|_{3,2} + |v|_{1,2} \right) 
\triplenorm{\chi}_{2,2,h}
+ (\alpha_0 v, \chi - \E_h\chi) \\
\end{aligned}
$$ 
Then we obtain the desired bound \eqref{eq20180817_6} by
the approximation properties of Morley interpolation operator
\eqref{eq20170812_6}.
\end{proof}

\begin{theorem} \label{thm20180817_1}
Suppose $u$ solves the Cahn-Hilliard equation \eqref{eq20170504_1} --
\eqref{eq20170504_3}, then we have 
\begin{align}
& \quad ~ 
\epsilon^2|u - P_hu|_{2,2,h} + \epsilon|u - P_h u|_{1,2,h} + \|u -
P_hu\|_{0,2} \label{eq20180817_7}\\ 
& \leq Ch \big( (\epsilon+h)^2|u|_{3,2} + |u|_{1,2} + \epsilon
    h\|u_t\|_{0,2} \big), \notag \\
& \quad ~ 
\epsilon^2|u_t - (P_hu)_t|_{2,2,h} + \epsilon|u_t - (P_h u)_t|_{1,2,h}
+ \|u_t - (P_hu)_t\|_{0,2} \label{eq20180817_8} \\
& \leq Ch \Big\{ (\epsilon+h)^2|u_t|_{3,2} + |u_t|_{1,2} +
\epsilon h \|u_{tt}\|_{0,2} + \|u_t\nabla u\|_{0,2} \notag \\
&\quad~+ \epsilon^{-1}|\ln
h|^{1/2}\|u_t\|_{0,2} ((\epsilon+h)^2|u|_{3,2} + |u|_{1,2} +
\epsilon h\|u_t\|_{0,2})
\Big\}. \notag
\end{align}
\end{theorem}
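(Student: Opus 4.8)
The plan is to obtain both estimates from the abstract bound \eqref{eq20180817_6} of Lemma \ref{lem20180817_3}, so the entire task reduces to identifying the data $(F_h,\widetilde F_h)$ in each case and then estimating the consistency functional $F_h(\E_h\chi)-\widetilde F_h(\chi)+\alpha_0(v,\chi-\E_h\chi)$ appearing in the supremum.

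For \eqref{eq20180817_7} I would take $v=u$, $v_h=P_hu$, and read off from the definition \eqref{eq20170504_14} that $\widetilde F_h(\chi)=\epsilon^3(\epsilon\Delta^2u-\tfrac1\epsilon\nabla\cdot(f'(u)\nabla u)+\alpha u,\chi)$, while $F_h(\eta)=b_h(u,\eta)$. Since $\E_h\chi\in\widetilde S_E^h\subset H_E^2(\D)$ is $C^1$, the piecewise form $a_h(u,\E_h\chi)$ equals $a(u,\E_h\chi)$, and integration by parts gives $a(u,\E_h\chi)=(\Delta^2u,\E_h\chi)+B_h(u,\E_h\chi)$; likewise $(f'(u)\nabla u,\nabla\E_h\chi)$ integrates to $-(\nabla\cdot(f'(u)\nabla u),\E_h\chi)$ with no boundary contribution because $\partial u/\partial n=0$. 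After these manipulations the $\alpha$-contribution is cancelled exactly by the correction $\alpha_0(u,\chi-\E_h\chi)$, and the residual collapses to $\epsilon^3(\epsilon\Delta^2u-\tfrac1\epsilon\nabla\cdot(f'(u)\nabla u),\E_h\chi-\chi)+\epsilon^4B_h(u,\E_h\chi)$. The key simplification is to invoke \eqref{eq20170504_1} in the form $\epsilon\Delta^2u-\tfrac1\epsilon\nabla\cdot(f'(u)\nabla u)=-u_t$, turning the first piece into $-\epsilon^3(u_t,\E_h\chi-\chi)$. Bounding this by \eqref{eq20171006_1}, the $B_h$-term by Lemma \ref{lem20180817_2}, and using $|\chi|_{2,2,h}\le\epsilon^{-2}\triplenorm{\chi}_{2,2,h}$, then yields \eqref{eq20180817_7}.

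For \eqref{eq20180817_8} I would first differentiate the defining relation of $P_hu$ in $t$. Because $S_E^h$ is time-independent, $(P_hu)_t$ solves $b_h((P_hu)_t,\chi)=\widetilde F_h(\chi)$ with $\widetilde F_h(\chi)=\epsilon^3\big(\tfrac{d}{dt}(\mathrm{RHS}),\chi\big)-\epsilon^2(f''(u)u_t\nabla P_hu,\nabla\chi)_h$, the last term being the genuinely new contribution produced by the $t$-dependence of $f'(u)$ in $\tilde b_h$. Taking $v=u_t$ and repeating the integration-by-parts reduction (again $\partial u_t/\partial n=0$), the consistency functional reduces, after using the differentiated equation \eqref{eq20180606_3} in the form $\epsilon\Delta^2u_t-\tfrac1\epsilon\nabla\cdot(f'(u)\nabla u_t)=\tfrac1\epsilon\nabla\cdot(f''(u)u_t\nabla u)-u_{tt}$, to four groups: a term $-\epsilon^3(u_{tt},\E_h\chi-\chi)$; the boundary term $\epsilon^4B_h(u_t,\E_h\chi)$; a term $\epsilon^2(f''(u)u_t\nabla u,\nabla(\chi-\E_h\chi))_h$; and the critical term $\epsilon^2(f''(u)u_t\nabla(P_hu-u),\nabla\chi)_h$. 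The first three are handled exactly as before, using \eqref{eq20171006_1}, Lemma \ref{lem20180817_2}, and the a priori bound $\|u\|_{0,\infty}\le C$, and produce the $\epsilon h\|u_{tt}\|_{0,2}$, $(\epsilon+h)^2|u_t|_{3,2}$, and $\|u_t\nabla u\|_{0,2}$ contributions respectively.

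The main obstacle is the critical term $\epsilon^2(f''(u)u_t\nabla(P_hu-u),\nabla\chi)_h$, which couples the error $P_hu-u$ back into the estimate. Here the plan is to peel off $|\chi|_{1,2,h}\le\epsilon^{-1}\triplenorm{\chi}_{2,2,h}$, bound $\|f''(u)u_t\nabla(P_hu-u)\|_{0,2}\le C\|u\|_{0,\infty}\|u_t\|_{0,2}\|\nabla(P_hu-u)\|_{0,\infty}$, and control $\|\nabla(P_hu-u)\|_{0,\infty}$ by splitting $P_hu-u=(P_hu-I_hu)+(I_hu-u)$; on the discrete part I would invoke the two-dimensional discrete Sobolev inequality $\|\nabla\psi\|_{0,\infty}\le C|\ln h|^{1/2}|\psi|_{2,2,h}$ for $\psi\in S_E^h$ (the borderline $H^2\hookrightarrow W^{1,\infty}$ embedding, which is exactly where the logarithmic factor enters), while the interpolation part is absorbed by \eqref{eq20170812_6}. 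Crucially, $|P_hu-u|_{2,2,h}$ is then controlled by the already-proven estimate \eqref{eq20180817_7}, i.e. $|P_hu-u|_{2,2,h}\le C\epsilon^{-2}h\big((\epsilon+h)^2|u|_{3,2}+|u|_{1,2}+\epsilon h\|u_t\|_{0,2}\big)$, and the factor $\epsilon^2\cdot\epsilon^{-1}\cdot\epsilon^{-2}=\epsilon^{-1}$ accounts precisely for the $\epsilon^{-1}|\ln h|^{1/2}\|u_t\|_{0,2}$ prefactor multiplying that bracket in \eqref{eq20180817_8}. This recursive dependence on the lower-order estimate, together with the logarithmic Sobolev factor, is the delicate point; everything else is a routine combination of \eqref{eq20180817_6}, Lemma \ref{lem20180817_2}, and the interpolation and enrichment bounds \eqref{eq20170812_6} and \eqref{eq20171006_1}.
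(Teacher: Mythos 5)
Your proposal is correct and structurally it is the paper's proof: both estimates come from Lemma \ref{lem20180817_3} after identifying $(F_h,\widetilde F_h)$, with the equation \eqref{eq20170504_1} (resp.\ its time derivative \eqref{eq20180606_3}) collapsing the strong-form residual to $\pm\epsilon^3(u_t,\E_h\chi-\chi)$ (resp.\ $\pm\epsilon^3(u_{tt},\E_h\chi-\chi)$), exact cancellation of the $\alpha_0$-terms, and for \eqref{eq20180817_8} the same three extra groups, including the recursive use of \eqref{eq20180817_7} to control $u-P_hu$. Two local differences are worth recording. First, the paper uses the exact consistency $F_h(\psi)=\widetilde F_h(\psi)=(\epsilon^3u_t+\alpha_0u,\psi)$ (the Poisson ratio $\tfrac12$ in $a(\cdot,\cdot)$ makes the Neumann conditions natural), so no residual $B_h(u,\E_h\chi)$ appears; your retention of that term, bounded by Lemma \ref{lem20180817_2} and absorbed into $(\epsilon+h)^2|u|_{3,2}$, is harmless. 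Second, and more substantively, you treat the critical term $\epsilon^2(f''(u)u_t\nabla(u-P_hu),\nabla\chi)_h$ dually to the paper: the paper puts $\nabla\chi$ in $L^\infty$ --- using that the gradient of a Morley function lies in the Crouzeix--Raviart space, so the discrete Sobolev inequality gives $\|\nabla\chi\|_{0,\infty}\le C|\ln h|^{1/2}\|\chi\|_{2,2,h}\le C|\ln h|^{1/2}\epsilon^{-2}\triplenorm{\chi}_{2,2,h}$ --- and keeps $|u-P_hu|_{1,2,h}$ in $L^2$, whereas you put $\nabla(P_hu-u)$ in $L^\infty$ and pay $|\chi|_{1,2,h}\le\epsilon^{-1}\triplenorm{\chi}_{2,2,h}$; both bookkeepings yield the same $\epsilon^{-1}|\ln h|^{1/2}$ prefactor. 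Your variant needs two small repairs to be airtight: the discrete Sobolev bound should read $\|\nabla\psi\|_{0,\infty}\le C|\ln h|^{1/2}\|\psi\|_{2,2,h}$ (full norm, not the seminorm $|\psi|_{2,2,h}$ alone --- harmless here, since $|P_hu-I_hu|_{1,2,h}$ is also controlled by \eqref{eq20180817_7} and \eqref{eq20170812_6}), and the interpolation piece $\|\nabla(u-I_hu)\|_{0,\infty}$ is not covered verbatim by \eqref{eq20170812_6}, which keeps the same $p$ on both sides; you need the Sobolev-scaled two-dimensional estimate $|u-I_hu|_{1,\infty,K}\le Ch|u|_{3,2,K}$, which is standard and is then absorbed into the bracket because $(\epsilon+h)^2\epsilon^{-2}\ge1$. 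The paper's placement of the logarithm on $\nabla\chi$ simply sidesteps both issues; with these two adjustments your argument is complete and gives the stated bounds.
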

\begin{proof}
Taking $v = u$ and $v_h = P_hu$ in Lemma \ref{lem20180817_3}, and
noticing that  
$$ 
F_h(\psi) = \tilde{F}_h(\psi) = (\epsilon^4 \Delta^2u -
\epsilon^2\Delta f(u) + \alpha_0 u, \psi) = (\epsilon^3 u_t +
\alpha_0 u, \psi),
$$ 
we obtain the bound \eqref{eq20180817_7} from \eqref{eq20171006_1} and
\eqref{eq20180817_6}.

Taking $v = u_t$ and $v_h = (P_hu)_t$, we have 
$$ 
\begin{aligned}
F_h(\psi) &= (\epsilon^4 \Delta^2 u_t - \epsilon^2 \Delta f(u)_t +
\alpha_0 u_t, \psi) - (\epsilon^2 f''(u)u_t \nabla u, \nabla \psi)_h, \\
\widetilde{F}_h(\psi) &= (\epsilon^4 \Delta^2 u_t - \epsilon^2 \Delta
    f(u)_t + \alpha_0 u_t, \psi) - (\epsilon^2 f''(u)u_t \nabla P_hu,
    \nabla \psi)_h.
\end{aligned}
$$ 
Then we get 
$$ 
\begin{aligned}
& \quad ~F_h(\E_h \chi) - \widetilde{F}(\chi) + \alpha_0(u_t, \chi -
\E_h \chi) \\
&= (\epsilon^4 \Delta^2 u_t - \epsilon^2 \Delta f(u)_t, \E_h \chi -
    \chi) \\
& \quad ~- (\epsilon^2 f''(u)u_t\nabla u, \nabla \E_h\chi - \nabla\chi) -
(\epsilon^2 f''(u)u_t\nabla(u - P_hu), \nabla \chi)\\
&\leq \epsilon^3h^2\|u_{tt}\|_{0,2}|\chi|_{2,2,h} + C\epsilon^2 h
\|u_t\nabla u\|_{0,2} |\chi|_{2,2,h} + C\epsilon^2\|u_t\|_{0,2}\|\nabla
\chi\|_{0,\infty} |u - P_hu|_{1,2,h} \\
&\leq Ch\Big\{
\epsilon h\|u_{tt}\|_{0,2} + \|u_t\nabla u\|_{0,2} \\ 
&\quad~+ \epsilon^{-1}|\ln
h|^{1/2}\|u_t\|_{0,2} ((\epsilon+h)^2|u|_{3,2} + |u|_{1,2} +
\epsilon h\|u_t\|_{0,2})
\Big\}\triplenorm{\chi}_{2,2,h},
\end{aligned}
$$
where we use the discrete Sobolev inequality and the fact that $\nabla
\chi$ belongs to the Crouzeix-Raviar finite element space
\cite{brenner2015forty}. This implies the bound \eqref{eq20180817_8}.
\end{proof}

Combining with the a priori estimates of the bounds given in Section
\ref{sec2}, we have the following theorem. 
\begin{theorem} \label{thm20180818_1}
Assume $h \leq C\epsilon$, then there hold
\begin{align}
& \epsilon^4|u - P_hu|_{2,2,h}^2 + \epsilon^2|u - P_h
u|_{1,2,h}^2 + \|u - P_hu\|_{0,2}^2 
\leq Ch^2\rho_4(\epsilon), \label{eq20180818_1} \\ 
& \quad ~ \int_0^T \epsilon^4|u_t - (P_hu)_t|_{2,2,h}^2 +
\epsilon^2|u_t - (P_hu)_t|_{1,2,h}^2 + \|u_t - (P_hu)_t\|_{0,2}^2 \,\rd
s
\label{eq20180818_2} \\
&\leq Ch^2\epsilon^4\rho_3(\epsilon) + Ch^2|\ln h| \rho_5(\epsilon),
\notag
\end{align}
where 
$$ 
\begin{aligned}
\rho_4(\epsilon) &:=
\epsilon^{-\max\{2\sigma_1+\frac{13}{2},2\sigma_3+\frac{7}{2},2\sigma_2+4,2\sigma_4\}
+4}, \\
\rho_5(\epsilon) &:=
\epsilon^{-2\max\{2\sigma_1+\frac{13}{2},2\sigma_3+\frac{7}{2},2\sigma_2+4,2\sigma_4\}
+2}.
\end{aligned}
$$ 
\end{theorem}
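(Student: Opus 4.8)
The plan is to obtain both bounds by squaring the pointwise-in-time estimates \eqref{eq20180817_7} and \eqref{eq20180817_8} of Theorem \ref{thm20180817_1} and then inserting the a priori regularity bounds collected in Section \ref{sec2}. For \eqref{eq20180818_1}, I first note that its left-hand side is, up to a fixed constant, the square of the left-hand side of \eqref{eq20180817_7}, since $(\epsilon^2|u-P_hu|_{2,2,h})^2+(\epsilon|u-P_hu|_{1,2,h})^2+\|u-P_hu\|_{0,2}^2 = a^2+b^2+c^2\le(a+b+c)^2$. Squaring the right-hand side of \eqref{eq20180817_7} and using the standing hypothesis $h\le C\epsilon$ to replace $(\epsilon+h)^4$ and $\epsilon^2h^2$ by $C\epsilon^4$, I reduce the problem to bounding $\epsilon^4|u|_{3,2}^2+|u|_{1,2}^2+\epsilon^4\|u_t\|_{0,2}^2$. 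Here $|u|_{1,2}$ is controlled by the energy estimate \eqref{eq2.5}, $|u|_{3,2}$ by \eqref{eq2.12}--\eqref{eq2.13_add} (via elliptic regularity, available since $\partial\Omega\in C^{2,1}$), and $\|u_t\|_{0,2}$ by \eqref{eq2.15_add}. A short comparison of the three resulting $\epsilon$-exponents shows that the $\epsilon^4\|u_t\|_{0,2}^2$ term dominates, which produces exactly $\rho_4(\epsilon)$.

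For \eqref{eq20180818_2}, I would square \eqref{eq20180817_8}, integrate over $(0,T)$, and treat the five resulting contributions separately. The terms coming from $|u_t|_{3,2}$ and $|u_t|_{1,2}$ are handled by \eqref{eq2.18}; the term $\epsilon^2h^2\|u_{tt}\|_{0,2}^2$ is handled by the new a priori bound \eqref{eq20180606_2} and, after using $h\le C\epsilon$, yields precisely the leading contribution $Ch^2\epsilon^4\rho_3(\epsilon)$; and the term $\|u_t\nabla u\|_{0,2}^2$ is bounded by $\|\nabla u\|_{L^\infty(L^\infty)}^2\int_0^T\|u_t\|_{0,2}^2\,\rd s$ through \eqref{eq20180801_3} and \eqref{eq2.15_add}. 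The only genuinely new mechanism is the last, logarithmic term: its parenthetical factor $(\epsilon+h)^2|u|_{3,2}+|u|_{1,2}+\epsilon h\|u_t\|_{0,2}$ is exactly the quantity bounded by $\rho_4(\epsilon)$ in the proof of \eqref{eq20180818_1}, so pulling that bound out uniformly in $t$ and using \eqref{eq2.15_add} for $\int_0^T\|u_t\|_{0,2}^2\,\rd s$ leaves $Ch^2|\ln h|\,\epsilon^{-2}\rho_4(\epsilon)\,\epsilon^{-M}$, where $M:=\max\{2\sigma_1+\tfrac{13}{2},2\sigma_3+\tfrac72,2\sigma_2+4,2\sigma_4\}$; since $\rho_4=\epsilon^{-M+4}$ this collapses to $Ch^2|\ln h|\rho_5(\epsilon)$.

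The main obstacle is not any single estimate but the $\epsilon$-power bookkeeping needed to verify that the two advertised terms $\rho_3$ and $\rho_5$ really dominate the remaining contributions. Concretely, one has to check inequalities such as $\rho_0(\epsilon)\le C\epsilon^4\rho_3(\epsilon)$ (so that the $|u_t|_{3,2}$ and $|u_t|_{1,2}$ terms are absorbed, using that $\rho_2$ contains the factor $\epsilon^{-\max\{\sigma_1+\frac52,\sigma_3+1\}-3}\rho_0$) and $M-\max\{\sigma_1+\tfrac52,\sigma_3+1\}\ge 2$ (so that $\|u_t\nabla u\|_{0,2}^2$ is absorbed into $\rho_5$, using $|\ln h|\ge 1$); both hold because $M\ge 2\sigma_1+\tfrac{13}{2}$ and $M\ge 2\sigma_3+\tfrac72$ leave a comfortable margin. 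This is also where the \emph{minimal} regularity emphasized in the introduction enters: the $u_{tt}$ contribution must be controlled in $L^2(L^2)$ through \eqref{eq20180606_2} rather than in $L^\infty(L^2)$, and recognizing the logarithmic factor's parenthetical as the $\rho_4$ bound from the first step is precisely what keeps the final exponents polynomial in $1/\epsilon$.
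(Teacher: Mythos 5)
Your proposal is correct and follows essentially the same route as the paper: square the pointwise estimates \eqref{eq20180817_7}--\eqref{eq20180817_8} of Theorem \ref{thm20180817_1}, use $h\le C\epsilon$ to absorb $(\epsilon+h)$ and $\epsilon h$ factors, insert the a priori bounds \eqref{eq2.5}, \eqref{eq2.12}--\eqref{eq2.13_add}, \eqref{eq2.15_add}, \eqref{eq2.18}, \eqref{eq20180606_2} and \eqref{eq20180801_3}, and verify by exponent bookkeeping that $\rho_4$, $\epsilon^4\rho_3$ and $\rho_5$ dominate. The only (immaterial) deviation is that you absorb the $\|u_t\nabla u\|_{0,2}^2$ contribution into the $|\ln h|\rho_5$ term via $M-\max\{\sigma_1+\frac52,\sigma_3+1\}\ge 2$, whereas the paper absorbs it into $C\epsilon^4\rho_3(\epsilon)$; both checks are valid.
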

\begin{proof}
Using \eqref{eq2.5}, \eqref{eq2.13_add} and \eqref{eq2.15_add}, we have 
\begin{align}
& \quad ~(\epsilon+h)^4 |u|_{3,2}^2 + |u|_{1,2}^2 +
\epsilon^2 h^2 \|u_t\|_{0,2}^2 \label{eq20180818_3} \\
& \leq C\epsilon^{-\max\{2\sigma_1+5, 2\sigma_3+2\}+4} +
C\epsilon^{-2\sigma_1 - 1} + 
C\epsilon^{-\max\{ 
2\sigma_1+\frac{13}{2},2\sigma_3+\frac{7}{2},2\sigma_2+4,2\sigma_4
\}+4} \notag \\
& \leq C\rho_4(\epsilon), \notag
\end{align}
which implies the bound \eqref{eq20180818_1} by \eqref{eq20180817_7}.

Using \eqref{eq2.15_add}, \eqref{eq20180606_2}, \eqref{eq2.18} and
\eqref{eq20180801_3}, we obtain 
$$ 
\begin{aligned}
&\quad~ \int_0^T (\epsilon+h)^4|u_t|_{3,2}^2 +
|u_t|_{1,2}^2 +
\epsilon^2 h^2 \|u_{tt}\|_{0,2}^2 + \|u_t\nabla u\|_{0,2}^2\,\rd s \\
& \leq C\int_0^T \epsilon^4|u_t|_{3,2}^2
+ |u_t|_{1,2}^2
+ \epsilon^4\|u_{tt}\|_{0,2}^2
+ \|u_t\|_{0,2}^2 \|\nabla u\|_{0,\infty}^2 \,\rd s \\
& \leq C\epsilon^3\rho_0(\epsilon) + C\rho_0(\epsilon) +
C\epsilon^4\rho_3(\epsilon) \\ 
&~+ C\epsilon^{-\max\{\sigma_1+\frac52,
  \sigma_3+1\} - \max\{2\sigma_1+\frac{13}{2}, 2\sigma_3+\frac72,
  2\sigma_2+4, 2\sigma_4 \}} \\
& \leq C\epsilon^4\rho_3(\epsilon).
\end{aligned}
$$ 
Further, using \eqref{eq2.15_add} and \eqref{eq20180818_3}, we obtain 
$$ 
\begin{aligned}
\quad ~ \int_0^T \epsilon^{-2} \|u_t\|_{0,2}^2
((\epsilon+h)^2|u|_{3,2} + |u|_{1,2} + \epsilon
 h\|u_t\|_{0,2})^2\,\rd s \leq C \rho_5(\epsilon). 
\end{aligned}
$$ 
This implies the bound \eqref{eq20180818_2}.
\end{proof}

\begin{corollary}\label{cor20180818_1}
Under the condition that 
\begin{equation} \label{mesh_cond}
h \leq C\epsilon^2 \rho_4^{-\frac12}(\epsilon), \quad 
h \leq C \rho_3^{-\frac12}(\epsilon), \quad 
h|\ln h|^{\frac12} \leq C \epsilon^2 \rho_5^{-\frac12}(\epsilon),
\end{equation}
there hold 
\begin{align} \label{eq20180819_1}  
|P_hu|_{j,2,h}^2 &\leq C(1+|u|_{j,2,h}^2) \quad j=0,1,2, \\
\int_0^T |P_hu|_{j,2,h}^2 \,\rd s&\leq C(1+\int_0^T |u|_{j,2,h}^2) \quad
j=0,1,2, \notag \\
\|P_h u\|_{0,\infty} &\leq C. \notag
\end{align}
\end{corollary}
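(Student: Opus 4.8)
The plan is to obtain every bound by the triangle inequality, writing $P_h u = u + (P_h u - u)$ and controlling the projection error through Theorem \ref{thm20180818_1} together with the mesh conditions \eqref{mesh_cond}. First I would note that the first condition in \eqref{mesh_cond} forces $h^2\rho_4(\epsilon) \leq C\epsilon^4$, so that the left-hand side of \eqref{eq20180818_1} collapses to
\begin{equation*}
|u - P_h u|_{2,2,h}^2 \leq C, \quad |u - P_h u|_{1,2,h}^2 \leq C\epsilon^2, \quad \|u - P_h u\|_{0,2}^2 \leq C\epsilon^4 .
\end{equation*}
Since $\epsilon < 1$, each of these is $O(1)$, whence $|P_h u|_{j,2,h}^2 \leq 2|u|_{j,2,h}^2 + 2|u - P_h u|_{j,2,h}^2 \leq C(1 + |u|_{j,2,h}^2)$ for $j=0,1,2$, which is the first family of estimates. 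The time-integrated family follows immediately by integrating this inequality over $[0,T]$, because the a priori bounds underlying \eqref{eq20180818_1} are $\esssupT$ estimates, so the constant is uniform in $t$ and the pointwise bound holds for a.e.\ $t$.

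The substantive part is the $L^\infty$ bound, for which I would write $\|P_h u\|_{0,\infty} \leq \|u\|_{0,\infty} + \|P_h u - u\|_{0,\infty,h}$ and use that $\|u\|_{0,\infty} \leq C$ uniformly in $\epsilon$ (the same $L^\infty$ control of $u$ already invoked in \eqref{eq20180801_3}). For the discrete error I would split $P_h u - u = (P_h u - I_h u) + (I_h u - u)$. The finite element part $P_h u - I_h u \in S_E^h$ I would handle with a Morley discrete Sobolev inequality derived from the enriching operator: using the continuous embedding $H^2 \hookrightarrow L^\infty$ in two dimensions and \eqref{eq20171006_1} one has $\|\E_h w\|_{0,\infty} \leq C\|\E_h w\|_{2,2} \leq C\|w\|_{2,2,h}$, while the piecewise remainder obeys $\|w - \E_h w\|_{0,\infty,h} \leq Ch^{-1}\|w - \E_h w\|_{0,2,h} \leq Ch|w|_{2,2,h}$ by the inverse inequality and \eqref{eq20171006_1}, giving $\|w\|_{0,\infty,h} \leq C\|w\|_{2,2,h}$ for all $w \in S_E^h$. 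Taking $w = P_h u - I_h u$ and estimating $\|P_h u - I_h u\|_{2,2,h} \leq \|P_h u - u\|_{2,2,h} + \|u - I_h u\|_{2,2,h}$ yields an $O(1)$ quantity: the first term by the bounds just established, the second because \eqref{eq20170812_6} gives $\|u - I_h u\|_{2,2,h} \leq Ch|u|_{3,2}$, and the mesh condition together with \eqref{eq2.13_add} makes $h|u|_{3,2} \leq C$. The interpolation part I would bound by a scaled Bramble--Hilbert argument, $\|I_h u - u\|_{0,\infty,h} \leq Ch^2|u|_{3,2}$, which is likewise $O(1)$ under \eqref{mesh_cond}.

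The main obstacle is precisely this $L^\infty$ bound, and the heart of the matter is keeping the $\epsilon$-dependence under control. The finite element norms of $P_h u$ themselves grow like a negative power of $\epsilon$, so one cannot estimate $\|P_h u\|_{0,\infty}$ directly through $\|P_h u\|_{2,2,h}$; the point is to estimate only the error $P_h u - u$ in the discrete $L^\infty$ norm, where the smallness supplied by the first condition in \eqref{mesh_cond} absorbs the $\epsilon$-powers carried by $\rho_4$ and by $|u|_{3,2} \sim \epsilon^{-\max\{\sigma_1+5/2,\sigma_3+1\}}$, and to borrow the $\epsilon$-uniform bound on $\|u\|_{0,\infty}$ for the remaining piece. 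The essential verification is that every occurrence of a negative power of $\epsilon$ is dominated by the powers of $h$ guaranteed by \eqref{mesh_cond} (for instance $h \leq C\epsilon^2\rho_4^{-1/2}(\epsilon)$ forces $h^2 \leq C\epsilon^{\max\{2\sigma_1+13/2,2\sigma_3+7/2,2\sigma_2+4,2\sigma_4\}}$, which comfortably controls $|u|_{3,2}$), so that all constants are genuinely independent of $\epsilon$ and $h$.
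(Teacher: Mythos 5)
Your proposal is correct, and its skeleton coincides with the paper's: both derive the seminorm bounds and their time-integrated versions by triangle inequality from Theorem \ref{thm20180818_1} once the first condition in \eqref{mesh_cond} collapses the right side of \eqref{eq20180818_1} to $C\epsilon^4$, and both obtain $\|P_hu\|_{0,\infty}\leq C$ by writing $\|P_hu\|_{0,\infty}\leq\|u\|_{0,\infty}+\|u-P_hu\|_{0,\infty}$ and absorbing the error term via the mesh condition. The one place you diverge is in how the error is passed from $L^\infty$ to the broken $H^2$ norm: the paper simply asserts, in one line, the broken Sobolev embedding $\|u-P_hu\|_{0,\infty}\leq\|u-P_hu\|_{2,2,h}\leq C+Ch\epsilon^{-2}\rho_4^{1/2}(\epsilon)\leq C$, whereas you actually prove the required discrete embedding: you split $P_hu-u=(P_hu-I_hu)+(I_hu-u)$, establish $\|w\|_{0,\infty}\leq C\|w\|_{2,2,h}$ for $w\in S_E^h$ via the enriching operator (the $H^2\hookrightarrow L^\infty$ embedding for $\E_h w$ from \eqref{eq20171006_1}, plus an inverse inequality for the piecewise-polynomial remainder $w-\E_h w$), and handle the interpolation piece with \eqref{eq20170812_6}, at the cost of invoking the $H^3$ regularity $|u|_{3,2}$ from \eqref{eq2.13_add} and an extra verification that $h|u|_{3,2}\leq C$ under \eqref{mesh_cond} (which you carry out correctly: $h\leq C\epsilon^2\rho_4^{-1/2}(\epsilon)$ gives $h\leq C\epsilon^{\max\{\sigma_1+13/4,\sigma_3+7/4,\sigma_2+2,\sigma_4\}}$, which dominates $\epsilon^{-\max\{\sigma_1+5/2,\sigma_3+1\}}$). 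So your argument supplies the justification the paper leaves implicit; the paper's route is shorter and needs only $\|u-P_hu\|_{2,2,h}\leq C$, while yours is self-contained but mildly more demanding on regularity and mesh-power bookkeeping.
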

\begin{proof}
By the Sobolev embedding and \eqref{eq20180818_1}, we have 
$$ 
\|P_h u\|_{0,\infty} \leq \|u\|_{0,\infty} + \|u - P_hu\|_{2,2,h} \leq 
C + Ch\epsilon^{-2}\rho_4^{1/2}(\epsilon) \leq C.
$$ 
The first two bounds are the direct consequences of Theorem
\ref{thm20180818_1}. 
\end{proof}

\section{Error Estimates} \label{sec4}

In this section, first we derive the piecewise
${L^{\infty}(L^2)}$ and ${L^2(H^2)}$ error bounds which depend on $\frac{1}{\epsilon}$ polynomially based on the generalized coercivity result in Theorem \ref{thm3.7_add}, and piecewise ${L^{\infty}(H^{-1})}$ and
${L^2(H^1)}$ error bounds. Then we prove the piecewise
${L^{\infty}(H^2)}$ error bound based on the piecewise
${L^{\infty}(L^2)}$ and ${L^2(H^2)}$ error bounds.
Finally, the ${L^\infty(L^\infty)}$ error bound is
established.

Decompose the error
\begin{align}
u-u_h^n=(u-P_hu)+(P_hu-u_h^n):=\rho^n+\theta^n.
\end{align}

The following two lemmas will be used in this section. 
\begin{lemma}[Summation by parts] \label{lem20180515_1} 
Suppose $\{a_n\}_{n=0}^\ell$ and $\{b_n\}_{n=0}^\ell$ are two
sequences, then
\begin{equation*}
\sum_{n=1}^\ell(a^n-a^{n-1},b^n) =
(a^\ell,b^\ell)-(a^0,b^0)-\sum_{n=1}^\ell(a^{n-1},b^n-b^{n-1}).
\end{equation*}
\end{lemma}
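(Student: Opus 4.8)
The plan is to reduce the identity to a telescoping sum via the discrete product rule for the bilinear pairing $(\cdot,\cdot)$. The key observation is that for each fixed index $n$ the bilinearity of the inner product gives the first-difference identity
\begin{equation*}
(a^n,b^n)-(a^{n-1},b^{n-1}) = (a^n-a^{n-1},b^n)+(a^{n-1},b^n-b^{n-1}),
\end{equation*}
which I would verify by expanding the two terms on the right using linearity in each slot and noting that the cross term $(a^{n-1},b^n)$ cancels: the first term contributes $(a^n,b^n)-(a^{n-1},b^n)$ and the second contributes $(a^{n-1},b^n)-(a^{n-1},b^{n-1})$.

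Next I would sum this identity over $n=1,\dots,\ell$. On the left-hand side the sum telescopes to $(a^\ell,b^\ell)-(a^0,b^0)$, while on the right-hand side finite additivity splits the total into $\sum_{n=1}^\ell(a^n-a^{n-1},b^n)$ and $\sum_{n=1}^\ell(a^{n-1},b^n-b^{n-1})$. Equating the two sides and solving for the first of these two sums yields
\begin{equation*}
\sum_{n=1}^\ell(a^n-a^{n-1},b^n) = (a^\ell,b^\ell)-(a^0,b^0)-\sum_{n=1}^\ell(a^{n-1},b^n-b^{n-1}),
\end{equation*}
which is precisely the asserted formula.

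There is no genuine obstacle here: the statement is the classical Abel summation-by-parts formula, and the only ingredients are the bilinearity of $(\cdot,\cdot)$ together with the telescoping of a sum of first differences; no regularity of $u$, no property of the Morley element, and no hypothesis on $\ell$ beyond $\ell\ge 1$ enter the argument. The single point worth recording is that the pairing need only be \emph{bilinear} (neither symmetry nor positivity is invoked), so the lemma applies verbatim to the $L^2$ inner product and, should it be needed later, to the mesh-dependent form $(\cdot,\cdot)_h$ as well; this is exactly the generality required when the identity is later combined with integration by parts in space to treat the nonlinear term.
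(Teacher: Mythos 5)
Your proof is correct: the first-difference (discrete product rule) identity plus telescoping is exactly the classical Abel summation argument, and the paper itself states this lemma without proof, treating it as standard. Your observation that only bilinearity of the pairing is needed (so the identity applies equally to $(\cdot,\cdot)$ and $(\cdot,\cdot)_h$) matches how the lemma is actually used later in the paper's estimates of $J_1$, $J_2$, and $J_3$.
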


\begin{lemma}\label{lem20180409_1} 
Suppose $u(t_n)$ to be the solution of
\eqref{eq20170504_1}--\eqref{eq20170504_5}, and $u_h^n$ to be the
solution of \eqref{eq20170504_11}--\eqref{eq20170504_12}, then
\begin{align*}
\rho^n \in \mathring{S}^h_E,\quad \theta^n \in \mathring{S}^h_E.
\end{align*}
\end{lemma}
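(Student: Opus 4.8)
The plan is to prove the single fact that the integral mean is conserved at all three levels---for the exact solution $u(t_n)$, for its elliptic projection $P_hu(t_n)$, and for the discrete iterates $u_h^n$---and then to read off both claims as differences of equal-mass quantities. The quantity the analysis actually needs is that $\rho^n$ and $\theta^n$ have vanishing mean (so that $-\Delta^{-1}$ and the $H^{-1}$ inner product of \eqref{eq7_add} apply to them); that $\theta^n$ moreover lies in $S_E^h$ is immediate, while $\rho^n$ only lies in the broken space $H_E^{2,h}(\D)$.

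The heart of the argument is that $P_h$ preserves the mean. I would test the defining relation \eqref{eq20170504_14} with $w=1\in S_E^h$. On the left the form \eqref{eq20170504_15} collapses: $a_h(P_hv,1)=0$ because $a_h$ pairs only second derivatives and those of a constant vanish, $(f'(u)\nab P_hv,\nab 1)_h=0$ since $\nab 1=0$, so only $\alpha\int_\D P_hv\,\rd x$ remains. On the right I would integrate by parts twice: $\int_\D\Delta^2 v\,\rd x=\int_{\pa\D}\tfrac{\pa(\Delta v)}{\pa n}\,\rd S=0$ because $v\in R$ forces $\Delta v\in H_E^2(\D)$, and $\int_\D\nab\cdot(f'(u)\nab v)\,\rd x=\int_{\pa\D}f'(u)\tfrac{\pa v}{\pa n}\,\rd S=0$ because $v\in H_E^2(\D)$; what survives is $\alpha\int_\D v\,\rd x$. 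Since $\alpha=\alpha_0\epsilon^{-3}\neq0$, this yields $\int_\D P_hv\,\rd x=\int_\D v\,\rd x$.

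Next I would establish discrete mass conservation by testing the scheme \eqref{eq20170504_11} with $v_h=1\in S_E^h$; the same two cancellations leave $(d_tu_h^n,1)=0$, i.e.\ $\int_\D u_h^n\,\rd x=\int_\D u_h^{n-1}\,\rd x$, so by induction $\int_\D u_h^n\,\rd x=\int_\D u_h^0\,\rd x=\int_\D P_hu_0\,\rd x$, which equals $\int_\D u_0\,\rd x=m_0|\D|$ by the mean-preservation of $P_h$ and the definition of $m_0$. On the other side, $\int_\D u(t_n)\,\rd x=m_0|\D|$ by \eqref{eq2.8}, and applying $P_h$ mean-preservation with $v=u(t_n)$ gives $\int_\D P_hu(t_n)\,\rd x=m_0|\D|$.

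Subtracting then gives $\int_\D\rho^n\,\rd x=\int_\D\theta^n\,\rd x=0$; and since $\theta^n=P_hu(t_n)-u_h^n$ is a difference of two $S_E^h$ functions, $\theta^n\in\mathring{S}_E^h$. I expect the only delicate point to be justifying the two boundary integrals in the $P_h$ computation: this rests on the definition $R=\{v\in H_E^2(\D):\Delta v\in H_E^2(\D)\}$ together with enough regularity of $u(t_n)$ for $u(t_n)\in R$ (and hence $P_hu(t_n)$ to be defined), which is supplied by Theorems~\ref{prop2.1} and~\ref{thm20180609_1}; the rest is bookkeeping with $\int_\D(\cdot)\,\rd x$.
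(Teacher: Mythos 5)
Your proposal is correct and follows essentially the same route as the paper: test the elliptic projection identity \eqref{eq20170504_14} with $w=1$, test the discrete scheme \eqref{eq20170504_11} with $v_h=1$, and chain the resulting mass equalities through $u_h^0=P_hu(0)$ (the paper derives exact mass conservation by testing \eqref{eq20170504_1} with $1$, whereas you cite \eqref{eq2.8}, an immaterial difference). Your added details---the explicit integration by parts justifying that $P_h$ preserves the mean, and the observation that $\rho^n$ is only a mean-zero element of the broken space $H_E^{2,h}(\Omega)$ rather than of $S_E^h$ itself---are both accurate and slightly sharper than the paper's own write-up.
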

\begin{proof}
Testing \eqref{eq20170504_1} with constant $1$, and then taking the
integration over $(0,t)$, we can obtain for any $t\ge0$,
\begin{align*}
\int_{\Omega}u(t)dx=\int_{\Omega}u(0)dx.
\end{align*}
Then choosing $v=u(t), w=1$ in \eqref{eq20170504_14}, we have for any
$t\ge0$,
\begin{align*}
\int_{\Omega}P_hu(t)~\rd x=\int_{\Omega}u(t)~\rd x.
\end{align*}
Choosing $v_h=1$ in \eqref{eq20170504_11}, then
\begin{align*}
\int_{\Omega}u_h^n\,\rd x=\int_{\Omega}u_h^{n-1}\,\rd x =
\cdots=\int_{\Omega}u_h^{0}\,\rd x.
\end{align*}
Therefore, if choosing $u_h^{0}=P_hu(0)$, then
\begin{align*}
\int_{\Omega}u_h^n\,\rd x &= \int_{\Omega}u_h^{0}\,\rd x 
=\int_{\Omega}P_hu(0)\,\rd x\\
&=\int_{\Omega}u(0)\,\rd x =\int_{\Omega}u(t_n)\,\rd x
=\int_{\Omega}P_hu(t_n)\,\rd x.
\end{align*}
Hence, $P_hu(t_n)-u_h^n \in \mathring{S}^h_E$.
\end{proof}

\subsection{Generalized coercivity result, piecewise $L^\infty(H^{-1})$ and
  $L^2(H^1)$ error estimates}
We first cite the generalized coercivity result, piecewise
$L^{\infty}(H^{-1})$ and $L^2(H^1)$ error estimates established in
\cite{li2017error}.  

\begin{theorem}[Generalized coercivity] \label{thm3.7_add}
Suppose there exists a positive number $\gamma_3>0$ such that the
solution $u$ of problem \eqref{eq20170504_1}--\eqref{eq20170504_5} and
elliptic operator $P_h$ satisfy
\begin{equation} \label{eq20180819_4} 
\|u-P_h u\|_{L^{\infty}((0,T);L^{\infty})} \leq C_1 h
\epsilon^{-\gamma_3}.
\end{equation}
Then there exists an $\eps$-independent and $h$-independent constant $C>0$
such that for $\eps\in(0,\eps_0)$, a.e. $t\in [0,T]$, and for any $\psi\in
\mathring{S}_E^h$,
\begin{equation*}
(\epsilon-\epsilon^4)(\nabla\psi,\nabla\psi)_h+
\frac{1}{\epsilon}(f'(P_hu(t))\psi,\psi)_h\geq
-C\|\nabla\Delta^{-1}\psi\|_{L^2}^2-C\epsilon^{-2\gamma_2-4}h^4,
\end{equation*}
provided that $h$ satisfies the constraint
\begin{align}\label{eq3.24b}
h &\leq (C_1C_2)^{-1}\eps^{\gamma_3+3},  
\end{align} 
where $\gamma_2 = 2\gamma_1 + \sigma_1 + 6$ and $C_2$ is determined by  
$$
C_2:=\max_{|\xi|\le \|u\|_{L^\infty((0,T); L^\infty)}}|f{''}(\xi)|.
$$
\end{theorem}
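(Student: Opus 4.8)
The plan is to transfer the continuous spectrum estimate of Lemma~\ref{lem3.4} to the nonconforming discrete function $\psi\in\mathring{S}^h_E$, treating two independent perturbations: the replacement of $f'(u)$ by $f'(P_hu)$, and the nonconformity of the Morley function $\psi$. I would first dispose of the coefficient replacement. Writing
\[
\tfrac1\epsilon(f'(P_hu)\psi,\psi)_h=\tfrac1\epsilon(f'(u)\psi,\psi)_h+\tfrac1\epsilon\big((f'(P_hu)-f'(u))\psi,\psi\big)_h,
\]
the mean value theorem together with the hypothesis \eqref{eq20180819_4} and the definition of $C_2$ gives $|f'(P_hu)-f'(u)|\le C_2\|u-P_hu\|_{0,\infty}\le C_1C_2\,h\epsilon^{-\gamma_3}$, so the last term is bounded in absolute value by $\frac{C_1C_2 h}{\epsilon^{1+\gamma_3}}\|\psi\|_{0,2}^2$; under the constraint \eqref{eq3.24b} this is at most $\epsilon^2\|\psi\|_{0,2}^2$. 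The remaining task is thus a \emph{discrete spectrum estimate with the exact coefficient $f'(u)$}.

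Second, I would pass to a conforming companion $\tilde\psi:=\mathring{\E_h}\psi\in\mathring{\widetilde S_E^h}\subset H^1(\Omega)\cap L_0^2(\Omega)$ and apply Lemma~\ref{lem3.4} to $\tilde\psi$, obtaining $\frac1\epsilon(f'(u)\tilde\psi,\tilde\psi)\ge-\epsilon\|\nabla\tilde\psi\|_{L^2}^2-C_0\|\nabla\Delta^{-1}\tilde\psi\|_{L^2}^2$. Setting $e:=\psi-\tilde\psi$, the lower-order discrepancies are harmless: the difference $(f'(u)\psi,\psi)_h-(f'(u)\tilde\psi,\tilde\psi)=(f'(u)e,\psi+\tilde\psi)_h$ is controlled through the $L^2$ approximation bound $\|e\|_{0,2}\le Ch^2|\psi|_{2,2,h}$ from \eqref{eq20180524_2}, and the $H^{-1}$ discrepancy $\|\nabla\Delta^{-1}\tilde\psi\|_{L^2}^2-\|\nabla\Delta^{-1}\psi\|_{L^2}^2$ is handled similarly in the dual norm defined in \eqref{eq7_add}. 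In both of these no inverse estimate is forced upon us, so genuine powers of $h$ survive.

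The crux, and the step I expect to be the main obstacle, is reconciling the \emph{gradient} term. The spectrum estimate spends $\epsilon\|\nabla\tilde\psi\|_{L^2}^2$, whereas the left-hand side only offers $(\epsilon-\epsilon^4)(\nabla\psi,\nabla\psi)_h$; I must bound $\epsilon\|\nabla\tilde\psi\|_{L^2}^2$ above by $(\epsilon-\epsilon^4)|\psi|_{1,2,h}^2$ up to admissible remainders. This is delicate precisely because the enriching error in the broken seminorm, $|e|_{1,2,h}\le Ch|\psi|_{2,2,h}$, is only of the \emph{same order} as $|\psi|_{1,2,h}$ itself: the inverse estimate $|\psi|_{2,2,h}\le Ch^{-1}|\psi|_{1,2,h}$ exactly cancels the factor $h$, so the cross term $(\nabla\tilde\psi,\nabla e)_h$ in $\|\nabla\tilde\psi\|^2-|\psi|_{1,2,h}^2$ cannot be absorbed by a naive triangle inequality. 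Here I would exploit the special structure of the Morley element rather than brute force: integrate by parts elementwise to rewrite $(\nabla\tilde\psi,\nabla e)_h=-\sum_K\int_K\Delta\tilde\psi\,e+\sum_K\int_{\partial K}\frac{\partial\tilde\psi}{\partial n}e\,\rd S$, and apply Lemma~\ref{lem20180817_1} to the inter-element boundary sum, using that the Morley jumps of $\psi$ have vanishing moments (continuity at vertices and at edge midpoints) to recover the extra powers of $h$ that the inverse estimate would otherwise consume.

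Finally, I would collect the residues. The $f'$-replacement bound $\epsilon^2\|\psi\|_{0,2}^2$ and the various cross terms are converted by the interpolation inequality $\|\psi\|_{0,2}^2\le C|\psi|_{1,2,h}\|\nabla\Delta^{-1}\psi\|_{L^2}+\big|\sum_K\int_{\partial K}\psi\,\frac{\partial w}{\partial n}\,\rd S\big|$ with $w=\Delta^{-1}\psi$, obtained by testing $\|\psi\|_{0,2}^2=(\psi,\Delta w)$, integrating by parts, and bounding the nonconforming boundary contribution again by Lemma~\ref{lem20180817_1}. A weighted Young's inequality then splits each resulting product into a piece $\le\frac12\epsilon^4|\psi|_{1,2,h}^2$, absorbed by the $-\epsilon^4(\nabla\psi,\nabla\psi)_h$ slack deliberately built into the coefficient, and a piece proportional to $\|\nabla\Delta^{-1}\psi\|_{L^2}^2$, which lands on the right-hand side. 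The accumulated powers of $h$, weighted by the a priori $\epsilon$-bounds encoded in $\gamma_2$, assemble into the $C\epsilon^{-2\gamma_2-4}h^4$ remainder, and the mesh constraint \eqref{eq3.24b} is exactly what keeps the net coefficient of $(\nabla\psi,\nabla\psi)_h$ nonnegative throughout these exchanges.
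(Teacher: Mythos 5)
First, a point of reference: the paper does not prove this theorem at all --- it is explicitly cited from \cite{li2017error} (``We first cite the generalized coercivity result \dots established in [li2017error]''), so your outline can only be compared with the argument that statement must encode. Several of your steps are sound and surely part of that argument: the mean-value swap of $f'(P_hu)$ for $f'(u)$, whose admissible size $C_1C_2h\epsilon^{-1-\gamma_3}\|\psi\|_{0,2}^2\le \epsilon^2\|\psi\|_{0,2}^2$ is exactly what the constraint \eqref{eq3.24b} and the definition of $C_2$ are tailored to (modulo the small caveat that the mean-value theorem needs $|f''|$ on a range covering the values of $P_hu$ as well, i.e.\ Corollary \ref{cor20180818_1}); the passage to the conforming companion $\mathring{\E_h}\psi$ and the application of Lemma \ref{lem3.4}; the control of the zeroth-order and $H^{-1}$ discrepancies via \eqref{eq20180524_2}; and the nonconforming interpolation $\|\psi\|_{0,2}^2\le C|\psi|_{1,2,h}\|\nabla\Delta^{-1}\psi\|_{L^2}+Ch^2|\psi|_{1,2,h}^2$ obtained from Lemma \ref{lem20180817_1}.

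The genuine gap sits exactly at the step you yourself flag as the crux, and your proposed fix does not close it for arbitrary $\psi\in\mathring{S}_E^h$. Writing $e=\psi-\mathring{\E_h}\psi$, elementwise integration by parts plus Lemma \ref{lem20180817_1} bounds the cross term $(\nabla\psi,\nabla e)_h$ by quantities of the form $Ch^2|\psi|_{2,2,h}^2+Ch\,\|\psi\|_{1,2,h}\|\psi\|_{2,2,h}$ (using $\|e\|_{0,2}\le Ch^2|\psi|_{2,2,h}$ and $\|e\|_{2,2,h}\le C\|\psi\|_{2,2,h}$); the vanishing vertex/midpoint moments of the Morley jumps are already fully priced into Lemma \ref{lem20180817_1}, so no further powers of $h$ can be extracted from them. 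For a genuinely oscillatory $\psi$ one has $|\psi|_{2,2,h}\simeq h^{-1}|\psi|_{1,2,h}$, so these terms are of size $C|\psi|_{1,2,h}^2$ with an $O(1)$ constant; multiplied by $\epsilon$, they can be absorbed by the $\epsilon^4(\nabla\psi,\nabla\psi)_h$ slack only if $\epsilon$ is bounded below, not for all $\eps\in(0,\eps_0)$. The resolution is visible in the statement itself: the exponent $\gamma_2$ in the remainder $C\epsilon^{-2\gamma_2-4}h^4$ is precisely the exponent of the a priori broken-$H^2$ bound $\|u_h^n\|_{2,2,h}\le C\epsilon^{-\gamma_2}$ (cf.\ \eqref{eq20180212_1} and Theorem 3.14 of \cite{li2017error}). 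The argument must invoke such a bound on the admissible $\psi$ --- in the application $\psi=\theta^n=P_hu-u_h^n$, for which $|\psi|_{2,2,h}\le C\epsilon^{-\gamma_2}$ by Corollary \ref{cor20180818_1} and the bound on $u_h^n$ --- so that every enrichment discrepancy becomes a $\psi$-free quantity of size $\epsilon^{-2\gamma_2}h^2$ or $\epsilon^{-2\gamma_2}h^4$ times explicit $\epsilon$-powers, which a weighted Young's inequality then packages into $C\epsilon^{-2\gamma_2-4}h^4$ plus pieces absorbed by the $\epsilon^4$ slack. Your outline never explains where $\gamma_2$ would come from --- a telltale sign that this $H^2$ a priori control, the actual engine that makes the enrichment errors small, is missing; without it, the crux step of your plan fails.
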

\begin{remark} Thanks to the Sobolev embedding theorem and
\eqref{eq20180818_1}, we have 
\begin{equation} \label{eq20180819_5} 
\|u - P_hu\|_{0,\infty} \leq \|u - P_hu\|_{2,2,h} \leq
Ch \epsilon^{-2}\rho_4^{\frac12}(\epsilon), 
\end{equation} 
which gives the explicit formulation of $\gamma_3$ in
\eqref{eq20180819_4}.
\end{remark}

\begin{theorem}[Piecewise $L^\infty(H^{-1})$ and $L^2(H^1)$ error
estimates]\label{thm20171007_1}
Assume $u$ is the solution of
\eqref{eq20170504_1}--\eqref{eq20170504_5}, $u_h^n$ is the numerical
solution of scheme \eqref{eq20170504_11}--\eqref{eq20170504_12}.
Under the mesh constraints in Theorem 3.15 in \cite{li2017error}, we
have the following error estimate
\begin{align*}
&\frac{1}{4}\|\nabla
\widetilde\Delta_h^{-1}\theta^{\ell}\|_{0,2,h}^2 +
\frac{k^2}{4}\sum_{n=1}^\ell\|\nabla
\widetilde\Delta_h^{-1}d_t\theta^n\|_{0,2,h}^2 +
\frac{\epsilon^4k}{16}\sum_{n=1}^\ell(\nabla\theta^n,\nabla\theta^n)_h\\
& \qquad + \frac{k}{\epsilon}\sum_{n=1}^\ell\|\theta^n\|_{0,4,h}^4\leq
C(\tilde{\rho}_0(\epsilon) |\ln h| h^2 +
\tilde{\rho}_1(\epsilon) k^2),
\end{align*}
where $\tilde{\rho}_0(\epsilon)$ and $\tilde{\rho}_1(\epsilon)$ are
polynomial $\frac1\epsilon$-dependent functions and
$\widetilde{\Delta}_h^{-1}$ is a discrete inverse Laplace operator
defined in \cite{li2017error}. 
\end{theorem}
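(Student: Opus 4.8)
The plan is to follow a negative-norm (discrete $H^{-1}$) energy argument, the key idea being to test the error equation with $\widetilde{\Delta}_h^{-1}\theta^n$ so that the leading term produces a telescoping discrete $H^{-1}$ energy rather than an $L^2$ energy. First I would derive the error equation: evaluate the weak form \eqref{eq20180211_1} at $t=t_n$, subtract the scheme \eqref{eq20170504_11}, and decompose $u(t_n)-u_h^n=\rho^n+\theta^n$. The whole purpose of the elliptic operator $P_h$ defined in \eqref{eq20170504_14} is to cancel the bilinear-form and linearized-nonlinearity contributions of $\rho^n$, so that after this cancellation the error equation for $\theta^n\in\mathring S_E^h$ (Lemma \ref{lem20180409_1}) reads, schematically, $(d_t\theta^n,v_h)+\epsilon a_h(\theta^n,v_h)+\tfrac1\epsilon(\nabla(f(u(t_n))-f(u_h^n)),\nabla v_h)_h=(\text{consistency terms})$, where the consistency terms collect the time-truncation error $u_t(t_n)-d_tu(t_n)$, the term $d_t\rho^n$, and the projection remainders of the nonlinearity.

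The test function is $v_h=\widetilde{\Delta}_h^{-1}\theta^n$. Since $\widetilde{\Delta}_h^{-1}$ is self-adjoint and positive (giving a discrete analogue of \eqref{eq7_add}, namely $(\phi,\widetilde{\Delta}_h^{-1}\psi)_h=(\nabla\widetilde{\Delta}_h^{-1}\phi,\nabla\widetilde{\Delta}_h^{-1}\psi)_h$), the time-derivative term obeys the identity $2k(d_t\theta^n,\widetilde{\Delta}_h^{-1}\theta^n)=\|\nabla\widetilde{\Delta}_h^{-1}\theta^n\|_{0,2,h}^2-\|\nabla\widetilde{\Delta}_h^{-1}\theta^{n-1}\|_{0,2,h}^2+k^2\|\nabla\widetilde{\Delta}_h^{-1}d_t\theta^n\|_{0,2,h}^2$. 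Summing over $n$ telescopes to the first two left-hand terms of the claim, with the initial contribution vanishing because $\theta^0=P_hu(t_0)-u_h^0=0$. For the bilinear term, using $\Delta\widetilde{\Delta}_h^{-1}\theta^n\approx-\theta^n$ in the discrete sense and integrating by parts elementwise, $\epsilon k\sum_n a_h(\theta^n,\widetilde{\Delta}_h^{-1}\theta^n)$ reduces to $\epsilon k\sum_n(\nabla\theta^n,\nabla\theta^n)_h$ up to interelement terms controlled by Lemma \ref{lem20180817_1}, furnishing the piecewise $L^2(H^1)$ contribution.

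The main obstacle is the cubic nonlinearity. After integrating by parts and using $\Delta\widetilde{\Delta}_h^{-1}\theta^n\approx-\theta^n$, the nonlinear term becomes $\tfrac1\epsilon(f(u(t_n))-f(u_h^n),\theta^n)_h$; splitting off the projection part $f(u(t_n))-f(P_hu(t_n))$ (a $\rho$-term), the identity $f(P_hu)-f(u_h)=f'(P_hu)\theta-3P_hu\,\theta^2+\theta^3$ for $f(s)=s^3-s$ yields a dangerous piece $\tfrac{k}\epsilon\sum_n(f'(P_hu)\theta^n,\theta^n)_h$, a cross term $-\tfrac{3k}\epsilon\sum_n(P_hu,(\theta^n)^3)_h$, and the nonnegative quartic $\tfrac{k}\epsilon\sum_n\|\theta^n\|_{0,4,h}^4$. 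Here I would invoke the generalized coercivity of Theorem \ref{thm3.7_add}: it bounds $(\epsilon-\epsilon^4)(\nabla\theta^n,\nabla\theta^n)_h+\tfrac1\epsilon(f'(P_hu)\theta^n,\theta^n)_h$ from below by $-C\|\nabla\widetilde{\Delta}_h^{-1}\theta^n\|_{0,2,h}^2-C\epsilon^{-2\gamma_2-4}h^4$. This borrows $(\epsilon-\epsilon^4)(\nabla\theta^n,\nabla\theta^n)_h$ from the bilinear term, leaving exactly the $\epsilon^4(\nabla\theta^n,\nabla\theta^n)_h$ coefficient appearing in the claim; the quartic term is retained on the left, the cross term is absorbed into the quartic and lower-order terms by Young's inequality and $\|P_hu\|_{0,\infty}\le C$ (Corollary \ref{cor20180818_1}), and the spectral lower bound is precisely what converts the would-be exponential factor $e^{Ct/\epsilon}$ into a polynomial one, the $-C$-constant contributing only a harmless $\|\nabla\widetilde{\Delta}_h^{-1}\theta^n\|_{0,2,h}^2$ term that discrete Gronwall can handle.

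Finally I would bound the consistency terms, pairing each against $v_h=\widetilde{\Delta}_h^{-1}\theta^n$ and estimating by Cauchy--Schwarz in the discrete $H^{-1}$ pairing so that the resulting $\|\nabla\widetilde{\Delta}_h^{-1}\theta^n\|_{0,2,h}$ factors are absorbed. The time-truncation error contributes the $\tilde\rho_1(\epsilon)k^2$ term through the $\|u_{tt}\|$ regularity of Theorems \ref{prop2.1}--\ref{thm20180609_1}, while $d_t\rho^n$ and the nonlinear projection remainders are controlled in the relevant norms by Theorem \ref{thm20180818_1}, producing the $\tilde\rho_0(\epsilon)|\ln h|h^2$ term (the logarithm entering through the same discrete Sobolev inequality used in Theorem \ref{thm20180817_1}). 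A discrete Gronwall inequality, together with summation by parts in time (Lemma \ref{lem20180515_1}) where the nonlinear or projection terms carry a time difference, then closes the estimate. The delicate point throughout is the simultaneous bookkeeping of $\epsilon$-powers so that the final constant $C$ and the mesh constraints stay as stated; the complete verification is carried out in \cite{li2017error}.
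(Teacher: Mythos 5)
This theorem is not proved in the paper at all: the authors import it verbatim from \cite{li2017error} (``We first cite the generalized coercivity result, piecewise $L^\infty(H^{-1})$ and $L^2(H^1)$ error estimates established in \cite{li2017error}''), so there is no internal proof to compare yours against. Your reconstruction does follow the strategy of that reference (which in turn follows the Feng--Prohl template): test the error equation with the discrete inverse Laplacian of $\theta^n$ to telescope a discrete $H^{-1}$ energy, keep the quartic term $\frac{k}{\epsilon}\sum_n\|\theta^n\|_{0,4,h}^4$ on the left, and use the generalized coercivity of Theorem \ref{thm3.7_add} --- borrowing $(\epsilon-\epsilon^4)(\nabla\theta^n,\nabla\theta^n)_h$ so that exactly the $\epsilon^4$ gradient coefficient survives --- to make the Gronwall rate $\epsilon$-independent. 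Your identity $f(P_hu)-f(u_h)=f'(P_hu)\theta-3P_hu\,\theta^2+\theta^3$ is correct, $\theta^0=0$ because $u_h^0=P_hu(t_0)$ is correct, and the $|\ln h|$ provenance is right. (One small convention slip: with \eqref{eq7_add}, $(\Phi,v)_{H^{-1}}=(\Phi,-\Delta^{-1}v)$, the test function must be $-\widetilde\Delta_h^{-1}\theta^n$ for your time-difference identity to carry the stated signs.)

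The one place your sketch would actually fail as written is the cross term $-\frac{3k}{\epsilon}\sum_n(P_hu,(\theta^n)^3)_h$. Absorbing it ``by Young's inequality and $\|P_hu\|_{0,\infty}\le C$'' leaves a residue of the form $\frac{C}{\epsilon}\|\theta^n\|_{0,2}^2$, and the only way to feed that to Gronwall through the $H^{-1}$ energy is $\|\theta\|_{0,2}^2\le C\|\nabla\widetilde\Delta_h^{-1}\theta\|_{0,2,h}\,|\theta|_{1,2,h}$ followed by absorbing the gradient factor into the retained $\epsilon^4|\theta^n|_{1,2,h}^2$; this makes the Gronwall coefficient of order $\epsilon^{-6}$ and reproduces exactly the exponential dependence $e^{C\epsilon^{-6}T}$ that the theorem exists to avoid. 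What the cited proofs do instead is estimate the cubic term by the two-dimensional interpolation $\|\theta\|_{0,3}^3\le C\|\nabla\widetilde\Delta_h^{-1}\theta\|_{0,2,h}\,|\theta|_{1,2,h}^2$ (plus the quartic term), so that the dangerous factor multiplying $|\theta|_{1,2,h}^2$ is $\frac{C}{\epsilon}\|\nabla\widetilde\Delta_h^{-1}\theta\|_{0,2,h}$ itself; one then runs an induction (continuation) argument showing $\|\nabla\widetilde\Delta_h^{-1}\theta^n\|_{0,2,h}\lesssim\epsilon^5$ at each step, which is precisely where the mesh constraints of Theorem 3.15 in \cite{li2017error} --- which your statement assumes but your argument never uses --- enter. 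So your outline names the right ingredients, but the bootstrap on the smallness of the discrete $H^{-1}$ error is not an optional bookkeeping detail deferred to \cite{li2017error}; it is the step without which the claimed polynomial dependence on $\frac{1}{\epsilon}$ does not follow.
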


\subsection{$L^\infty(L^2)$ and piecewise $L^2(H^2)$ error estimates}
Based on Theorem \ref{thm20171007_1}, the $L^\infty(L^2)$ and piecewise $L^2(H^2)$ error estimates which
depend on $\frac{1}{\epsilon}$ polynomially, instead of
exponentially, are derived below. Notice that the Theorem
\ref{thm20171007_1} is used to circumvent the use of interpolation
of $\|\cdot\|_{1,2,h}$ between $\|\cdot\|_{0,2,h}$ and
$\|\cdot\|_{2,2,h}$, by which only the exponential dependence can be
derived.

\begin{theorem}\label{thm20180611_add_1}
Assume $u$ is the solution of
\eqref{eq20170504_1}--\eqref{eq20170504_5}, $u_h^n$ is the numerical
solution of scheme \eqref{eq20170504_11}--\eqref{eq20170504_12}.
Under the mesh constraints in Theorem 3.15 in \cite{li2017error} and \eqref{mesh_cond},
the following $L^\infty(L^2)$ and piecewise $L^2(H^2)$ error estimates hold
\begin{align} \label{eq20180820_1}
&~\quad
\|\theta^{\ell}\|_{0,2,\Omega}^2+k\sum_{n=1}^\ell\|d_t\theta^{n}\|_{0,2,\Omega}^2
+ \epsilon k\sum_{n=1}^{\ell}a_h(\theta^n,\theta^n)\\
& \leq C\tilde{\rho}_2(\epsilon) |\ln h|^2 h^2 +
C\tilde{\rho}_3(\epsilon) |\ln h| k^2,
\notag
\end{align}
where 
$$ 
\begin{aligned}
\tilde{\rho}_2(\epsilon) &:= \epsilon^{4}\rho_3(\epsilon) +
\epsilon^{-2\sigma_1-6}\rho_4(\epsilon) + \rho_5(\epsilon) +
\epsilon^{-5}\tilde{\rho}_0(\epsilon) + 
\epsilon^{-2\gamma_1 - 2\gamma_2 - 2}\tilde{\rho}_0(\epsilon), \\
\tilde{\rho}_3(\epsilon) &:= \rho_3(\epsilon) +
\epsilon^{-5}\tilde{\rho}_1(\epsilon) + \epsilon^{-2\gamma_1 -
  2\gamma_2 - 2}\tilde{\rho}_1(\epsilon).
\end{aligned}
$$ 
\end{theorem}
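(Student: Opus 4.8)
The plan is to run a discrete energy argument on $\theta^n$, testing the error equation with $\theta^n$ itself, and to keep every constant polynomial in $\frac1\epsilon$ by feeding in the already-established low-order bounds of Theorem \ref{thm20171007_1} rather than by interpolating intermediate norms. First I would set up the error equation. From the definition \eqref{eq20170504_14} of $P_h$ and the identity $\epsilon\Delta^2u-\frac1\epsilon\Delta f(u)=-u_t$ coming from \eqref{eq20170504_1}, the projected continuous equation is $(u_t(t_n),v_h)+\epsilon a_h(P_hu(t_n),v_h)+\frac1\epsilon(f'(u)\nabla P_hu(t_n),\nabla v_h)_h=\alpha(\rho^n,v_h)$. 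Subtracting the scheme \eqref{eq20170504_11} and writing $u_t(t_n)-d_tu_h^n=d_t\theta^n+R^n+d_t\rho^n$ with time-truncation error $R^n:=u_t(t_n)-d_tu(t_n)$ gives
\[(d_t\theta^n,v_h)+\epsilon a_h(\theta^n,v_h)+\frac1\epsilon(f'(u)\nabla P_hu-f'(u_h^n)\nabla u_h^n,\nabla v_h)_h=\alpha(\rho^n,v_h)-(R^n+d_t\rho^n,v_h).\]

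I would take $v_h=\theta^n$, multiply by $k$, and sum over $n=1,\dots,\ell$; since $u_h^0=P_hu(t_0)$ forces $\theta^0=0$ (Lemma \ref{lem20180409_1}), the first term contributes $\frac12\|\theta^\ell\|_{0,2}^2+\frac12\sum_n\|\theta^n-\theta^{n-1}\|_{0,2}^2$ and the biharmonic term contributes $\epsilon k\sum_n a_h(\theta^n,\theta^n)$, i.e. exactly the three quantities on the left of \eqref{eq20180820_1}, plus the nonlinear contribution and the right-hand pairings.

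The crux is the nonlinear term. Using $f'(\xi)=3\xi^2-1$ and $u_h^n=P_hu-\theta^n$, I would expand
\[f'(u)\nabla P_hu-f'(u_h^n)\nabla u_h^n=3\rho^n(u+P_hu)\nabla P_hu+f'(P_hu)\nabla\theta^n+(6P_hu\,\theta^n-3(\theta^n)^2)\nabla u_h^n.\]
Pairing with $\nabla\theta^n$ and summing in time, the decisive piece is the indefinite $\frac k\epsilon\sum_n(f'(P_hu)\nabla\theta^n,\nabla\theta^n)_h$. The whole point — and the reason the estimate stays polynomial in $\frac1\epsilon$ — is that I would \emph{not} bound $|\theta^n|_{1,2,h}$ by interpolating between $\|\theta^n\|_{0,2}$ and $|\theta^n|_{2,2,h}$, since after absorption and Gronwall that unavoidably produces an $e^{C/\epsilon}$ factor. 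Instead, $f'(P_hu)\ge-1$ already yields the lower bound $-\frac k\epsilon\sum_n|\theta^n|_{1,2,h}^2$, which is controlled \emph{directly} by the $L^2(H^1)$ bound $\frac{\epsilon^4k}{16}\sum_n(\nabla\theta^n,\nabla\theta^n)_h\le C(\tilde\rho_0|\ln h|h^2+\tilde\rho_1k^2)$ of Theorem \ref{thm20171007_1}, producing the $\epsilon^{-5}\tilde\rho_0$ and $\epsilon^{-5}\tilde\rho_1$ contributions; the sharper handling via the generalized coercivity of Theorem \ref{thm3.7_add}, which trades this term against the second-order form up to a negative-order remainder plus an $O(\epsilon^{-2\gamma_2-4}h^4)$ consistency term, accounts for the $\gamma$-dependent contribution in $\tilde\rho_2$. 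The projection-error piece $\frac1\epsilon(3\rho^n(u+P_hu)\nabla P_hu,\nabla\theta^n)$ and the cubic piece $\frac1\epsilon((6P_hu\,\theta^n-3(\theta^n)^2)\nabla u_h^n,\nabla\theta^n)$ I would bound by Young's inequality, using the $L^\infty$ controls on $u$, $P_hu$, $\nabla P_hu$ from Corollary \ref{cor20180818_1}, the projection estimates of Theorem \ref{thm20180818_1}, and the quartic control $\frac k\epsilon\sum_n\|\theta^n\|_{0,4,h}^4$ from Theorem \ref{thm20171007_1}.

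Finally I would bound the right-hand pairings: $\alpha(\rho^n,\theta^n)$ with $\alpha=\alpha_0\epsilon^{-3}$, and the $\rho^n$, $d_t\rho^n$ terms, via the projection bounds \eqref{eq20180818_1}--\eqref{eq20180818_2} (whence the $\rho_4$, $\rho_3$ and $|\ln h|$ factors), using summation by parts (Lemma \ref{lem20180515_1}) on $\sum_n(d_t\rho^n,\theta^n)$ if convenient; and the truncation pairing via $\|R^n\|_{0,2}^2\le Ck\int_{t_{n-1}}^{t_n}\|u_{tt}\|_{0,2}^2\,\rd s$ together with the new regularity \eqref{eq20180606_2} of Theorem \ref{thm20180609_1}, which furnishes the leading $k^2\rho_3$ term. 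After splitting by Young's inequality and applying a discrete Gronwall inequality — legitimate because the surviving remainder is only $\|\theta^n\|_{H^{-1}}^2\le C\|\theta^n\|_{0,2}^2$ with an $\epsilon$-independent constant — collecting the $h^2$- and $k^2$-contributions gives \eqref{eq20180820_1} with the stated $\tilde\rho_2$, $\tilde\rho_3$. The main obstacle is precisely this polynomial bookkeeping: a direct Young/interpolation treatment of the indefinite nonlinear term is what forces exponential $\frac1\epsilon$-dependence, so success hinges on trading that term against the biharmonic form through the generalized coercivity and on routing the surviving first- and negative-order norms of $\theta^n$ through Theorem \ref{thm20171007_1} instead of absorbing them.
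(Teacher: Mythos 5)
Your architecture is the paper's: the same error equation \eqref{eq20180209_2}, testing with $\theta^n$ and summing, the identical three-way splitting of the nonlinear term (your three pieces are exactly the paper's $J_1$, $J_2$, $J_3$, since $3\theta^n(P_hu+u_h^n)=6P_hu\,\theta^n-3(\theta^n)^2$), routing $|\theta^n|_{1,2,h}$ through Theorem \ref{thm20171007_1} rather than interpolating between $\|\theta^n\|_{0,2}$ and $|\theta^n|_{2,2,h}$, the truncation term via \eqref{eq20180606_2}, and Gronwall at the end. But there is a genuine gap in how you claim the $\gamma$-dependent contributions $\epsilon^{-2\gamma_1-2\gamma_2-2}\tilde{\rho}_0$ and $\epsilon^{-2\gamma_1-2\gamma_2-2}\tilde{\rho}_1$ arise. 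You attribute them to a ``sharper handling'' of the indefinite term $\frac{k}{\epsilon}\sum_n(f'(P_hu)\nabla\theta^n,\nabla\theta^n)_h$ via the generalized coercivity of Theorem \ref{thm3.7_add}. That theorem is not invoked anywhere in this proof: it sits upstream, inside the proof of Theorem \ref{thm20171007_1} in \cite{li2017error}, and its conclusion trades $(\epsilon-\epsilon^4)(\nabla\psi,\nabla\psi)_h+\frac1\epsilon(f'(P_hu)\psi,\psi)_h$ against $-C\|\nabla\Delta^{-1}\psi\|_{L^2}^2-C\epsilon^{-2\gamma_2-4}h^4$, where the sign-indefinite weight multiplies $\psi$, not $\nabla\psi$ --- the wrong structure for your term, and its remainder does not produce $\tilde{\rho}_0,\tilde{\rho}_1$ factors. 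Your first mechanism for that term ($|f'(P_hu)|\le C$ via $\|P_hu\|_{0,\infty}\le C$, then the $L^2(H^1)$ bound, giving $\epsilon^{-5}\tilde{\rho}_0$ and $\epsilon^{-5}\tilde{\rho}_1$) is the paper's and already suffices; the coercivity sentence is a misdiagnosis.

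The $\gamma$-terms in fact come from the cubic piece $J_3=-\frac{3k}{\epsilon}\sum_n(\theta^n(P_hu+u_h^n)\nabla u_h^n,\nabla\theta^n)$, and the toolkit you list cannot close it: Corollary \ref{cor20180818_1} does not provide $\|\nabla P_hu\|_{0,\infty}$, and nothing in your list controls $\nabla u_h^n$ at all, so Young plus the quartic bound $\frac{k}{\epsilon}\sum_n\|\theta^n\|_{0,4,h}^4$ leaves an unbounded factor. The missing ingredient is the a priori bound $\|u_h^n\|_{2,2,h}\le C\epsilon^{-\gamma_2}$ (Theorem 3.14 of \cite{li2017error}) combined with the discrete Sobolev inequality, giving $\|u_h^n\|_{1,\infty,h}\le C|\ln h|^{1/2}\|u_h^n\|_{2,2,h}\le C\epsilon^{-\gamma_2}|\ln h|^{1/2}$ as in \eqref{eq20180212_1}; together with $\|P_hu+u_h^n\|_{0,\infty}\le C\epsilon^{-\gamma_1}$ one pairs $\|\theta^n\|_{0,2}$ against $\|\nabla\theta^n\|_{0,2}$ and re-applies Theorem \ref{thm20171007_1} to the $\|\nabla\theta^n\|_{0,2}^2$ sum --- precisely the source of the exponent $-2\gamma_1-2\gamma_2-2$ and of the extra power of $|\ln h|$ in \eqref{eq20180820_1}. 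With that bound imported, your plan coincides with the paper's proof. (A minor bookkeeping remark: testing with $\theta^n$ yields $\frac12\sum_n\|\theta^n-\theta^{n-1}\|_{0,2}^2=\frac{k^2}{2}\sum_n\|d_t\theta^n\|_{0,2}^2$, not $k\sum_n\|d_t\theta^n\|_{0,2}^2$ as on the left of \eqref{eq20180820_1}; the paper's display \eqref{eq20180801_1} carries the same $k$-versus-$k^2$ slip, so this is not specific to your argument.)
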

\begin{proof}
It follows from \eqref{eq20170504_11}, \eqref{eq20170504_14}, and \eqref{eq20170504_15} that for any $v_h\in S^h_E$,
\begin{align}\label{eq20180209_2}
&(d_t\theta^n,v_h)+\epsilon a_h(\theta^n,v_h)\\
=&~ [(d_tP_hu,v_h)+\epsilon a_h(P_hu,v_h)]-[(d_tu_h^n,v_h)+\epsilon a_h(u_h^n,v_h)]\notag\\
=& -(d_t\rho^n,v_h)+(u_t+\epsilon\Delta^2u-\frac{1}{\epsilon}\Delta f(u)+\alpha u,v_h)+(R^n(u_{tt}),v_h)\notag\\
&-\frac{1}{\epsilon}(f'(u)\nabla P_hu,\nabla v_h)_h-\alpha(P_hu,v_h)+\frac{1}{\epsilon}(\nabla f(u_h^{n}),\nabla v_h)_h\notag\\
=&~(-d_t\rho^n+\alpha\rho^n,v_h)-\frac{1}{\epsilon}(f'(u)\nabla P_hu-\nabla f(u_h^n),\nabla v_h)_h\notag\\
&+(R^n(u_{tt}),v_h), \notag
\end{align}
where the remainder 
\begin{equation} \label{eq20180801_2} 
R^n(u_{tt}):= \frac{u(t_n) - u(t_{n-1})}{k} - u_t(t_n) =
-\frac{1}{k}\int^{t_n}_{t_{n-1}}(s-t_{n-1})u_{tt}(s)\,\rd s.
\end{equation}
Choosing $v_h=\theta^n$, taking summation over $n$ from $1$ to $\ell$,
multiplying $k$ on both sides of \eqref{eq20180209_2}, we have  
\begin{align} \label{eq20180801_1}
& ~\quad \frac{1}{2}\|\theta^\ell\|_{0,2}^2 +
\frac{k}{2}\sum_{n=1}^\ell \|d_t \theta^n\|_{0,2}^2 + \epsilon k
\sum_{n=1}^\ell a_h(\theta^n, \theta^n) \\
&= k\sum_{n=1}^\ell (-d_t\rho^n+\alpha\rho^n,\theta^n) -
\frac{k}{\epsilon}\sum_{n=1}^\ell (f'(u)\nabla
P_hu-\nabla f(u_h^n),\nabla \theta^n)_h\notag\\
&~\quad+k\sum_{n=1}^\ell(R^n(u_{tt}),\theta^n):= I_1 + I_2 + I_3. \notag
\end{align}
     
\noindent\underline{Estimate of $I_1$:} The first term on the right hand side
of \eqref{eq20180209_2} can be bounded by
\begin{align}\label{eq20180211_5}
I_1 &= k\sum_{n=1}^\ell (-d_t\rho^n+\alpha\rho^n,\theta^n)\\
&\le Ck \sum_{n=1}^\ell \|d_t\rho^n\|_{0,2}^2 + Ck\sum_{n=1}^\ell 
\alpha^2 \|\rho^n\|_{0,2}^2 + Ck \sum_{n=1}^\ell
\|\theta^n\|_{0,2}^2 \notag\\
&\le C (\epsilon^4\rho_3(\epsilon) + \epsilon^{-6}\rho_4(\epsilon)) h^2
+ C\rho_5(\epsilon)|\ln h|h^2 +Ck\sum_{n=1}^\ell \|\theta^n\|_{0,2}^2,\notag
\end{align}
where by \eqref{eq20180818_1} and \eqref{eq20180818_2}
\begin{align}
k \sum_{n=1}^\ell \|d_t \rho^n\|_{0,2}^2 & = \frac{1}{k}
\sum_{n=1}^\ell \|\int_{t_{n-1}}^{t_n} \rho_t \,\rd s\|_{0,2}^2 
\le \sum_{n=1}^\ell \int_{t_{n-1}}^{t_n} \|\rho_t \|_{0,2}^2 \,\rd s
\label{eq20180818_add1}\\
& \leq \int_0^T \|\rho_t\|_{0,2}^2 \,\rd s \leq
C\epsilon^4\rho_3(\epsilon)h^2 + C \rho_5(\epsilon)|\ln h|h^2, \notag \\  
k \sum_{n=1}^\ell \alpha^2 \|\rho^n\|_{0,2}^2 & \le C \epsilon^{-6}
\supn \|\rho^n\|_{0,2}^2 \leq C\epsilon^{-6}\rho_4(\epsilon)h^2.
\label{eq20180818_add2}
\end{align}

\noindent\underline{Estimate of $I_2$:} The second term on the right
hand side of \eqref{eq20180801_1} can be written as
\begin{align}\label{eq20180209_3}
&-\frac{k}{\epsilon} \sum_{n=1}^\ell (f'(u)\nabla P_hu-\nabla
    f(u_h^n),\nabla\theta^n)_h\\
=&-\frac{k}{\epsilon}\sum_{n=1}^\ell (f'(u)\nabla P_hu-f'(P_hu)\nabla
    P_hu,\nabla\theta^n)_h \notag \\ 
&- \frac{k}{\epsilon}\sum_{n=1}^\ell(\nabla f(P_hu)-
      f'(P_hu)\nabla u_h^n,\nabla\theta^n)_h \notag\\
&-\frac{k}{\epsilon}\sum_{n=1}^\ell (f'(P_hu)\nabla u_h^n-\nabla
    f(u_h^n),\nabla\theta^n)_h := J_1 + J_2 + J_3. \notag 
\end{align}

By \eqref{eq2.5}, \eqref{eq20180818_1} and mesh condition \eqref{mesh_cond},
we have 
$$ 
\|\nabla P_hu\|_{0,2}^2 \leq \|\nabla u\|_{0,2}^2 + C \leq \epsilon^{-2\sigma_1 - 1}.
$$ 
Then, using \eqref{eq20180819_5} and the piecewise $L^2(H^1)$ error estimate 
given in Theorem \ref{thm20171007_1}, the first term on the right-hand side
of \eqref{eq20180209_3} can be bounded below
\begin{align}
J_1 &= -\frac{3k}{\epsilon}\sum_{n=1}^\ell (\rho^n(u+P_hu)\nabla P_hu,
\nabla\theta^n)_h \label{eq20180211_5_add}\\
& \leq \frac{Ck}{\epsilon}\sum_{n=1}^\ell \|u+P_hu\|_{0,\infty}^2
\|\rho^n\|_{0,\infty}^2 \|\nabla P_h u\|_{0,2}^2 +
\frac{Ck}{\epsilon}\sum_{n=1}^\ell (\nabla \theta^n, \nabla
\theta^n)_h \notag \\
& \leq C \epsilon^{-2\sigma_1-6}\rho_4(\epsilon) h^2 +  
C\epsilon^{-5}\tilde{\rho}_0(\epsilon){|\ln h|}h^2 +
C\epsilon^{-5}\tilde{\rho}_1(\epsilon)k^2. \notag
\end{align}

Again, thanks to the piecewise $L^2(H^1)$ error estimate given in Theorem
\ref{thm20171007_1}, the second term on the right-hand side of
\eqref{eq20180209_3} can be written as
\begin{align}\label{eq20180209_4}
J_2 & = -\frac{k}{\epsilon}\sum_{n=1}^\ell (f'(P_hu)\nabla
    \theta^n,\nabla\theta^n)_h \leq \frac{Ck}{\epsilon}\sum_{n=1}^\ell
(\nabla \theta^n, \nabla \theta^n)_h \\ 
&\leq C\epsilon^{-5}\tilde{\rho}_0(\epsilon){|\ln h|}h^2 +
C\epsilon^{-5}\tilde{\rho}_1(\epsilon)k^2. \notag
\end{align}

By the discrete Sobolev inequality and Theorem 3.14 in
\cite{li2017error}, we have for any $n$,
\begin{align}\label{eq20180212_1}
\|u_h^n\|_{1,\infty,h} \le C|\ln h|^{\frac12}\|u_h^n\|_{2,2,h} 
\le C\epsilon^{-\gamma_2}|\ln h|^{\frac12}.
\end{align}
Then, the third term on the right-hand side of \eqref{eq20180209_3}
can be bounded by
\begin{align}\label{eq20180213_1}
J_3 & = -\frac{3k}{\epsilon} \sum_{n=1}^\ell (\theta^n(P_hu + u_h)
    \nabla u_h^n,\nabla\theta^n) \\ 
& \leq Ck \sum_{n=1}^\ell \|\theta^n\|_{0,2}^2 + \frac{Ck}{\epsilon^2}
\sum_{n=1}^\ell \|P_hu + u_h^n\|_{0,\infty}^2 \|u_h^n\|_{1,\infty,h}^2
\|\nabla \theta^n\|_{0,2}^2 \notag \\
& \leq Ck \sum_{n=1}^\ell \|\theta^n\|_{0,2}^2 + C\epsilon^{-2\gamma_1
- 2\gamma_2 - 2} |\ln h| k\sum_{n=1}^\ell \|\nabla \theta^n\|_{0,2}^2
\notag \\
& \leq Ck \sum_{n=1}^\ell \|\theta^n\|_{0,2}^2 + C\epsilon^{-2\gamma_1
- 2\gamma_2 - 2} (\tilde{\rho}_0(\epsilon){ |\ln h|^2}h^2 +
\tilde{\rho}_1(\epsilon)|\ln h|k^2 ).\notag
\end{align}

\noindent\underline{Estimate of $I_3$:} The third term on the right
hand side of \eqref{eq20180209_2} can be bounded by
\begin{align}\label{eq20180517_12}
I_3 = k\sum_{n=1}^\ell (R^n(u_{tt}),\theta^n)
& \le Ck \sum_{n=1}^\ell \|R^n(u_{tt})\|_{0,2}^2 + Ck\sum_{n=1}^\ell
\|\theta^n\|_{0,2}^2 \\ 
& \le C\rho_3(\epsilon)k^2 + Ck\sum_{n=1}^\ell \|\theta^n\|_{0,2}^2
\notag,
\end{align}
where by \eqref{eq20180606_2} and \eqref{eq20180801_2},
\begin{align}
k\sum_{n=1}^{\ell}\|R^n(u_{tt})\|_{0,2}^2 
&\leq \frac{1}{k}\sum_{n=1}^{\ell} \Bigl(\int^{t_n}_{t_{n-1}}(s-t_{n-1})^2\,\rd s\Bigr)
\Bigl(\int^{t_n}_{t_{n-1}}\|u_{tt}(s)\|_{0,2}^2\,\rd s\Bigr)\label{eq20180606_11}\\
&\leq C\rho_3(\eps)k^2.\notag
\end{align}

\noindent{\underline{$L^\infty(L^2)$  and piecewise $L^2(H^2)$ error estimates:}} Taking
\eqref{eq20180211_5}, \eqref{eq20180211_5_add}, \eqref{eq20180209_4},
\eqref{eq20180213_1}, \eqref{eq20180213_1} into \eqref{eq20180801_1},
we have 
\begin{align}\label{eq20180209_4_add}
& \quad ~ \frac{1}{2} \|\theta^{\ell}\|_{0,2}^2 + \frac{k}{2}
\sum_{n=1}^\ell\|d_t\theta^{n}\|_{0,2}^2 + \epsilon
k\sum_{n=1}^{\ell}a_h(\theta^n,\theta^n) \\
&\le Ck\sum_{n=1}^{\ell} \|\theta^n\|_{0,2}^2 \notag \\ 
&~~~ + C(\epsilon^{4}\rho_3(\epsilon) +
    \epsilon^{-2\sigma_1-6}\rho_4(\epsilon) ) h^2 
\notag\\
&~~~ + C(\rho_5(\epsilon) +
\epsilon^{-5}\tilde{\rho}_0(\epsilon))|\ln h| h^2
+ \epsilon^{-2\gamma_1 - 2\gamma_2 -
    2}\tilde{\rho}_0(\epsilon)|\ln h|^2h^2 \notag \\ 
&~~~ 
+ C(\rho_3(\epsilon) + \epsilon^{-5}\tilde{\rho}_1(\epsilon))k^2
+ C\epsilon^{-2\gamma_1 - 2\gamma_2 -
  2}\tilde{\rho}_1(\epsilon)|\ln h|k^2. \notag
\end{align}
The desired result \eqref{eq20180820_1} is therefore obtained by the Gronwall's
inequality.
\end{proof}

\subsection{Piecewise $L^\infty(H^2)$ and $L^\infty(L^\infty)$ error estimates}
In this subsection, we give the $\|\theta^\ell\|_{2,2,h}^2$ estimate
by taking the summation by parts in time and integration by parts in
space, and using the special properties of the Morley element. The
$\|\theta^\ell\|_{2,2,h}^2$ estimate below is ``almost'' optimal with
respect to time and space.

\begin{theorem}\label{thm20180214_4}
Assume $u$ is the solution of
\eqref{eq20170504_1}--\eqref{eq20170504_5}, $u_h^n$ is the numerical
solution of scheme \eqref{eq20170504_11}--\eqref{eq20170504_12}.
Under the mesh constraints in Theorem 3.15 in \cite{li2017error} and
\eqref{mesh_cond}, the following piecewise $L^\infty(H^2)$ error
estimate holds
\begin{align}
&\quad ~ k\sum_{n=1}^{\ell}\|d_t\theta^n\|_{L^2}^2 
+\epsilon k^2 \sum_{n=1}^{\ell}a_h(d_t\theta^n, d_t\theta^n)
  +\epsilon\|\theta^\ell\|_{2,2,h}^2 \label{eq20180821_1}\\
&\le C\tilde{\rho}_4(\epsilon) |\ln h|^2 h^2 
+ C\tilde{\rho}_5(\epsilon)|\ln h| k^2, \notag
\end{align}
\end{theorem}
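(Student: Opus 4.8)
The plan is to derive an energy identity for the time-difference quotient $d_t\theta^n$ by testing the error equation \eqref{eq20180209_2} with $d_t\theta^n$ rather than $\theta^n$. Taking $v_h = d_t\theta^n$ and summing over $n$ from $1$ to $\ell$ with weight $k$, the bilinear term $\epsilon a_h(\theta^n, d_t\theta^n)$ telescopes: since $a_h(\cdot,\cdot)$ is symmetric, $k\sum_n a_h(\theta^n, d_t\theta^n)$ produces $\tfrac{1}{2}a_h(\theta^\ell,\theta^\ell)$ plus a nonnegative sum of $a_h(d_t\theta^n, d_t\theta^n)$ terms, which is precisely how the $\epsilon\|\theta^\ell\|_{2,2,h}^2$ and $\epsilon k^2\sum a_h(d_t\theta^n, d_t\theta^n)$ quantities on the left-hand side of \eqref{eq20180821_1} arise (using that $a_h$ controls $|\cdot|_{2,2,h}^2$ up to lower-order terms). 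The time-derivative term $(d_t\theta^n, d_t\theta^n)$ directly gives $k\sum_n\|d_t\theta^n\|_{L^2}^2$. The right-hand side then consists of the data term $(-d_t\rho^n+\alpha\rho^n, d_t\theta^n)$, the truncation remainder $(R^n(u_{tt}), d_t\theta^n)$, and the nonlinear term $-\tfrac{1}{\epsilon}(f'(u)\nabla P_hu - \nabla f(u_h^n), \nabla d_t\theta^n)_h$.

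First I would handle the linear and remainder terms exactly as in Theorem \ref{thm20180611_add_1}: Cauchy--Schwarz each pairing against $d_t\theta^n$, absorb the $\|d_t\theta^n\|_{L^2}^2$ factors into the left-hand side by choosing constants small, and invoke the bounds \eqref{eq20180818_1}, \eqref{eq20180818_2}, \eqref{eq20180606_11} for $\|d_t\rho^n\|_{0,2}$, $\|\rho^n\|_{0,2}$, and $\|R^n(u_{tt})\|_{0,2}$. The main obstacle, and the reason this theorem is subtle, is the nonlinear term tested against $\nabla d_t\theta^n$: naively bounding it would force a factor of $|d_t\theta^n|_{1,2,h}$ that cannot be absorbed, since the left-hand side controls only the $L^2$ and (telescoped) $H^2$ norms of $d_t\theta^n$, not its $H^1$ norm at each step. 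This is where summation by parts in time (Lemma \ref{lem20180515_1}) is essential: I would rewrite $-\tfrac{k}{\epsilon}\sum_n(g^n, \nabla d_t\theta^n)_h$, where $g^n := f'(u)\nabla P_hu - \nabla f(u_h^n)$, by shifting the difference operator off $\theta$ and onto $g^n$, producing boundary-in-time terms $(g^\ell,\nabla\theta^\ell)_h - (g^0,\nabla\theta^0)_h$ plus a sum $\sum_n(d_t g^n, \nabla\theta^{n-1})_h$. The boundary term $(g^\ell, \nabla\theta^\ell)_h$ is then controlled using the $L^\infty(L^2)$ and $L^2(H^2)$ bounds from Theorem \ref{thm20180611_add_1}, possibly after integrating by parts in space to move a derivative.

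The second key technique is integration by parts in space together with the special Morley-element estimate Lemma \ref{lem20180817_1}: when bounding terms like $(g^\ell, \nabla\theta^\ell)_h$ one would rather see $(\div g^\ell\text{-type}, \theta^\ell)_h$ to trade the $\nabla\theta^\ell$ (controlled only through $a_h$, i.e.\ at the $H^2$ level) for $\|\theta^\ell\|_{0,2}$ (controlled by Theorem \ref{thm20180611_add_1}); the element-boundary discrepancies generated by this piecewise integration by parts are exactly what Lemma \ref{lem20180817_1} bounds by $Ch(\cdots)$, keeping the $h$-power optimal. The decomposition $g^n = (f'(u)-f'(P_hu))\nabla P_hu + f'(P_hu)\nabla\theta^n + (f'(P_hu)\nabla u_h^n - \nabla f(u_h^n))$ from \eqref{eq20180209_3} is reused, with the nonlinear factors bounded via Corollary \ref{cor20180818_1} ($\|P_hu\|_{0,\infty}\le C$), the discrete Sobolev inequality \eqref{eq20180212_1} for $\|u_h^n\|_{1,\infty,h}$, and the interpolation/enriching estimates \eqref{eq20171006_1}, \eqref{eq20180524_2}.

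Finally I would collect all terms, absorb every quantity carrying $\|d_t\theta^n\|_{L^2}^2$, $\epsilon a_h(d_t\theta^n,d_t\theta^n)$, or $\epsilon|\theta^\ell|_{2,2,h}^2$ into the left-hand side, track the resulting $\frac1\epsilon$-powers so that they assemble into the definitions of $\tilde\rho_4(\epsilon)$ and $\tilde\rho_5(\epsilon)$, and apply the discrete Gronwall inequality to close the estimate and produce \eqref{eq20180821_1}. I expect the bookkeeping of $\epsilon$-powers through the summation-by-parts boundary terms to be the most delicate part, since that is precisely where the choice between optimal and sub-optimal order (cf.\ Remark \ref{rmk20180823_1}) is decided; the conceptual crux is the simultaneous use of summation by parts in time and integration by parts in space to avoid ever needing an $H^1$-in-space bound on $d_t\theta^n$.
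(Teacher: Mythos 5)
Your proposal follows essentially the same route as the paper's proof: testing \eqref{eq20180209_2} with $v_h = k\,d_t\theta^n$, telescoping $\epsilon a_h(\theta^n, d_t\theta^n)$ to produce the left-hand side of \eqref{eq20180821_1}, bounding the data and truncation terms by absorption exactly as in Theorem \ref{thm20180611_add_1}, and treating the nonlinear term by the simultaneous use of summation by parts in time (Lemma \ref{lem20180515_1}) and piecewise integration by parts in space with the special Morley edge estimates, while reusing the $L^\infty(L^2)$ and piecewise $L^2(H^2)$ bounds to control the boundary-in-time terms. The only cosmetic differences are the order of the two operations (the paper integrates by parts in space first via the DG jump--average identity, then sums by parts in time within each resulting piece, invoking Lemmas 2.2 and 2.6 of \cite{elliott1989nonconforming} for the edge terms rather than Lemma \ref{lem20180817_1}), and your concluding Gronwall step is unnecessary, since after absorption all remaining $\theta^n$ sums are already controlled by Theorem \ref{thm20180611_add_1}.
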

where
\begin{align*}
\tilde{\rho}_4(\epsilon)& = \epsilon^{-2\sigma_1 - 1}\rho_3(\epsilon)
  + \epsilon^{-4}\rho_0(\epsilon)\rho_4(\epsilon) +
  \epsilon^{-2\sigma_1 - 5}\rho_5(\epsilon) \\
&~~~+ \Big(\epsilon^{-4\gamma_1-3} + \epsilon^{-4\gamma_2-2} 
+ \epsilon^{-\max\{2\sigma_1+5,2\sigma_3+2\}-2} \notag \\
&\qquad~ + \epsilon^{2\gamma_1-\max\{
 2\sigma_1 + \frac{13}{2}, 2\sigma_{3} + \frac72, 2\sigma_2+4,
 2\sigma_4 \} - 1} \Big) \tilde{\rho}_2(\epsilon),
\\ 
\tilde{\rho}_5(\epsilon)& = \Big(\epsilon^{-4\gamma_1-3} + \epsilon^{-4\gamma_2-2} 
+ \epsilon^{-\max\{2\sigma_1+5,2\sigma_3+2\}-2} \notag \\
&\qquad~ + \epsilon^{2\gamma_1-\max\{
 2\sigma_1 + \frac{13}{2}, 2\sigma_{3} + \frac72, 2\sigma_2+4,
 2\sigma_4 \} - 1} \Big) \tilde{\rho}_3(\epsilon).
\end{align*}
\begin{proof}
Choosing $v_h = \theta^n - \theta^{n-1} = kd_t \theta^n$ in \eqref{eq20180209_2},
taking summation over $n$ from $1$ to $\ell $, we get 
\begin{align}\label{eq20180214_2}
&\quad ~ k \sum_{n=1}^\ell \|d_t\theta^n\|_{L^2}^2 +
\frac{\epsilon}{2} a_h(\theta^\ell, \theta^\ell) + 
\frac{\epsilon k^2}{2} \sum_{n=1}^\ell a_h(d_t\theta^n,d_t\theta^n)\\
&= k\sum_{n=1}^\ell (-d_t\rho^n+\alpha\rho^n,d_t\theta^n) 
-\frac{k}{\epsilon}\sum_{n=1}^\ell (f'(u)\nabla
P_hu-\nabla f(u_h^n),\nabla(d_t\theta^n))_h\notag\\
&\quad + k \sum_{n=1}^\ell (R^n(u_{tt}),d_t\theta^n) := I_1 + I_2 +
I_3. \notag
\end{align}
Here we use the fact that 
\begin{align*}
\epsilon a_h(\theta^n,\theta^n-\theta^{n-1}) =
\frac{\epsilon k^2}{2}a_h(d_t \theta^n, d_t \theta^n) +\frac{\epsilon}{2}a_h(\theta^n,\theta^n)-\frac{\epsilon}{2}a_h(\theta^{n-1},\theta^{n-1}).\notag
\end{align*}

\noindent\underline{Estimates of $I_1$ and $I_3$}: Similar to
\eqref{eq20180211_5}, using \eqref{eq20180818_add1} and
\eqref{eq20180818_add2}, we have 
\begin{align}
I_1 &\le Ck \sum_{n=1}^\ell \|d_t\rho^n\|_{L^2}^2 + Ck\sum_{n=1}^\ell 
\alpha^2 \|\rho^n\|_{L^2}^2 + \frac{k}{8} \sum_{n=1}^\ell
\|d_t \theta^n\|_{L^2}^2 \label{eq20180820_2}\\
&\le C (\epsilon^4\rho_3(\epsilon) + \epsilon^{-6}\rho_4(\epsilon)) h^2
+ C\rho_5(\epsilon)|\ln h|h^2 + \frac{k}{8}\sum_{n=1}^\ell
\|d_t \theta^n\|_{0,2,h}^2.\notag
\end{align}
From \eqref{eq20180517_12} and \eqref{eq20180606_11}, we also obtain
the estimate of $I_3$ below
\begin{align}\label{eq20180820_3}
I_3 = k\sum_{n=1}^\ell (R^n(u_{tt}),d_t\theta^n)
& \le Ck \sum_{n=1}^\ell \|R^n(u_{tt})\|_{L^2}^2 + \frac{k}{8}\sum_{n=1}^\ell
\|d_t\theta^n\|_{0,2}^2 \\ 
& \le C\rho_3(\epsilon)k^2 + \frac{k}{8} \sum_{n=1}^\ell
\|d_t \theta^n\|_{0,2}^2 \notag.
\end{align}

\noindent\underline{Estimate of $I_2$}: Next we bound the more
complicated term $I_2$. Using integration by parts, we have 

\begin{align} \label{eq20180822_1}
I_2 &= -\frac{k}{\epsilon}\sum_{n=1}^\ell (f'(u)\nabla P_hu - \nabla
    f(P_hu), d_t\nabla \theta^n)_h - \frac{k}{\epsilon}\sum_{n=1}^\ell (\nabla( f(P_hu) - f(u_h^n)),
      d_t\nabla \theta^n)_h \\ 
&= -\frac{k}{\epsilon}\sum_{n=1}^\ell (f'(u)\nabla P_hu - \nabla
    f(P_hu), d_t\nabla \theta^n)_h + \frac{k}{\epsilon}\sum_{n=1}^\ell (f(P_hu) - f(u_h^n), d_t
    \Delta \theta^n)_h \notag \\
&~~~ - \frac{k}{\epsilon}\sum_{n=1}^\ell \sum_{E \in \mathcal{E}_h}
(\{f(P_hu) - f(u_h^n)\}, d_t \llbracket\nabla \theta^n\rrbracket )_E
\notag \\
&~~~ - \frac{k}{\epsilon}\sum_{n=1}^\ell \sum_{E\in \mathcal{E}_h}
(\llbracket f(P_hu) - f(u_h^n)\rrbracket, \{\nabla d_t\theta^n\})_E
\notag := J_1 + J_2 + J_3 + J_4.
\end{align}
Here we adopt the standard DG notation and the DG identity, see
\cite[Equ. (3.3)]{arnold2002unified}.  Next we bound $J_1$ to $J_4$
respectively. 

\paragraph{$\bullet$ Estimate of $J_1$}
Using summation by parts in Lemma \ref{lem20180515_1}, we have
\begin{align} \label{eq20180822_5}
J_1 &= \frac{k}{\epsilon}\sum_{n=1}^\ell(d_t (\rho(u+P_hu)\nabla P_hu),
    \nabla\theta^{n-1})_h - \frac{1}{\epsilon}(\rho^\ell(u^\ell +
        P_hu^\ell)\nabla P_hu^\ell, \nabla \theta^\ell )_h.
\end{align}
Thanks to \eqref{eq2.5}, \eqref{eq2.15_add}, \eqref{eq2.18},
\eqref{eq20180818_1}, \eqref{eq20180818_2}, \eqref{eq20180819_1}, and
the piecewise $L^2(H^1)$ estimate in Theorem \ref{thm20171007_1}, the
first term on the right hand side of \eqref{eq20180822_5} can be
bounded by 
\begin{align} \label{eq20180822_2}
& \quad ~ \frac{k}{\epsilon}\sum_{n=1}^\ell(d_t (\rho(u+P_hu)\nabla P_hu),
\nabla\theta^{n-1})_h \\
& \leq \frac{1}{k} \sum_{n=1}^\ell \| \int_{t_{n-1}}^{t_n} (\rho(u+P_hu)\nabla
    P_hu)_t \,\rd s\|_{0,2}^2 + C\epsilon^{-2} k \sum_{n=1}^\ell
|\theta^{n-1}|_{1,2,h}^2 \notag \\
& \leq \esssupT\|\nabla P_h u\|_{0,2}^2
\int_0^T\|\rho_t\|_{0,\infty}^2 \,\rd s +
\esssupT\|\rho\|_{0,\infty}^2 \int_0^T \|\nabla(P_hu)_t\|_{0,2}^2
\,\rd s
\notag \\
& + \esssupT \|\rho\|_{0,\infty}^2 \|\nabla P_hu \|_{0,2}^2 \int_0^T
\|u_t + (P_hu)_t\|_{0,\infty}^2 \,\rd s + C\epsilon^{-2} k \sum_{n=1}^\ell
|\theta^{n-1}|_{1,2,h}^2
\notag \\
& \leq C\epsilon^{-2\sigma_1 - 1}(\rho_3(\epsilon) +
\epsilon^{-4}\rho_5(\epsilon)|\ln h|)h^2 + C
\epsilon^{-4}\rho_0(\epsilon)\rho_4(\epsilon)h^2 
\notag \\
&~~~ + C\epsilon^{-2\sigma_1 - 6 -\max\{2\sigma_1 +
  \frac{13}{2}, 2\sigma_3+ \frac72, 2\sigma_2 + 4,
  2\sigma_4\}}\rho_4(\epsilon) h^2 \notag \\ 
&~~~ + C\epsilon^{-6}\tilde{\rho}_0(\epsilon)|\ln h|h^2 +
C\epsilon^{-6}\tilde{\rho}_1(\epsilon)k^2.
\notag
\end{align}
Thanks to \eqref{eq2.5}, \eqref{eq20180818_1} and the $L^\infty(L^2)$
estimate in Theorem \ref{thm20180611_add_1}, the second term on the
right hand of \eqref{eq20180822_5} can be bounded by 
\begin{align} \label{eq20180822_3}
&\quad ~ - \frac{1}{\epsilon}(\rho^\ell(u^\ell + P_hu^\ell)\nabla
P_hu^\ell, \nabla \theta^\ell )_h \\
& \leq C \epsilon^{-2}\|\rho^l\|_{0,\infty}^2 |P_hu^l|_{1,2,h}^2 +
C\epsilon^{-1}\|\theta\|_{0,2}^2 + \frac{\epsilon}{8}a_h(\theta^l,
    \theta^l) \notag \\
& \leq \notag C\epsilon^{-2\sigma_1 - 7}\rho_4(\epsilon) h^2 +
C\epsilon^{-1}\tilde{\rho}_2(\epsilon)|\ln h|^2 h^2 + 
C\epsilon^{-1}\tilde{\rho}_3(\epsilon)|\ln h|k^2 + 
\frac{\epsilon}{8}a_h(\theta^l, \theta^l).
\end{align}
Combining \eqref{eq20180822_2} and \eqref{eq20180822_3}, simplifying
the coefficients according to the definition of $\rho_i(\epsilon)$ and
$\tilde{\rho}_i(\epsilon)$, we obtain the bound for $J_1$:
\begin{align} \label{eq20180822_4}
J_1 &\leq 
C(\epsilon^{-2\sigma_1 - 1}\rho_3(\epsilon) +
    \epsilon^{-4}\rho_0(\epsilon)\rho_4(\epsilon) +
    \epsilon^{-2\sigma_1-5}\rho_5(\epsilon) + 
    \epsilon^{-1}\tilde{\rho}_2(\epsilon) ) |\ln h|^2 h^2 \\ 
&~~~ + C\epsilon^{-1}\tilde{\rho}_3(\epsilon)|\ln h|k^2 +
\frac{\epsilon}{8}a_h(\theta^l, \theta^l). \notag
\end{align}

\paragraph{$\bullet$ Estimate of $J_2$}
Define $f(P_hu) - f(u_h^n) := M^n\theta^n$, where $M^n$ is given as
$$
M^n := (P_hu(t_n))^2 + P_hu(t_n)u_h^n + (u_h^n)^2 - 1.
$$
Using summation by parts in Lemma \ref{lem20180515_1}, we have
\begin{align} \label{eq20180822_6}
J_2 &=  -\frac{k}{\epsilon}\sum_{n=1}^\ell
(d_t (M^n\theta^n), \Delta \theta^{n-1})_h +
\frac{1}{\epsilon}( M^l \theta^l, \Delta \theta^l)_h \\ 
& \leq \frac{Ck}{\epsilon} \sum_{n=1}^\ell \|d_t(M^n\theta^n)\|_{0,2}
|\theta|_{2,2,h} + \frac{C}{\epsilon}\|M^l\theta^l\|_{0,2}
|\theta^l|_{2,2,h}. \notag
\end{align}
Since $d_t u_h^n = d_t(P_hu^n) - d_t\theta^n$, a direct calculation
shows that  
\begin{align*}
d_t(M^n \theta^n) &= \theta^n d_t M^n + M^{n-1}d_t \theta^n \\
& = M^{n-1}d_t \theta^n + \theta^n(P_h u^{n} + P_h u^{n-1})
  d_t(P_hu^n) \\ 
&~~~ + \theta^n u_h^n d_t(P_hu^n) + \theta^n P_h u^{n-1}d_t(P_hu^n) 
 - \theta^n P_hu^{n-1} d_t \theta^n \\ 
&~~~ + \theta^n(u_h^n+u_h^{n-1})d_t(P_hu^n) - \theta^n(u_h^n +
    u_h^{n-1})d_t\theta^n \\
& = (M^{n-1} -\theta^nP_h u^{n-1} - \theta^n(u_h^n +
      u_h^{n-1}))d_t\theta^n \\
&~~~ + (P_hu^n + 2P_hu^{n-1} + 2u_h^n + u_h^{n-1})\theta^n
d_t(P_hu^n).
\end{align*}
Using the $L^2(H^2)$ error estimate \eqref{eq20180820_1} and the
assumption on the $L^\infty$ bound of $u_h^n$, we get  
\begin{align} \label{eq20180822_7}
& \quad ~\frac{Ck}{\epsilon}\sum_{n=1}^\ell
\|d_t(M^n\theta^n)\|_{0,2}|\theta^n|_{2,2,h} \\
& \leq C\epsilon^{-2\gamma_1 - 1} k\sum_{n=1}^\ell
\|d_t\theta^n\|_{0,2}|\theta^n|_{2,2,h} 
+ C\epsilon^{-\gamma_1 - 1} k\sum_{n=1}^\ell \|\theta^n d_t(P_h
    u)\|_{0,2}|\theta^n|_{2,2,h} \notag \\
& \leq \frac{k}{8}\sum_{n=1}^\ell \|d_t \theta^n\|_{0,2}^2 +
C\epsilon^{-4\gamma_1 - 2} k\sum_{n=1}^\ell |\theta|_{2,2,h}^2 +
C\epsilon^{2\gamma_1}k\sum_{n=1}^\ell \|\theta d_t(P_h u)\|_{0,2}^2
\notag \\
& \leq 
\frac{k}{8}\sum_{n=1}^\ell \|d_t \theta^n\|_{0,2}^2 +
C\epsilon^{-4\gamma_1 - 3}(\tilde{\rho}_2(\epsilon)|\ln h|^2h^2 +
    \tilde{\rho}_3(\epsilon)|\ln h|k^2) \notag \\
&~~~ + 
C \epsilon^{2\gamma_1-\max\{ 2\sigma_1 + \frac{13}{2}, 2\sigma_3+\frac72,
  2\sigma2+4, 2\sigma_4 \}-1}(\tilde{\rho}_2(\epsilon)|\ln h|^2 h^2 +
      \tilde{\rho}_3(\epsilon)|\ln h| k^2), \notag
\end{align}
where by \eqref{eq2.15_add} and 
the $L^\infty(L^2)$ error estimate \eqref{eq20180820_1},
$$ 
\begin{aligned}
&\quad ~k\sum_{n=1}^\ell \|\theta d_t(P_h u)\|_{0,2}^2 \\
 &\leq \supn\|\theta^n\|_{0,2}^2 \frac{1}{k} \|\int_{t_{n-1}}^{t_n}
 (P_hu)_t \,\rd s\|_{0,\infty}^2 \\
& \leq \supn\|\theta^n\|_{0,2}^2 \int_{0}^{T}
\|(P_hu)_t\|_{0,\infty}^2 \,\rd s \\
& \leq C \epsilon^{-\max\{ 
2\sigma_1 + \frac{13}{2}, 2\sigma_3+\frac72, 2\sigma2+4, 2\sigma_4
\}-1}(\tilde{\rho}_2(\epsilon)|\ln h|^2 h^2 +
    \tilde{\rho}_3(\epsilon)|\ln h| k^2).
\end{aligned}
$$ 
And the second term on the right hand side of \eqref{eq20180822_6} can
be bounded by 
\begin{align} \label{eq20180822_8} 
\frac{C}{\epsilon} \|M^l\theta^l\|_{0,2}|\theta^l|_{2,2,h} 
&\leq C^{-4\gamma_1-3}\|\theta^l\|_{0,2}^2 +
\frac{\epsilon}{8}a_h(\theta^l, \theta^l) \\
&\leq C\epsilon^{-4\gamma_1 - 3}(\tilde{\rho}_2(\epsilon)|\ln h|^2 h^2 +
    \tilde{\rho}_3(\epsilon)|\ln h|k^2)
   + \frac{\epsilon}{8}a_h(\theta^l, \theta^l).  \notag 
\end{align} 
Combining \eqref{eq20180822_7} and \eqref{eq20180822_8}, we obtain
the bound for $J_2$:
\begin{align} \label{eq20180822_9}
J_2 &\leq 
\frac{k}{8}\sum_{n=1}^\ell \|d_t \theta^n\|_{0,2}^2 
   + \frac{\epsilon}{8}a_h(\theta^l, \theta^l) +
C\epsilon^{-4\gamma_1 - 3}(\tilde{\rho}_2(\epsilon)|\ln h|^2 h^2 +
    \tilde{\rho}_3(\epsilon)|\ln h|k^2) \\
&~~~ + 
C \epsilon^{2\gamma_1-\max\{ 2\sigma_1 + \frac{13}{2}, 2\sigma_3+\frac72,
  2\sigma_2+4, 2\sigma_4 \}-1}(\tilde{\rho}_2(\epsilon)|\ln h|^2 h^2 +
      \tilde{\rho}_3(\epsilon)|\ln h| k^2). \notag
\end{align}

\paragraph{$\bullet$ Estimate of $J_3$}
Notice that $\theta^n \in S_E^h$ and  
$$ 
\int_{E} \llbracket\nabla \theta^n\rrbracket\,\rd S = 0 \qquad \forall E \in
\mathcal{E}_h.
$$ 
Using summation by parts in Lemma \ref{lem20180515_1}, Lemma 2.2 in
\cite{elliott1989nonconforming} and inverse inequality, we have
\begin{align*}
J_3 &= \frac{k}{\epsilon}\sum_{n=1}^\ell\sum_{E\in \mathcal{E}_h} 
(d_t \{M^n\theta^n\}, \llbracket\nabla \theta^{n-1}\rrbracket)_E -
\frac{1}{\epsilon}\sum_{E\in \mathcal E_h}(\{M^\ell \theta^{\ell}\}, \llbracket\nabla \theta^{\ell}\rrbracket)_E \\
& \leq \frac{Ck}{\epsilon} \sum_{n=1}^\ell \|d_t(M^n\theta^n)\|_{0,2}
|\theta|_{2,2,h} + \frac{C}{\epsilon}\|M^\ell\theta^\ell\|_{0,2}
|\theta^l|_{2,2,h}.
\end{align*} 
Hence, $J_3$ has the same bound as $J_2$.

\paragraph{$\bullet$ Estimate of $J_4$} Since $P_hu$ and $u_h$ are
continuous at vertexes of $\mathcal{T}_h$, thanks to Lemma 2.6 in
\cite{elliott1989nonconforming}, we have 
\begin{align} \label{eq20180823_2}
J_4 & \leq \frac{Ck}{\epsilon} \sum_{n=1}^\ell h|M^n\theta^n|_{2,2,h}
|d_t\theta^n|_{1,2,h}\\
&\leq \frac{Ck}{\epsilon} \sum_{n=1}^\ell |M^n\theta^n|_{2,2,h}
\|d_t\theta^n\|_{0,2} \notag\\
& \leq \frac{Ck}{\epsilon^2} \sum_{n=1}^\ell |M^n\theta^n|_{2,2,h}^2 +
\frac{k}{8}\sum_{n=1}^\ell \|d_t \theta^n\|_{0,2}^2 \notag.
\end{align} 
Using the piecewise $L^2(H^2)$ estimate given in Theorem
\ref{thm20171007_1}, we have 
\begin{align} \label{eq20180823_1}
& \quad ~\frac{Ck}{\epsilon^2} \sum_{n=1}^\ell |M^n\theta^n|_{2,2,h}^2
\\
& \leq \frac{Ck}{\epsilon^2} \sum_{n=1}^\ell  \left( 
\|M^n\|_{0,\infty}^2 |\theta^n|_{2,2,h}^2 +
|M^n|_{1,4,h}^2|\theta^n|_{1,4,h}^2 +
|M^n|_{2,2,h}^2 \|\theta^n\|_{0,\infty}^2 
\right) \notag \\
& \leq \frac{C}{\epsilon^2}\supn
\|M^n\|_{2,2,h}^2 k\sum_{n=1}^\ell \|\theta^n\|_{2,2,h}^2 \notag \\
& \leq C(\epsilon^{-4\gamma_2-2} + \epsilon^{-\max\{2\sigma_1+5,
    2\sigma_3+2\}-2})(\tilde{\rho}_2(\epsilon)|\ln h|^2 h^2 +
      \tilde{\rho}_3(\epsilon)|\ln h|k^2),\notag
\end{align}
where by \eqref{eq2.13_add} and the fact that $\|u_h^n\|_{2,2,h} \leq
C\epsilon^{-\gamma_2}$ (c.f. \cite[Theorem 3.14]{li2017error}) 
$$ 
\begin{aligned}
\|M^n\|_{2,2,h} &\leq C (\|(P_hu^n)^2\|_{2,2,h} + \|u_h^n
P_hu^n\|_{2,2,h} + \|(u_h^n)^2\|_{2,2,h}) \\
& \leq C(\|P_hu^n\|_{2,2,h} + \|P_hu^n\|_{1,4,h}^2 +
    \|u_h\|_{0,\infty}\|u_h^n\|_{2,2,h} + \|u_h^n\|_{1,4,h}^2\\
&~~~ + \|u_h^n\|_{2,2,h} + \|u_h^n\|_{0,\infty}\|P_hu^n\|_{2,2,h} +
\|u_h^n\|_{1,4,h}\|P_hu^n\|_{1,4,h}) \\
& \leq C(\epsilon^{-2\gamma_2} + \epsilon^{-\max\{2\sigma_1+5,
    2\sigma_3 + 2\}}).
\end{aligned}
$$ 

\underline{Piecewise $L^\infty(H^2)$ error estimate}: Taking
\eqref{eq20180820_2}, \eqref{eq20180820_3}, \eqref{eq20180822_4},
\eqref{eq20180822_9} and \eqref{eq20180823_2} into
\eqref{eq20180214_2}, we obtain  
\begin{align}
&\quad ~ \frac{k}{8} \sum_{n=1}^\ell \|d_t\theta^n\|_{L^2}^2 +
\frac{\epsilon}{8} a_h(\theta^\ell, \theta^\ell) + 
\frac{\epsilon k^2}{2} \sum_{n=1}^\ell a_h(d_t\theta^n,d_t\theta^n) \\
& \leq  
C (\epsilon^4\rho_3(\epsilon) + \epsilon^{-6}\rho_4(\epsilon)) h^2
+ C\rho_5(\epsilon)|\ln h|^2 h^2 + C\rho_3(\epsilon)k^2
\notag \\
&~~~ + C(\epsilon^{-2\sigma_1 - 1}\rho_3(\epsilon) +
\epsilon^{-4}\rho_0(\epsilon)\rho_4(\epsilon) +
\epsilon^{-2\sigma_1-5}\rho_5(\epsilon) +
\epsilon^{-1}\tilde{\rho}_2(\epsilon) ) |\ln h|^2 h^2 \notag \\
&~~~ + C\epsilon^{-1}\tilde{\rho}_3(\epsilon)|\ln h|k^2 +
C\epsilon^{-4\gamma_1 - 3}(\tilde{\rho}_2(\epsilon)|\ln h|^2 h^2 +
        \tilde{\rho}_3(\epsilon)|\ln h|k^2) \notag \\
&~~~ + C \epsilon^{-\max\{ 2\sigma_1 + \frac{13}{2},
2\sigma_3+\frac72, 2\sigma_2+4,
2\sigma_4\}-1}(\tilde{\rho}_2(\epsilon)|\ln h|^2 h^2 +
    \tilde{\rho}_3(\epsilon)|\ln h| k^2) \notag \\
&~~~ + C(\epsilon^{-4\gamma_2-2} + \epsilon^{-\max\{2\sigma_1+5,
2\sigma_3+2\}-2})(\tilde{\rho}_2(\epsilon)|\ln h|^2 h^2 +
\tilde{\rho}_3(\epsilon)|\ln h|k^2). \notag
\end{align}
Then the theorem can be proved by simplifying the coefficients
according to the definitions of $\rho_i(\epsilon)$ and
$\tilde{\rho}_i(\epsilon)$.
\end{proof}

\begin{remark} \label{rmk20180823_1}
If the summation by part for time and integration by part for space
techniques are not employed simultaneously, one can only obtain a 
coarse estimate 
\begin{align*}
&\quad ~\|\theta^\ell\|_{2,2,h}^2 + k\sum_{n=1}^{\ell}\|d_t\theta^n\|_{L^2}^2
+\epsilon k^2 \sum_{n=1}^{\ell}a_h(d_t\theta^n, d_t \theta^n) \\
&\le Ck^{-\frac12}(\epsilon^{-\gamma_4}|\ln h|^2h^2+\epsilon^{-\gamma_5}|\ln h|k),
\end{align*}
where $\gamma_4, \gamma_5$ denote some positive constants. 
\end{remark}

Finally, using \eqref{eq20180819_5}, Theorem \ref{thm20180214_4} and
the Sobolev embedding theorem, we can prove the desired
$L^\infty(L^\infty)$ error estimate.

\begin{theorem}\label{thm20180611_add2}
Assume $u$ is the solution of
\eqref{eq20170504_1}--\eqref{eq20170504_5}, $u_h^n$ is the numerical
solution of scheme \eqref{eq20170504_11}--\eqref{eq20170504_12}.
Under the mesh constraints in Theorem 3.15 in \cite{li2017error} and \eqref{mesh_cond}, we
have the $L^\infty(L^\infty)$ error estimate 
\begin{align} \label{eq20180823_3}
\|u(t_n) - u_h^n\|_{L^{\infty}}\le C|\ln
h|^{\frac12}((\tilde{\rho}_4(\epsilon))^{\frac12}|\ln h|^{\frac12}h + (\tilde{\rho}_5(\epsilon))^{\frac12}k)
\quad \forall 1 \leq n \leq \ell.
\end{align}
\end{theorem}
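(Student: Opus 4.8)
The plan is to start from the splitting $u(t_n)-u_h^n=\rho^n+\theta^n$, with $\rho^n=u(t_n)-P_hu(t_n)$ and $\theta^n=P_hu(t_n)-u_h^n$, so that $\|u(t_n)-u_h^n\|_{0,\infty}\le\|\rho^n\|_{0,\infty}+\|\theta^n\|_{0,\infty}$, and then to bound the two pieces by two different mechanisms. For the projection error the bound is already in hand: by \eqref{eq20180819_5}, which is the two-dimensional $H^2$-into-$L^\infty$ Sobolev embedding applied to $\rho^n$, we have $\|\rho^n\|_{0,\infty}\le\|\rho^n\|_{2,2,h}\le Ch\epsilon^{-2}\rho_4^{1/2}(\epsilon)$, uniformly in $n$. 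The real work therefore lies with the discrete part $\theta^n$, for which no continuous embedding is available and which must be controlled through a discrete Sobolev inequality together with the piecewise $L^\infty(H^2)$ estimate of Theorem \ref{thm20180214_4}.

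For $\theta^n$ I would first invoke Lemma \ref{lem20180409_1} to record that $\theta^n\in\mathring{S}_E^h$, so that the Morley-element discrete Sobolev inequality already used in \eqref{eq20180212_1} applies and gives $\|\theta^n\|_{0,\infty}\le\|\theta^n\|_{1,\infty,h}\le C|\ln h|^{1/2}\|\theta^n\|_{2,2,h}$ in two dimensions. Next I would feed in Theorem \ref{thm20180214_4}: since its left-hand side controls $\epsilon\|\theta^\ell\|_{2,2,h}^2$, extracting $\|\theta^n\|_{2,2,h}$ by dividing \eqref{eq20180821_1} by $\epsilon$ and taking square roots yields a bound of the form $\|\theta^n\|_{2,2,h}\le C\epsilon^{-1/2}\big((\tilde\rho_4(\epsilon))^{1/2}|\ln h|\,h+(\tilde\rho_5(\epsilon))^{1/2}|\ln h|^{1/2}k\big)$ valid for every $1\le n\le\ell$. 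Composing the two inequalities produces a bound of the advertised shape $C|\ln h|^{1/2}\big((\tilde\rho_4)^{1/2}|\ln h|^{1/2}h+(\tilde\rho_5)^{1/2}k\big)$ once the remaining $|\ln h|$ and $\epsilon$ factors are collected into the definitions of $\tilde\rho_4$ and $\tilde\rho_5$.

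Finally I would check that the projection contribution is genuinely of lower order, so that it is absorbed into the $\theta^n$ term and does not appear separately in the stated estimate. This is the only place demanding care: one compares $\|\rho^n\|_{0,\infty}\le Ch\epsilon^{-2}\rho_4^{1/2}(\epsilon)$ against the $h$-part of the $\theta^n$ bound, and since $\tilde\rho_4(\epsilon)$ contains the summand $\epsilon^{-4}\rho_0(\epsilon)\rho_4(\epsilon)$ with $\rho_0(\epsilon)\ge C$, one has $(\tilde\rho_4)^{1/2}\gtrsim\epsilon^{-2}\rho_4^{1/2}$, whence $\|\rho^n\|_{0,\infty}$ is dominated by $(\tilde\rho_4)^{1/2}|\ln h|\,h$. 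The main obstacle of the argument is thus not the triangle-inequality splitting but the honest bookkeeping of the powers of $\epsilon$ and $|\ln h|$ when the discrete Sobolev inequality is composed with the $\epsilon$-weighted piecewise $H^2$ estimate; everything else is a direct application of results already established, with the nonconformity of the Morley space handled once and for all by the discrete Sobolev inequality behind \eqref{eq20180212_1}.
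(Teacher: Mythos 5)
Your overall route coincides with the paper's: the paper proves this theorem in a single sentence, combining the splitting $u(t_n)-u_h^n=\rho^n+\theta^n$, the projection bound \eqref{eq20180819_5} for $\rho^n$, a Sobolev embedding for $\theta^n$, and the piecewise $L^\infty(H^2)$ estimate of Theorem \ref{thm20180214_4}. So the architecture of your argument is the intended one, and your final check that the $\rho^n$ contribution is dominated — via the summand $\epsilon^{-4}\rho_0(\epsilon)\rho_4(\epsilon)$ of $\tilde\rho_4(\epsilon)$, so that $(\tilde\rho_4)^{1/2}\gtrsim\epsilon^{-2}\rho_4^{1/2}$ — is correct and is exactly why \eqref{eq20180823_3} carries no separate projection term.

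There is, however, one concrete slip in the bookkeeping. You bound $\|\theta^n\|_{0,\infty}\le\|\theta^n\|_{1,\infty,h}\le C|\ln h|^{1/2}\|\theta^n\|_{2,2,h}$, i.e., you route through the $W^{1,\infty}$-type discrete Sobolev inequality of \eqref{eq20180212_1}. Composed with $\|\theta^n\|_{2,2,h}\le C\epsilon^{-1/2}\bigl(\tilde\rho_4^{1/2}|\ln h|\,h+\tilde\rho_5^{1/2}|\ln h|^{1/2}k\bigr)$ from \eqref{eq20180821_1}, this yields $C\epsilon^{-1/2}\bigl(\tilde\rho_4^{1/2}|\ln h|^{3/2}h+\tilde\rho_5^{1/2}|\ln h|\,k\bigr)$, which is worse than the stated \eqref{eq20180823_3} by a factor $|\ln h|^{1/2}$, and your suggestion that the leftover factors be ``collected into the definitions of $\tilde\rho_4$ and $\tilde\rho_5$'' is not available: those are fixed functions of $\epsilon$ alone (defined in Theorem \ref{thm20180214_4}), so no $h$-dependent factor can be hidden in them. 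The fix is to use the log-free two-dimensional embedding $\|\theta^n\|_{0,\infty}\le C\|\theta^n\|_{2,2,h}$ — the same embedding the paper already invokes to get \eqref{eq20180819_5}; for the nonconforming $\theta^n$ it follows from the enriching operator, e.g. $\|\theta^n\|_{0,\infty}\le\|\E_h\theta^n\|_{0,\infty}+Ch^{-1}\|\theta^n-\E_h\theta^n\|_{0,2}\le C\|\theta^n\|_{2,2,h}$ by \eqref{eq20171006_1}, the conforming $H^2\hookrightarrow L^\infty$ embedding, and an inverse estimate. With that choice the composition lands exactly on $C|\ln h|^{1/2}\bigl(\tilde\rho_4^{1/2}|\ln h|^{1/2}h+\tilde\rho_5^{1/2}k\bigr)$, up to the residual $\epsilon^{-1/2}$ produced by dividing \eqref{eq20180821_1} by $\epsilon$; you were right to flag that factor, which is silently suppressed in the paper's statement as well and is harmless only because $\tilde\rho_4,\tilde\rho_5$ are already large negative powers of $\epsilon$ — but it, too, cannot be argued away by appeal to the stated definitions.
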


\begin{remark}
The mesh constraints in Theorem 3.15 in \cite{li2017error} and \eqref{mesh_cond} can be achieved by 
$h = C\epsilon^{p_1}$ and $k = C\epsilon^{p_2}$ for certain positive $p_1, p_2$. Hence, the $|\ln h| k^2$ decreases 
asymptoticly as $k^2$ when $\epsilon$ goes to zero. 
\end{remark}

\section{Convergence of the Numerical Interface}\label{sec5}
In this section, we prove that the numerical interface defined as the
zero level set of the Morley element interpolation of the solution
$U^n$ converges to the moving interface of the Hele-Shaw problem under
the assumption that the Hele-Shaw problem has a unique global (in
time) classical solution. We first cite the following convergence
result established in \cite{alikakos1994convergence}.

\begin{theorem}\label{thm4.1}
Let $\Omega$ be a given smooth domain and $\Gamma_{00}$ be a smooth
closed hypersurface in $\Omega$. Suppose that the Hele-Shaw problem
starting from $\Gamma_{00}$ has a unique smooth solution
$\bigl(w,\Gamma:=\bigcup_{0\leq t\leq T}(\Gamma_t\times\{t\}) \bigr)$
in the time interval $[0,T]$ such that $\Gamma_t\subseteq\Omega$\
\,for all $t\in[0,T]$.  Then there exists a family of smooth functions
$\{u_{0}^{\epsilon}\}_{0<\epsilon\leq 1}$ which are uniformly bounded
in $\epsilon\in(0,1]$ and $(x,t)\in \overline{\Omega}_T$, such that if
    $u^{\epsilon}$ solves the Cahn-Hilliard problem
    \eqref{eq20170504_1}--\eqref{eq20170504_3}, then
\begin{itemize}
\item[\rm (i)] $\displaystyle{\lim_{\epsilon\rightarrow 0}} 
u^{\eps}(x,t)= \begin{cases}
1 &\qquad \mbox{if}\, (x,t)\in \mathcal{O}\\
-1 &\qquad \mbox{if}\, (x,t)\in \mathcal{I}
\end{cases}
\,\mbox{ uniformly on compact subsets}$,
where $\mathcal{I}$ and $\mathcal{O}$ stand for the ``inside" and ``outside" of $\Gamma$;
\item[\rm (ii)] $\displaystyle{\lim_{\epsilon\rightarrow 0}}
\bigl( \epsilon^{-1} f(u^{\epsilon})-\epsilon\Delta u^{\epsilon} \bigr)(x,t)=-w(x,t)$ uniformly on 
$\overline{\Omega}_T$.
\end{itemize}
\end{theorem}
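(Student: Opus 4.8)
The plan is to prove this cited convergence result by the method of matched asymptotic expansions combined with the $\epsilon$-uniform spectral estimate for the linearized Cahn--Hilliard operator. First I would construct a high-order approximate solution $u_A^\epsilon$ adapted to the given smooth Hele--Shaw solution $(w,\Gamma)$. Let $d(x,t)$ be the signed distance to $\Gamma_t$. In the inner region I would expand $u_A^\epsilon(x,t)=\sum_{j=0}^{N}\epsilon^j U_j(z,x,t)$ in the stretched variable $z=d(x,t)/\epsilon$, with leading profile $U_0(z)=\tanh(z/\sqrt 2)$ solving $U_0''=f(U_0)$ and $U_0(\pm\infty)=\pm1$; in the outer region $u_A^\epsilon=\pm1+O(\epsilon)$. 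Matching the two expansions and imposing the solvability (Fredholm) conditions order by order forces $\Gamma_t$ to obey the Hele--Shaw law: $w$ is harmonic off $\Gamma_t$, satisfies the Gibbs--Thomson condition on $\Gamma_t$, and the normal velocity equals the jump of $\partial_n w$. Truncating at $N$ large, the residual $\mathcal R^\epsilon:=\partial_t u_A^\epsilon+\Delta(\epsilon\Delta u_A^\epsilon-\epsilon^{-1}f(u_A^\epsilon))$ is $O(\epsilon^N)$ in $H^{-1}$, and by construction $\epsilon^{-1}f(u_A^\epsilon)-\epsilon\Delta u_A^\epsilon\to-w$ uniformly. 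I would then set $u_0^\epsilon:=u_A^\epsilon(\cdot,0)$, adjusted by a constant so that the prescribed mean is matched; these are smooth and uniformly bounded.

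The heart of the argument is the error estimate. Writing $e:=u^\epsilon-u_A^\epsilon$, the error solves, in weak form,
\[
e_t+\Delta\bigl(\epsilon\Delta e-\epsilon^{-1}(f(u^\epsilon)-f(u_A^\epsilon))\bigr)=-\mathcal R^\epsilon,\qquad e(\cdot,0)=0.
\]
I would test this against $-\Delta^{-1}e$, exploiting the $H^{-1}$ gradient-flow structure encoded in \eqref{eq7_add}, to obtain
\[
\tfrac{1}{2}\tfrac{d}{dt}\|e\|_{H^{-1}}^2+\epsilon\|\nabla e\|_{L^2}^2+\tfrac{1}{\epsilon}(f'(u_A^\epsilon)e,e)=(\text{higher order})+(\mathcal R^\epsilon,-\Delta^{-1}e),
\]
after splitting the nonlinear difference as $f(u^\epsilon)-f(u_A^\epsilon)=f'(u_A^\epsilon)e+(\text{quadratic and cubic in }e)$. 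The decisive step is to bound the quadratic form $\epsilon\|\nabla e\|_{L^2}^2+\epsilon^{-1}(f'(u_A^\epsilon)e,e)$ from below by $-C\|e\|_{H^{-1}}^2$, using the $\epsilon$-uniform lower bound for the principal eigenvalue of the linearized Cahn--Hilliard operator in the spirit of Lemma \ref{lem3.4} (with the profile $u_A^\epsilon$ in place of $u$). This converts the estimate into $\frac{d}{dt}\|e\|_{H^{-1}}^2\le C\|e\|_{H^{-1}}^2+C\epsilon^{2N-c}$ once the higher-order nonlinear terms are absorbed via Gagliardo--Nirenberg interpolation and the smallness of $e$; Gronwall then yields $\|e(t)\|_{H^{-1}}^2+\epsilon\int_0^t\|\nabla e\|_{L^2}^2\le C\epsilon^{2N-c}$.

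Finally, I would bootstrap this $H^{-1}$ smallness to $L^\infty$. Since $u_A^\epsilon$ satisfies (i) and (ii) directly from its construction, and since $\|u^\epsilon-u_A^\epsilon\|$ is smaller than any fixed power of $\epsilon$ by choosing $N$ large, the limit (i) on compact subsets of the inside $\mathcal I$ and outside $\mathcal O$ of $\Gamma$, and the limit (ii) uniformly on $\overline{\Omega}_T$, both follow. \textbf{The main obstacle} is the spectral estimate: establishing the $\epsilon$-independent lower bound $\lambda_{CH}\ge-C_0$ for the linearization about the moving profile is the genuinely hard analytic input — it is the content of Chen's spectrum analysis underlying Lemma \ref{lem3.4} — and it is exactly what prevents the Gronwall constant from blowing up exponentially in $1/\epsilon$. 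A secondary difficulty is carrying the asymptotic expansion to sufficiently high order and controlling the matching region together with the cubic remainder so that the error estimate closes.
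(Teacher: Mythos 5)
The paper does not prove Theorem \ref{thm4.1} at all: it is quoted verbatim from \cite{alikakos1994convergence} (Alikakos--Bates--Chen), as the sentence preceding it makes explicit, so there is no in-paper argument to compare against. Your outline is essentially a faithful reconstruction of that cited proof --- matched asymptotic expansions to build $u_A^\epsilon$ and the family $u_0^\epsilon$, Chen's $\epsilon$-uniform spectral lower bound (the content of Lemma \ref{lem3.4}) applied to the linearization about the approximate profile, an $H^{-1}$ Gronwall estimate with the cubic remainder absorbed by interpolation and a continuation argument, and a bootstrap to $L^\infty$ by taking the expansion order $N$ large --- so it takes the same route as the source on which the paper relies.
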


We are now ready to state the first main theorem of this section.

\begin{theorem}\label{thm4.2}
Let $\{\Gamma_t\}_{t\geq0}$ denote the zero level set of the Hele-Shaw problem 
and  $U_{\epsilon,h,k}(x,t)$ denotes the piecewise linear 
interpolation in time of the numerical solution $u_h^n$, namely,
\begin{align}
U_{\epsilon,h,k}(x,t):=\frac{t-t_{n-1}}{k}u_h^{n}(x)+\frac{t_{n}-t}{k}u_h^{n-1}(x),	\label{eq4.1}
\end{align}
for $t_{n-1}\leq t\leq t_{n}$ and $1\leq n\leq M$.
Then, under the mesh and starting value constraints of Theorem \ref{thm20180214_4} 
and $k=O(h^q)$ with $0<q<1$, we have
\begin{itemize}
\item[\rm (i)] $U_{\epsilon,h,k}(x,t) \stackrel{\eps\searrow 0}{\longrightarrow} 1$ uniformly 
on compact subset of $\mathcal{O}$,
\item[\rm (ii)] $U_{\epsilon,h,k}(x,t) \stackrel{\eps\searrow 0}{\longrightarrow} -1$ uniformly on 
compact subset of $\mathcal{I}$.
\end{itemize}
\end{theorem}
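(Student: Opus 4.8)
The plan is to establish both (i) and (ii) by a triangle-inequality comparison between the fully discrete interpolant $U_{\epsilon,h,k}$ and the exact Cahn--Hilliard solution $u^\epsilon$, and then to invoke the sharp-interface limit of Theorem \ref{thm4.1}. Fix a compact subset $\mathcal{K}$ of $\mathcal{O}$ (the case of $\mathcal{I}$ being identical with limiting value $-1$). For $(x,t)\in\mathcal{K}$ I would write
\[
|U_{\epsilon,h,k}(x,t)-1|\le \|U_{\epsilon,h,k}(\cdot,t)-u^\epsilon(\cdot,t)\|_{L^\infty}+|u^\epsilon(x,t)-1|,
\]
where the second term tends to $0$ uniformly on $\mathcal{K}$ as $\epsilon\searrow0$ by Theorem \ref{thm4.1}(i). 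Everything therefore reduces to showing that the first, purely numerical, term vanishes as $\epsilon\searrow0$ under the stated mesh and starting-value constraints.

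To control that term for an arbitrary $t\in[t_{n-1},t_n]$, I would split the interpolation error into a nodal part and a temporal part. Using the definition \eqref{eq4.1}, write
\[
U_{\epsilon,h,k}-u^\epsilon(t)=\tfrac{t-t_{n-1}}{k}\big(u_h^n-u^\epsilon(t_n)\big)+\tfrac{t_n-t}{k}\big(u_h^{n-1}-u^\epsilon(t_{n-1})\big)+\big(\Pi_k u^\epsilon-u^\epsilon\big)(t),
\]
where $\Pi_k u^\epsilon$ denotes the piecewise-linear-in-time interpolant of $u^\epsilon$. The two nodal differences are convex-combination--weighted, and each is bounded in $L^\infty$ by the estimate \eqref{eq20180823_3} of Theorem \ref{thm20180611_add2} (the $n=0$ endpoint being covered by the initial-projection bound \eqref{eq20180819_5}). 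For the temporal interpolation error I would use the elementary estimate $\|(\Pi_k u^\epsilon-u^\epsilon)(t)\|_{L^\infty}\le\int_{t_{n-1}}^{t_n}\|u_t^\epsilon(s)\|_{L^\infty}\,\rd s\le k\,\esssupT\|u_t^\epsilon\|_{L^\infty}$, and then pass through the two-dimensional embedding $H^2\hookrightarrow L^\infty$ together with the a priori regularity $\esssupT\|u_t\|_{L^2}^2$ from \eqref{eq2.15_add} and $\esssupT\|\Delta u_t\|_{L^2}^2\le C\rho_2(\epsilon)$ from \eqref{eq20180606_1} to conclude $\esssupT\|u_t^\epsilon\|_{L^\infty}\le C\rho_2(\epsilon)^{1/2}$, a fixed polynomial power of $\epsilon^{-1}$.

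Combining the two parts yields
\[
\|U_{\epsilon,h,k}(\cdot,t)-u^\epsilon(\cdot,t)\|_{L^\infty}\le C|\ln h|^{1/2}\big(\tilde{\rho}_4(\epsilon)^{1/2}|\ln h|^{1/2}h+\tilde{\rho}_5(\epsilon)^{1/2}k\big)+Ck\,\rho_2(\epsilon)^{1/2}.
\]
The decisive step — and the place where I expect the genuine work to lie — is verifying that this right-hand side tends to $0$ as $\epsilon\searrow0$. Here the entire architecture of the paper pays off: precisely because $\tilde{\rho}_4,\tilde{\rho}_5,\rho_2$ are \emph{polynomial} in $\epsilon^{-1}$ rather than exponential, and because the mesh and starting-value constraints force $h=C\epsilon^{p_1}$, $k=C\epsilon^{p_2}$ with $p_1,p_2$ large and $k=O(h^q)$, $0<q<1$, each product of a polynomial $\epsilon^{-1}$-factor with a power of $h$ or $k$ carries a strictly positive net power of $\epsilon$, the $|\ln h|$ factors being harmless. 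I would track these exponents explicitly to confirm positivity. Once the numerical error is shown to vanish uniformly in $(x,t)$ on $\mathcal{K}$, the triangle inequality gives $U_{\epsilon,h,k}\to1$ uniformly on compact subsets of $\mathcal{O}$; the mirror argument with Theorem \ref{thm4.1}(i) on $\mathcal{I}$ gives $U_{\epsilon,h,k}\to-1$ uniformly on compact subsets of $\mathcal{I}$, establishing (i) and (ii).
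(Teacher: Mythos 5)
Your proposal is correct and follows essentially the same route as the paper: the triangle inequality splitting $|U_{\epsilon,h,k}-1|\le \|U_{\epsilon,h,k}-u^{\epsilon}\|_{L^\infty}+|u^{\epsilon}-1|$, the $L^\infty(L^\infty)$ error estimate \eqref{eq20180823_3} of Theorem \ref{thm20180611_add2} for the numerical term, and Theorem \ref{thm4.1}(i) for the sharp-interface term, with the mirror argument on $\mathcal{I}$. In fact you are slightly more careful than the paper: since \eqref{eq20180823_3} only controls the error at the nodes $t_n$, your explicit bound on the temporal interpolation error via $k\,\esssupT\|u_t^{\epsilon}\|_{L^\infty}\le Ck\,\rho_2(\epsilon)^{1/2}$ (using \eqref{eq2.15_add} and \eqref{eq20180606_1}) fills a step that the paper's one-line deduction of \eqref{eq4.5}, absorbing $h$ and $k=O(h^q)$ into $h^q|\ln h|$, leaves implicit.
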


\smallskip
\begin{proof}
For any compact set $A\subset\mathcal{O}$ and for any $(x,t)\in A$, we have
\begin{align} \label{eq4.4}
|U_{\epsilon,h,k}-1|&\leq |U_{\epsilon,h,k}-u^{\epsilon}(x,t)|+|u^{\epsilon}(x,t)-1| \\
&\leq |U_{\epsilon,h,k}-u^{\epsilon}(x,t)|_{L^{\infty}(\Omega_T)}+|u^{\epsilon}(x,t)-1|.\nonumber
\end{align}
Theorem \ref{thm20180611_add2} infers that
\begin{equation}\label{eq4.5}
|U_{\epsilon,h,k}-u^{\epsilon}(x,t)|_{L^{\infty}(\Omega_T)}\leq C(\tilde{\rho}_6(\epsilon))^{\frac12}h^q|\ln h|.
\end{equation}
where $\tilde{\rho}_6(\epsilon)=\max\{\tilde{\rho}_4(\epsilon),\tilde{\rho}_5(\epsilon)\}.$

The first term on the right-hand side of \eqref{eq4.4} tends to $0$ when $\epsilon\searrow 0$
(note that $h,k\searrow 0$, too). The second term converges uniformly to $0$ on the compact set $A$, 
which is ensured by (i) of Theorem \ref{thm4.1}.  Hence, the assertion (i) holds.

To show (ii), we only need to replace $\mathcal{O}$ by $\mathcal{I}$ and $1$ by $-1$ in the above proof.
\end{proof}

The second main theorem addresses the convergence 
of numerical interfaces.

\begin{theorem}\label{thm4.3}
Let $\Gamma_t^{\epsilon,h,k}:=\{x\in\Omega;\, U_{\epsilon,h,k}(x,t)=0\}$
be the zero level set of\ \,$U_{\epsilon,h,k}(x,t)$, then under the 
assumptions of Theorem \ref{thm4.2}, we have
\[
\sup_{x\in\Gamma_t^{\epsilon,h,k}} \mbox{\rm dist}(x,\Gamma_t)
\stackrel{\epsilon\searrow 0}{\longrightarrow} 0 \quad\mbox{uniformly on $[0,T]$}.
\]
\end{theorem}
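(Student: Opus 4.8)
The plan is to show that the numerical zero level set cannot escape a fixed tubular neighborhood of $\Gamma$, by exploiting the convergence $U_{\epsilon,h,k}\to\pm1$ away from the interface that was already established in Theorem \ref{thm4.2}. Fix an arbitrary $\delta>0$. It suffices to produce $\epsilon_\delta>0$ such that every $x\in\Gamma_t^{\epsilon,h,k}$ obeys $\mbox{\rm dist}(x,\Gamma_t)<\delta$ for all $t\in[0,T]$ whenever $\epsilon<\epsilon_\delta$, since this gives $\sup_{x\in\Gamma_t^{\epsilon,h,k}}\mbox{\rm dist}(x,\Gamma_t)\le\delta$ uniformly in $t$, and $\delta>0$ was arbitrary.

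First I would localize away from the interface. The map $(x,t)\mapsto\mbox{\rm dist}(x,\Gamma_t)$ is continuous because $\Gamma$ is smooth, so the region $K_\delta:=\{(x,t)\in\overline{\Omega}_T:\mbox{\rm dist}(x,\Gamma_t)\ge\delta\}$ is a closed, hence compact, subset of $\overline{\Omega}_T$. Since every point of $K_\delta$ has positive distance to $\Gamma$, the set $K_\delta$ is disjoint from $\Gamma$ and therefore splits as $K_\delta=(K_\delta\cap\overline{\mathcal{O}})\cup(K_\delta\cap\overline{\mathcal{I}})$, where each piece is closed, hence compact, and is contained in the open region $\mathcal{O}$, resp. $\mathcal{I}$ (it cannot meet $\Gamma=\partial\mathcal{O}\cap\partial\mathcal{I}$). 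On these two compact subsets, part (i) of Theorem \ref{thm4.1} yields $u^\epsilon\to1$ and $u^\epsilon\to-1$ uniformly as $\epsilon\searrow0$.

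Next I would transfer the convergence to the fully discrete interpolant. By the $L^\infty(L^\infty)$ bound \eqref{eq4.5} together with the mesh coupling $h=C\epsilon^{p_1}$, $k=O(h^q)$ with $0<q<1$, we have $\|U_{\epsilon,h,k}-u^\epsilon\|_{L^\infty(\Omega_T)}\le C(\tilde{\rho}_6(\epsilon))^{1/2}h^q|\ln h|\to0$ as $\epsilon\searrow0$. Adding this to the uniform convergence of $u^\epsilon$ to $\pm1$ on $K_\delta$ shows that $U_{\epsilon,h,k}\to\pm1$ uniformly on $K_\delta$. Consequently there exists $\epsilon_\delta>0$ such that $|U_{\epsilon,h,k}(x,t)|\ge\frac12$ for all $(x,t)\in K_\delta$ and all $\epsilon<\epsilon_\delta$. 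In particular $U_{\epsilon,h,k}(\cdot,t)$ has no zero in $K_\delta$, so every $x\in\Gamma_t^{\epsilon,h,k}$ must satisfy $\mbox{\rm dist}(x,\Gamma_t)<\delta$, which is the desired inclusion, uniform in $t$.

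The main obstacle I expect is not the compactness argument, which is routine, but verifying that the error in \eqref{eq4.5} genuinely vanishes: $\tilde{\rho}_6(\epsilon)=\max\{\tilde{\rho}_4(\epsilon),\tilde{\rho}_5(\epsilon)\}$ blows up polynomially in $1/\epsilon$, so the decay $(\tilde{\rho}_6(\epsilon))^{1/2}h^q|\ln h|\to0$ hinges on choosing the exponent $p_1$ in $h=C\epsilon^{p_1}$ large enough, relative to the polynomial degree of $\tilde{\rho}_6$, that the factor $h^q$ dominates. This is precisely where the polynomial (rather than exponential) dependence on $1/\epsilon$ obtained in the earlier error analysis is indispensable, and where the restriction $0<q<1$ together with the mesh and starting-value constraints of Theorem \ref{thm20180214_4} enters. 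A secondary point is the geometric fact that $K_\delta$ remains in the interior, so that the uniform-on-compacta convergence of Theorem \ref{thm4.1} applies; this is guaranteed by $\Gamma_t\subseteq\Omega$ and the smoothness of the Hele-Shaw solution.
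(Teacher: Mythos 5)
Your proposal is correct and takes essentially the same approach as the paper: the paper likewise strips a tubular neighborhood $\mathcal{N}_{\eta}$ off $\Gamma_t$, observes that $A=\mathcal{O}\setminus\mathcal{N}_{\eta}$ and $B=\mathcal{I}\setminus\mathcal{N}_{\eta}$ are compact subsets of $\mathcal{O}$ and $\mathcal{I}$, invokes the uniform convergence $U_{\epsilon,h,k}\to\pm1$ there to get $|U_{\epsilon,h,k}\mp1|\leq\eta<1$, and concludes that any zero of $U_{\epsilon,h,k}$ must lie in $\mathcal{N}_{\eta}$. The only difference is cosmetic: you re-derive the uniform convergence on those compacta from Theorem \ref{thm4.1} together with the error bound \eqref{eq4.5} (including the discussion of why $(\tilde{\rho}_6(\epsilon))^{1/2}h^q|\ln h|\to0$ under the mesh coupling), whereas the paper simply cites Theorem \ref{thm4.2}, which already packages exactly that content.
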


\begin{proof}
For any $\eta\in(0,1)$,  define the tabular neighborhood $\mathcal{N}_{\eta}$ of width 
$2\eta$ of $\Gamma_t$ 
\begin{equation}\label{eq4.8}
\mathcal{N}_{\eta}:=\{(x,t)\in\Omega_T;\, \mbox{\rm dist}(x,\Gamma_t)<\eta\}.
\end{equation}
Let $A$ and $B$ denote the complements of the neighborhood $\mathcal{N}_{\eta}$ in $\mathcal{O}$ 
and $\mathcal{I}$, respectively,
\begin{equation*}
A=\mathcal{O}\setminus\mathcal{N}_{\eta} \qquad\mbox{and}\qquad
B=\mathcal{I}\setminus\mathcal{N}_{\eta}.
\end{equation*}
Note that $A$ is a compact subset outside $\Gamma_t$ and $B$ is a compact subset inside $\Gamma_t$. By Theorem \ref{thm4.2}, there exists ${\epsilon_1}>0$, which only depends on $\eta$, such that 
for any $\epsilon\in (0,{\epsilon_1})$
\begin{align}
&|U_{\epsilon,h,k}(x,t)-1|\leq\eta\quad\forall(x,t)\in A,\label{eq4.9}\\
&|U_{\epsilon,h,k}(x,t)+1|\leq\eta\quad\forall(x,t)\in B.\label{eq4.10}
\end{align}
Now for any $t\in[0,T]$ and $x\in \Gamma_t^{\epsilon,h,k}$, from $U_{\epsilon,h,k}(x,t)=0$ we have
\begin{align}
&|U_{\epsilon,h,k}(x,t)-1|=1\qquad\forall(x,t)\in A,\label{eq4.11}\\
&|U_{\epsilon,h,k}(x,t)+1|=1\qquad\forall(x,t)\in B.\label{eq4.12}
\end{align}
\eqref{eq4.9} and \eqref{eq4.11} imply that $(x,t)$ is not in $A$, and \eqref{eq4.10} 
and \eqref{eq4.12} imply that $(x,t)$ is not in $B$, then $(x,t)$ must lie in the tubular 
neighborhood $\mathcal{N}_{\eta}$. Therefore, for any $\epsilon\in(0,\epsilon_1)$,
\begin{equation}\label{eq4.13}
\sup_{x\in\Gamma_t^{\epsilon,h,k}} \mbox{\rm dist}(x,\Gamma_t) \leq\eta \qquad\mbox{uniformly on $[0,T]$}.
\end{equation}
The proof is complete.
\end{proof}

\section{Numerical experiments}\label{sec6}

In this section, we present two two-dimensional numerical tests 
to gauge the performance of the proposed fully discrete Morley finite
element method for Cahn-Hilliard equation. The square domain $\Omega =
[-1,1]^2$ is used in both tests.

\paragraph{Test 1} Consider the Cahn-Hilliard problem with an ellipse
initial interface determined by $\Gamma_0: \frac{x^2}{0.36} +
\frac{y^2}{0.04} = 0$.  The initial condition is chosen to have the
form $u_0(x,y) = \tanh(\frac{d_0(x, y)}{\sqrt{2\epsilon}})$, where
$d_0(x, y)$ denotes the signed distance from $(x,y)$ to the initial
ellipse interface $\Gamma_0$ and $\tanh(t) = (e^t - e^{-t})/(e^t +
e^{-t})$. 

Figure \ref{fig:ellipse} displays four snapshots at four fixed time
points of the numerical interface with four different $\epsilon$'s.
Here time step size $k = 1\times 10^{-4}$ and space size $h = 0.01$
are used.  They clearly indicate that at each time point the numerical
interface converges to the sharp interface $\Gamma_t$ of the Hele-Shaw
flow as $\epsilon$ tends to zero. Note that this initial condition
may not satisfy the General Assumption (GA) due to the singularity of
the signed distance function. We will adopt a smooth initial condition in the later test.

\begin{figure}[!htbp]
\centering 
\includegraphics[width=0.45\textwidth]{./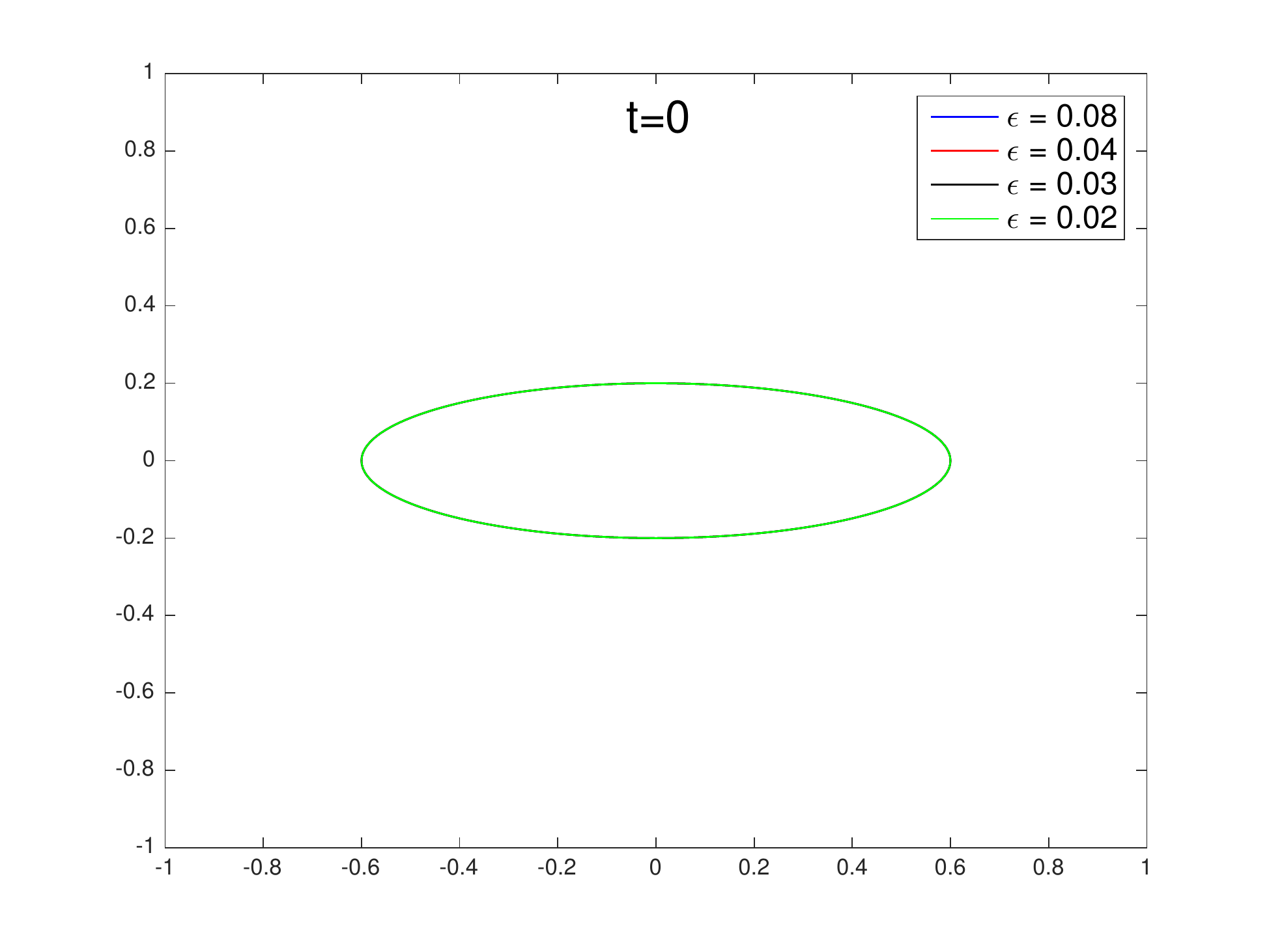} 
\includegraphics[width=0.45\textwidth]{./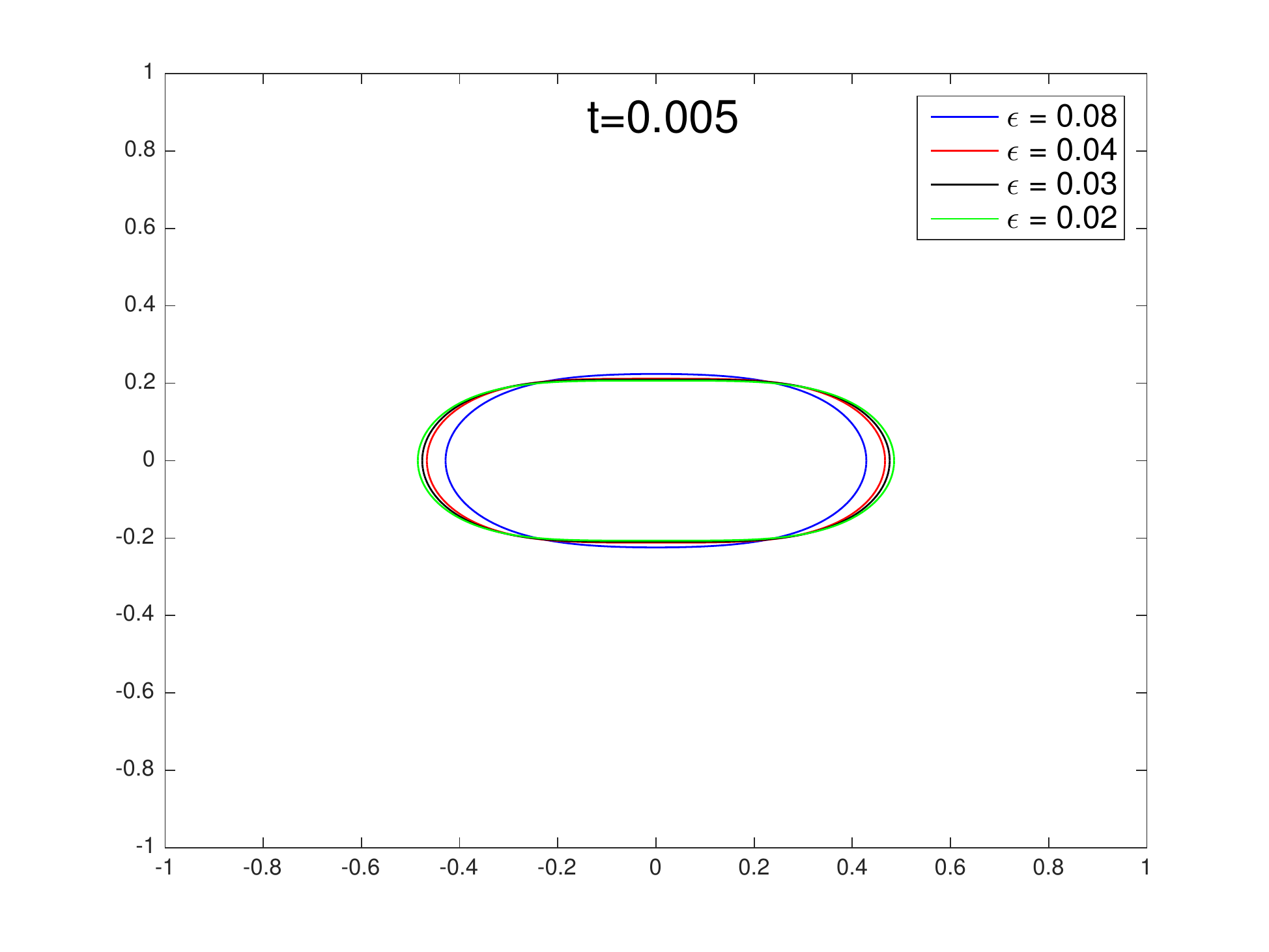} 
\\
\includegraphics[width=0.45\textwidth]{./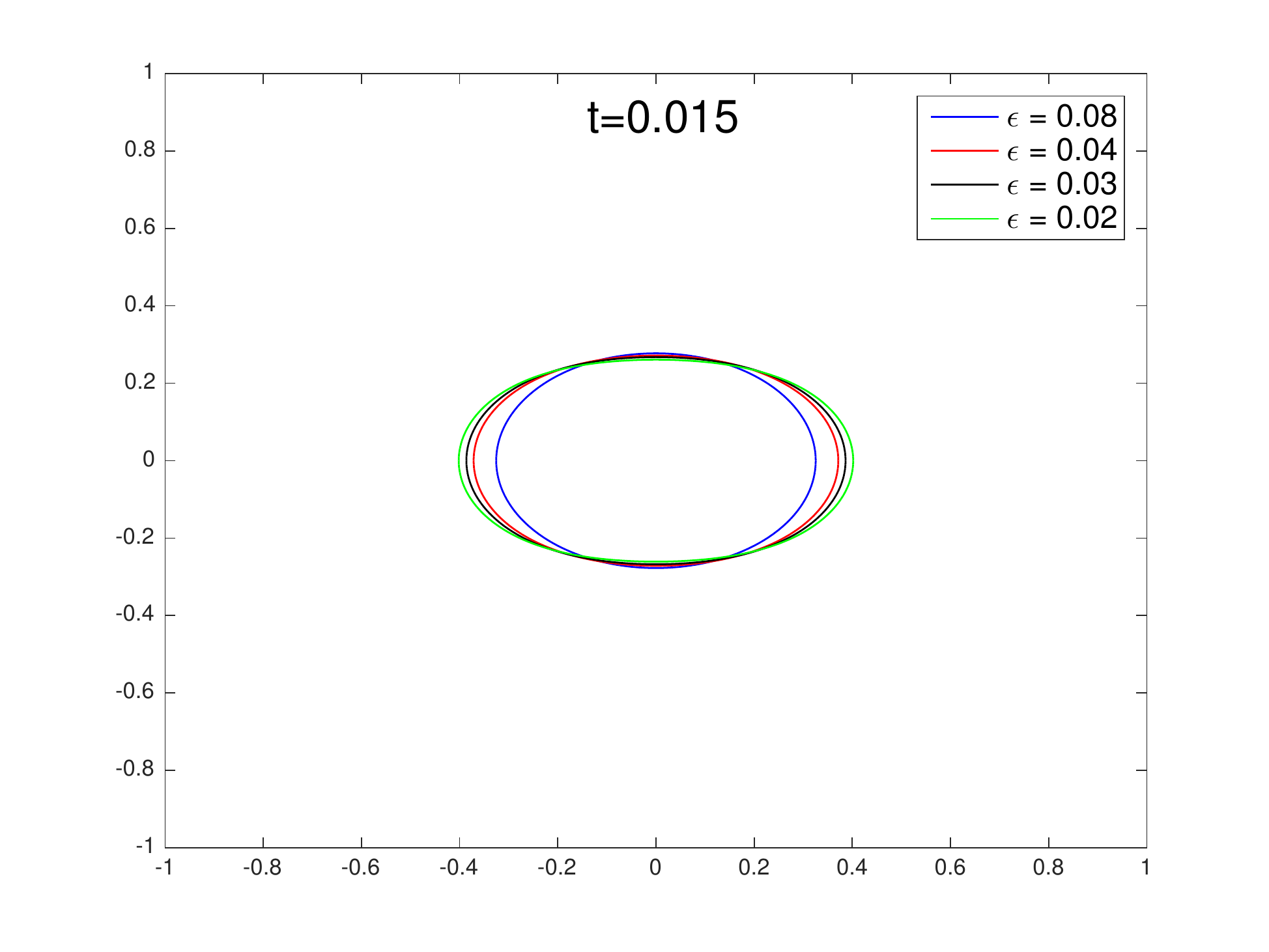} 
\includegraphics[width=0.45\textwidth]{./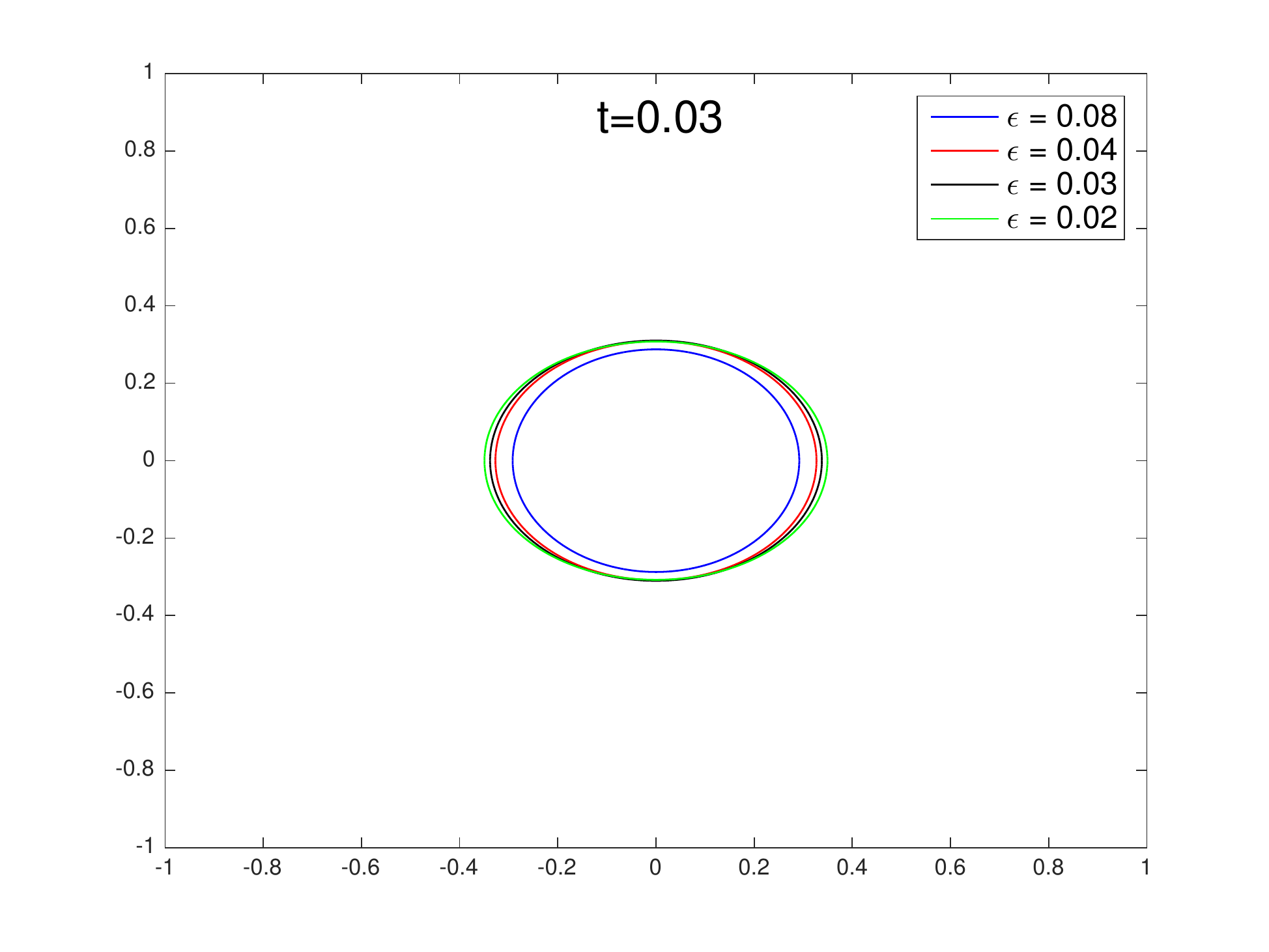} 
\caption{Test 1: Snapshots of the zero-level sets of $u^{\epsilon, k}$
  at $t=0,0.005,0.015,0.03$ and $\epsilon = 0.08,0.04,0.03,0.02$.}
\label{fig:ellipse}
\end{figure}

\paragraph{Test 2}
Consider the following initial condition, which is also adopted in
\cite{feng2008posteriori},
$$
u_0(x,y) = \tanh\Big( ((x-0.3)^2 + y^2 - 0.25^2)/\epsilon \Big) 
\tanh\Big( ((x+0.3)^2 + y^2 - 0.3^2)/\epsilon \Big).
$$

Table
\ref{tab:error1} and \ref{tab:error2} show the errors of spatial
$L^2$, $H^1$ and $H^2$ semi-norms and the rates of convergence at $T =
0.0002$ and $T = 0.001$. $\epsilon = 0.08$ is used to generate the
table. $k = 1\times 10^{-5}$ is chosen so that the error in time is
relatively small to the error in space.  The $L^\infty(H^2)$ norm
error is in agreement with the convergence theorem, but
$L^\infty(L^2)$ and $L^\infty(H^1)$ norm errors are one order higher
than our theoretical results. We note that in
\cite{elliott1989nonconforming}, the second order convergence
for both $L^\infty(L^2)$ and $L^\infty(H^1)$ norms are proved, whereas
only $\frac{1}{\epsilon}$-exponential dependence can be derived.  

\begin{table}[!htbp]
\centering 
\footnotesize
\begin{tabular}{|l||c|c||c|c||c|c|}
\hline  
& $L^{\infty}(L^2)$ error & order &   
$L^{\infty}(H^1)$ error & order &   
$L^{\infty}(H^2)$ error & order \\ \hline    
$h=0.2\sqrt{2}$ & 0.079659 & --- &   
1.761563 & --- &   
34.097686 & --- \\ \hline    
$h=0.1\sqrt{2}$ & 0.023142 & 1.7833 &   
0.642870 & 1.4543 &   
21.604986 & 0.6583\\ \hline    
$h=0.05\sqrt{2}$ & 0.007598 & 1.6067 &   
0.183600 & 1.8080&   
11.783724 & 0.8746\\ \hline    
$h=0.025\sqrt{2}$ & 0.002151  & 1.8201 &   
0.048042 & 1.9342&   
6.045416 &  0.9629\\ \hline    
$h=0.0125\sqrt{2}$ & 0.000557 & 1.9501 &   
0.012167 & 1.9813 &   
3.042138  & 0.9908\\ \hline    
\end{tabular}
\caption{Spatial errors and convergence rates of Test 2: $\epsilon =
  0.08$, $k = 1\times 10^{-5}$, $T = 0.0002$.} \label{tab:error1}
\end{table}

\begin{table}[!htbp]
\centering 
\footnotesize
\begin{tabular}{|l||c|c||c|c||c|c|}
\hline  
& $L^{\infty}(L^2)$ error & order &   
$L^{\infty}(H^1)$ error & order &   
$L^{\infty}(H^2)$ error & order \\ \hline    
$h=0.2\sqrt{2}$ & 0.137170 & --- &   
2.469582 & --- &   
43.008910 & --- \\ \hline    
$h=0.1\sqrt{2}$ & 0.032310 & 2.0859 &   
0.710340 & 1.7977 &   
23.320078 & 0.8831\\ \hline    
$h=0.05\sqrt{2}$ & 0.008830 & 1.8715 &   
0.183932 & 1.9493 &   
11.774451 & 0.9859\\ \hline    
$h=0.025\sqrt{2}$ & 0.002349 & 1.9103 &   
0.046810 & 1.9743 &    
5.927408  &  0.9902\\ \hline    
$h=0.0125\sqrt{2}$ & 0.000597 & 1.9746 &   
0.011764 & 1.9924 &   
2.970322 & 0.9968\\ \hline    
\end{tabular}
\caption{Spatial errors and convergence rates of Test 2: $\epsilon =
  0.08$, $k = 1\times 10^{-5}$, $T = 0.001$.} \label{tab:error2}
\end{table}

Figure \ref{fig:2circle} displays six snapshots at six fixed time
points of the numerical interface with four different $\epsilon$.
Again, they clearly indicate that at each time point the numerical
interface converges to the sharp interface $\Gamma_t$ of the Hele-€揝haw
flow as $\epsilon$ tends to zero. 

\begin{figure}[!htbp]
\centering 
\includegraphics[width=0.45\textwidth]{./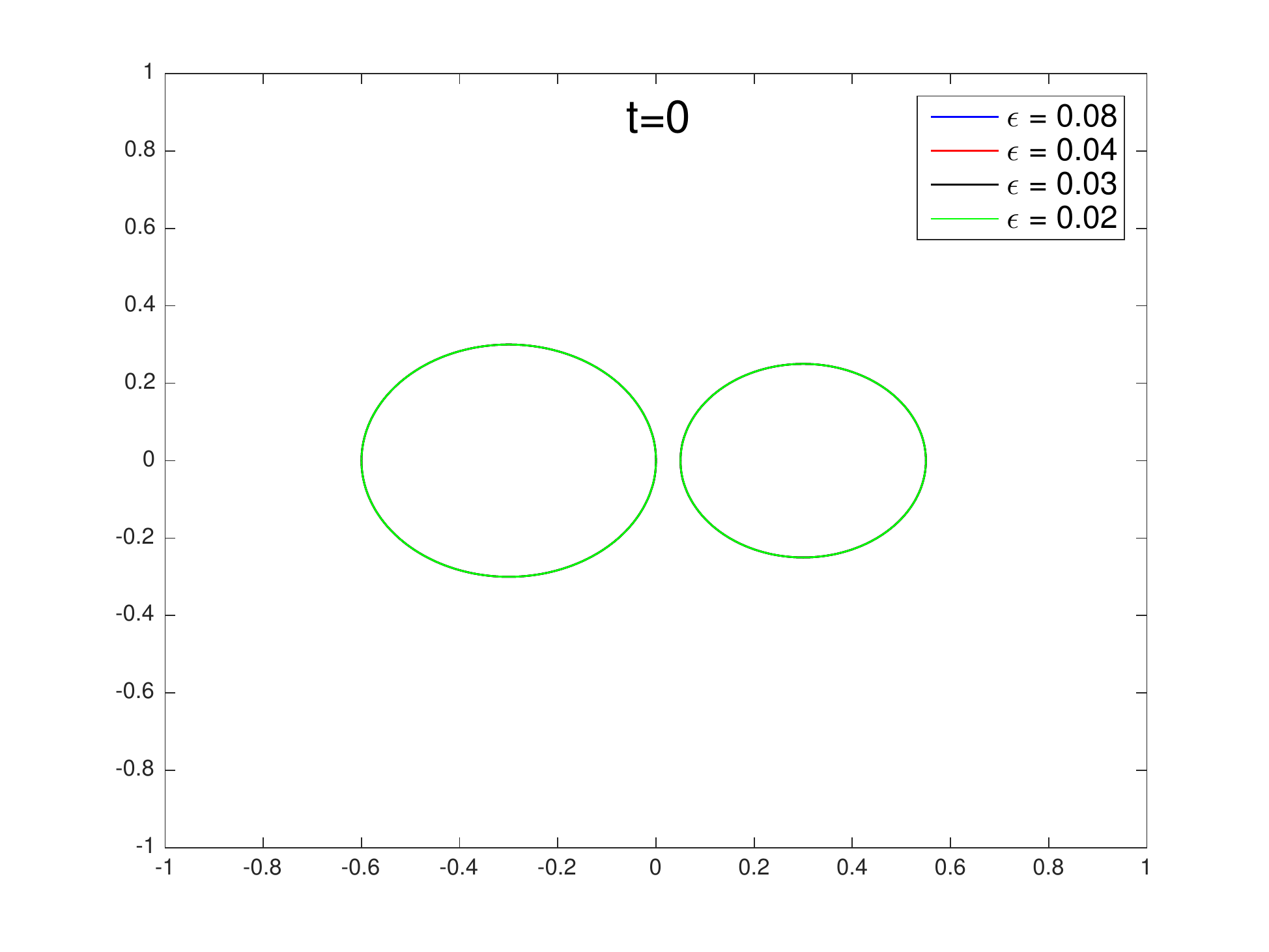} 
\includegraphics[width=0.45\textwidth]{./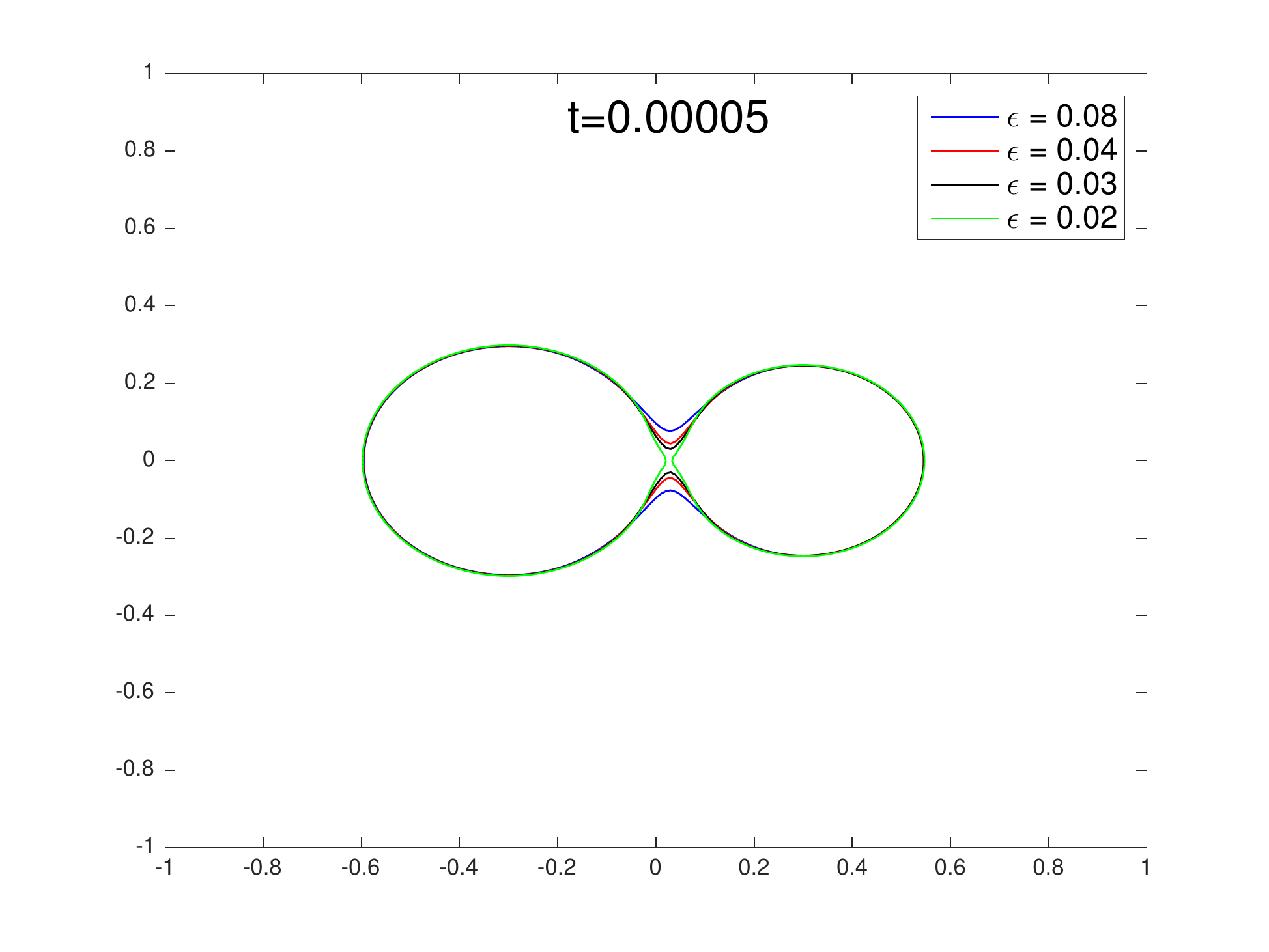} 
\\
\includegraphics[width=0.45\textwidth]{./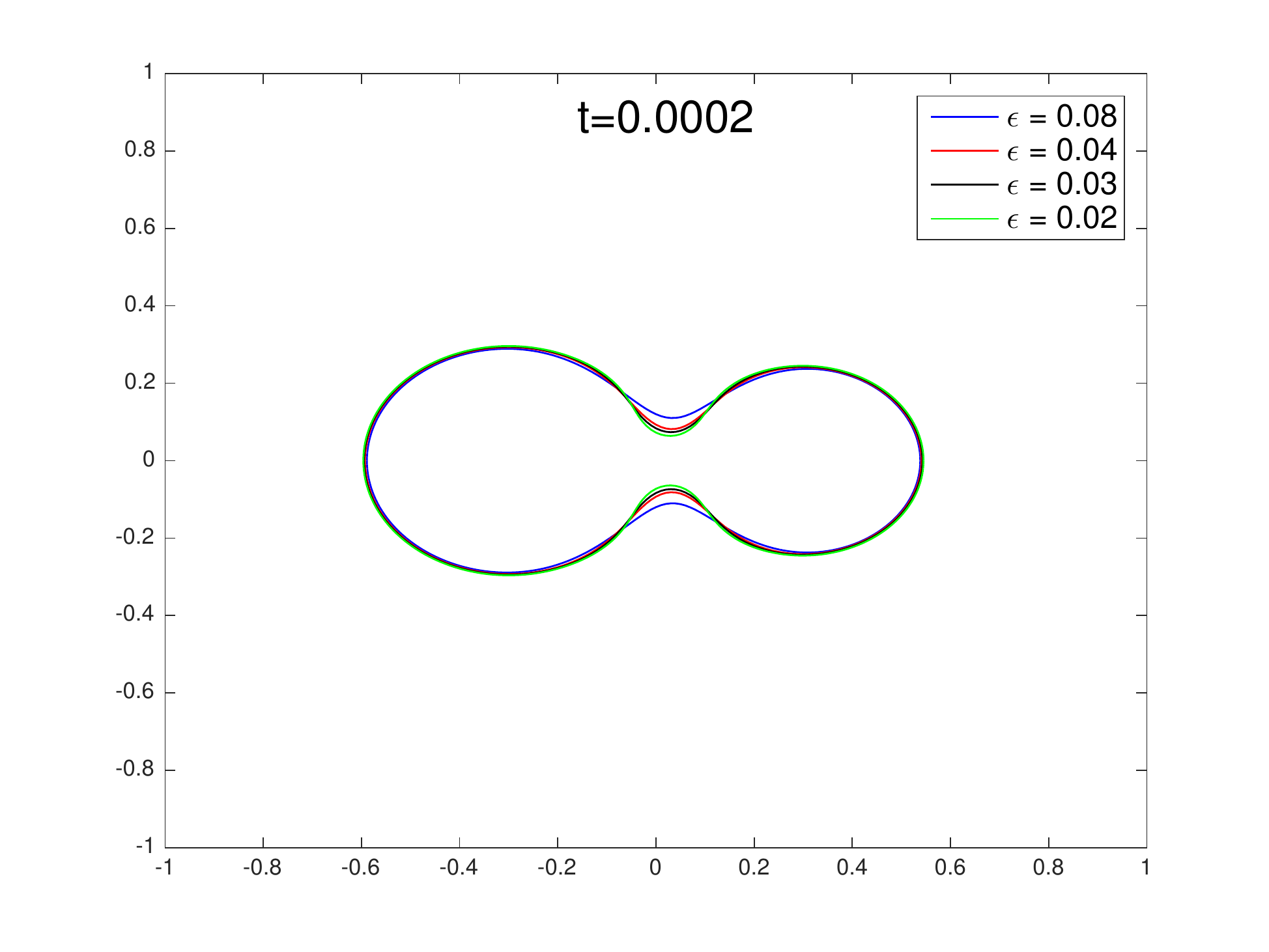} 
\includegraphics[width=0.45\textwidth]{./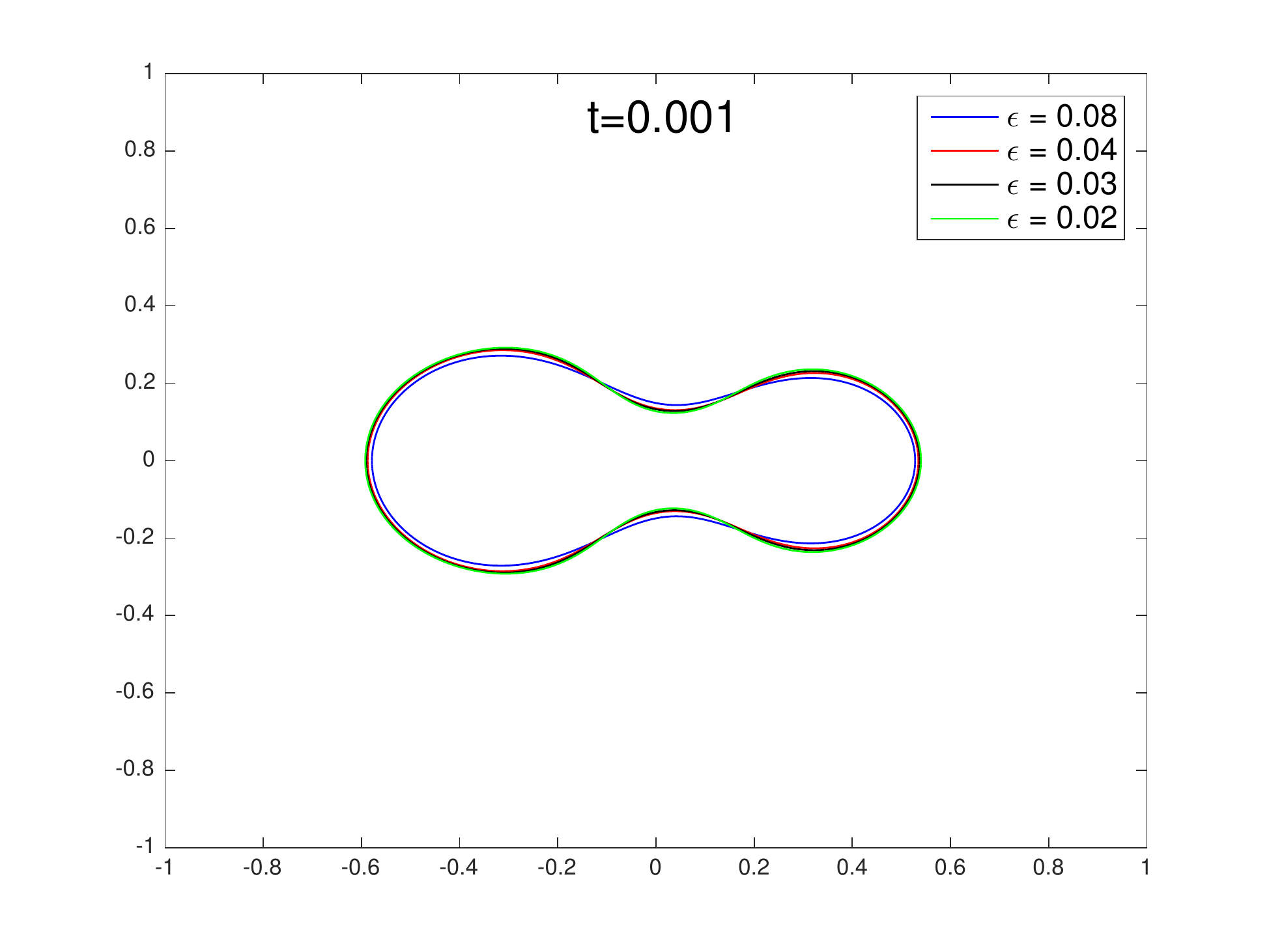} 
\\
\includegraphics[width=0.45\textwidth]{./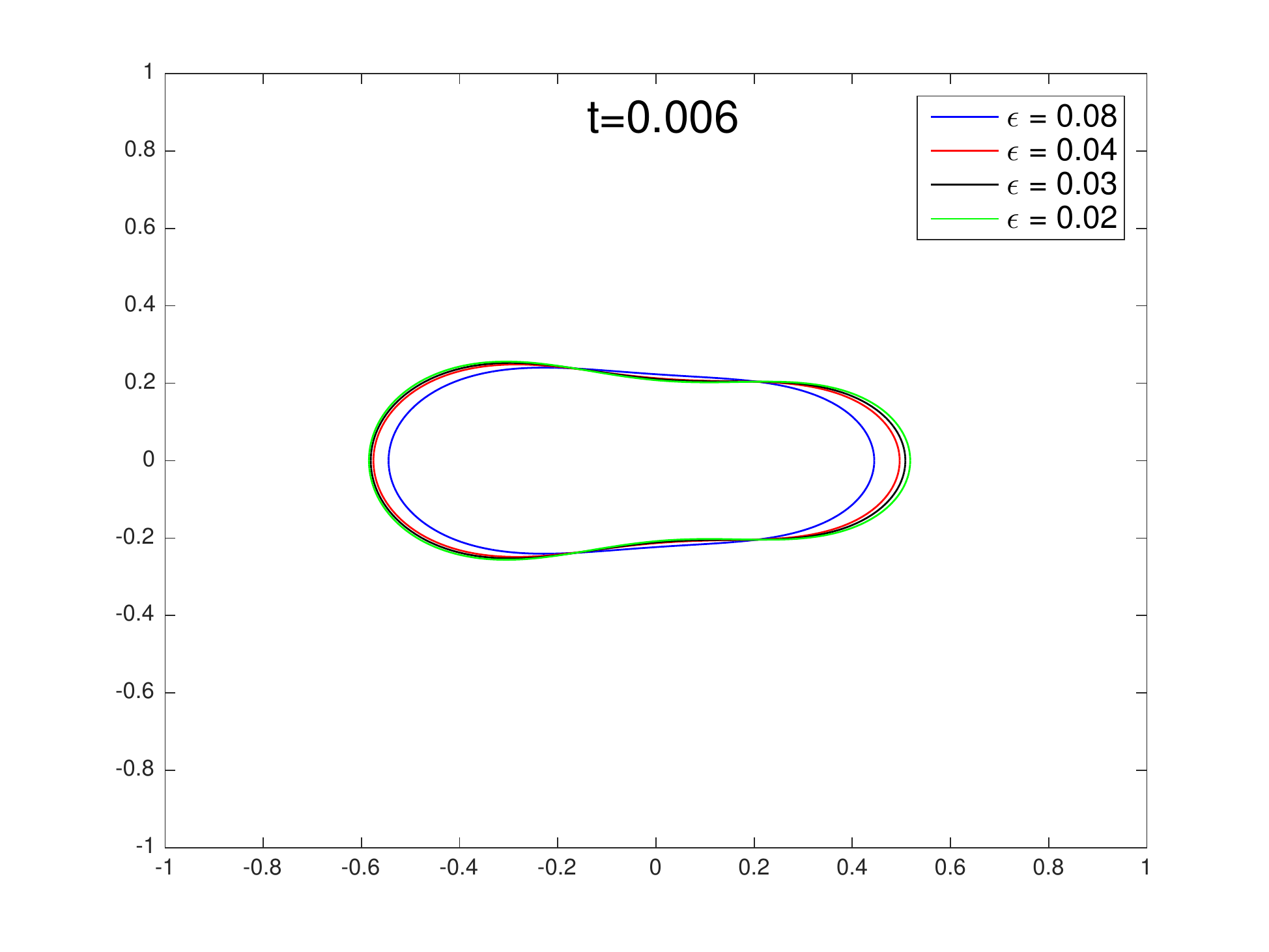} 
\includegraphics[width=0.45\textwidth]{./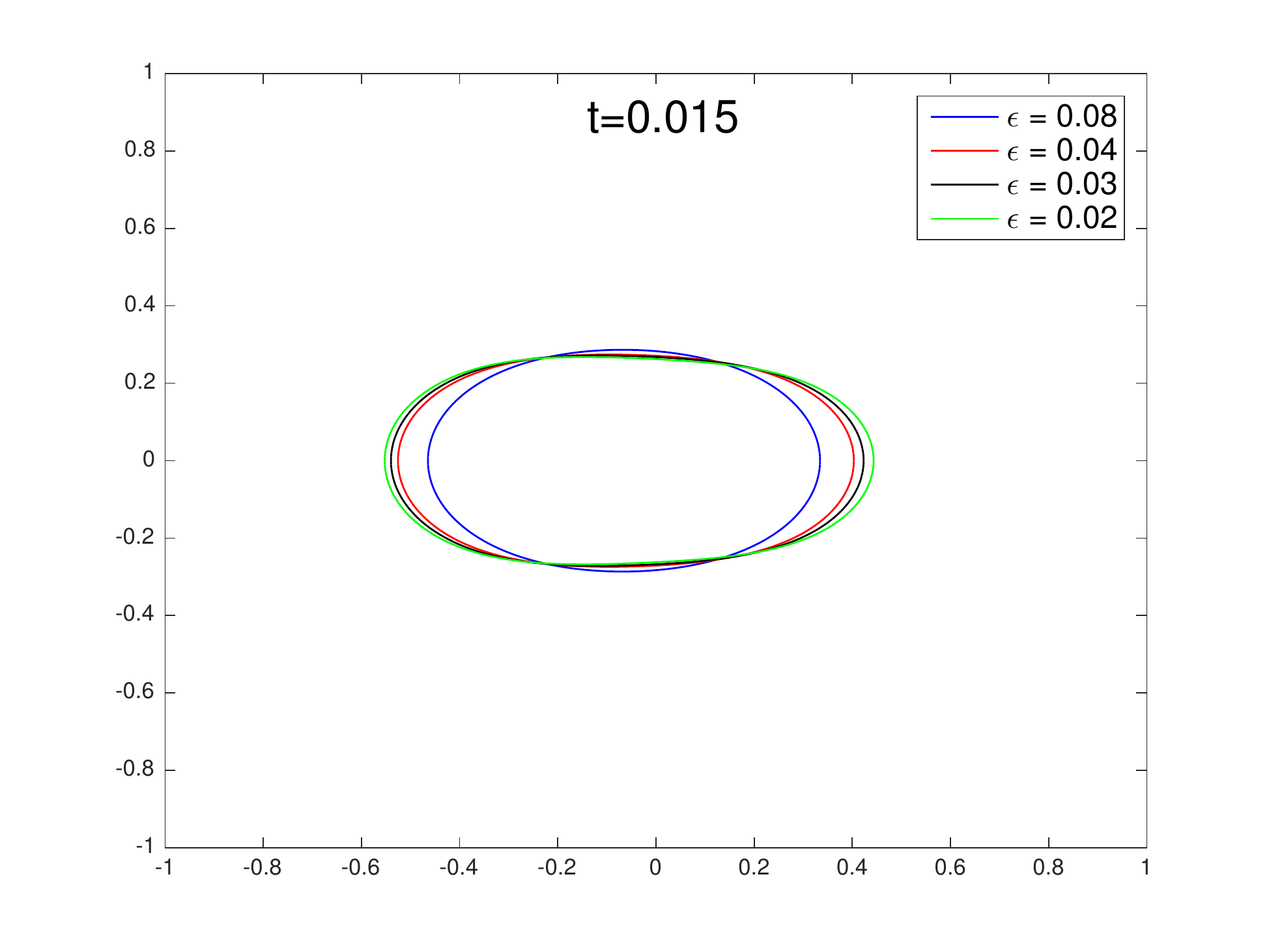} 
\caption{Test 2: Snapshots of the zero-level sets of $u^{\epsilon, k}$
  at $t=0,0.00005,0.0002,0.001, 0.006, 0.015$ and $\epsilon =
    0.08,0.04,0.03,0.02$.}
\label{fig:2circle}
\end{figure}

\section*{Acknowledgements}
The authors Shuonan Wu and Yukun Li highly thank Professor Xiaobing Feng in the University of Tennessee at Knoxville for his motivation for this paper.

\bibliographystyle{siamplain}
\bibliography{Morley_HS} 

\begin{thebibliography}{10}

\bibitem{adams2003sobolev}
{\sc R.~A. Adams and J.~J. Fournier}, {\it Sobolev spaces}, vol.~140, Elsevier,
  2003.

\bibitem{alikakos1994convergence}
{\sc N.~D. Alikakos, P.~W. Bates, and X.~Chen}, {\it Convergence of the
  {C}ahn-{H}illiard equation to the {H}ele-{S}haw model}, Archive for Rational
  Mechanics and Analysis, 128 (1994), pp.~165--205.

\bibitem{allen1979microscopic}
{\sc S.~M. Allen and J.~W. Cahn}, {\it A microscopic theory for antiphase
  boundary motion and its application to antiphase domain coarsening}, Acta
  Metallurgica, 27 (1979), pp.~1085--1095.

\bibitem{aristotelous2013mixed}
{\sc A.~C. Aristotelous, O.~Karakashian, and S.~M. Wise}, {\it A mixed
  discontinuous {G}alerkin, convex splitting scheme for a modified
  {C}ahn-{H}illiard equation and an efficient nonlinear multigrid solver},
  Discrete \& Continuous Dynamical Systems-Series B, 18 (2013).

\bibitem{arnold2002unified}
{\sc D.~N. Arnold, F.~Brezzi, B.~Cockburn, and L.~D. Marini}, {\it Unified
  analysis of discontinuous {G}alerkin methods for elliptic problems}, SIAM
  Journal on Numerical Analysis, 39 (2002), pp.~1749--1779.

\bibitem{bartels2011robust}
{\sc S.~Bartels, R.~M{\"u}ller, and C.~Ortner}, {\it Robust a priori and a
  posteriori error analysis for the approximation of {A}llen-{C}ahn and
  {G}inzburg--{L}andau equations past topological changes}, SIAM Journal on
  Numerical Analysis, 49 (2011), pp.~110--134.

\bibitem{brenner1996two}
{\sc S.~Brenner}, {\it Two-level additive {S}chwarz preconditioners for
  nonconforming finite element methods}, Mathematics of Computation, 65 (1996),
  pp.~897--921.

\bibitem{brenner1999convergence}
{\sc S.~Brenner}, {\it Convergence of nonconforming multigrid methods without
  full elliptic regularity}, Mathematics of Computation, 68 (1999), pp.~25--53.

\bibitem{brenner2015forty}
{\sc S.~C. Brenner}, {\it Forty years of the {C}rouzeix-{R}aviart element},
  Numerical Methods for Partial Differential Equations, 31 (2015),
  pp.~367--396.

\bibitem{brenner2013morley}
{\sc S.~C. Brenner, L.-y. Sung, H.~Zhang, and Y.~Zhang}, {\it A {M}orley finite
  element method for the displacement obstacle problem of clamped {K}irchhoff
  plates}, Journal of Computational and Applied Mathematics, 254 (2013),
  pp.~31--42.

\bibitem{cahn1958free}
{\sc J.~W. Cahn and J.~E. Hilliard}, {\it Free energy of a nonuniform system
  {I}, {I}nterfacial free energy}, Journal of Chemical Physics, 28 (1958),
  pp.~258--267.

\bibitem{chen1994spectrum}
{\sc X.~Chen}, {\it Spectrum for the {A}llen-{C}han, {C}han-{H}illard, and
  phase-field equations for generic interfaces}, Communications in Partial
  Differential Equations, 19 (1994), pp.~1371--1395.

\bibitem{du1991numerical}
{\sc Q.~Du and R.~A. Nicolaides}, {\it Numerical analysis of a continuum model
  of phase transition}, SIAM Journal on Numerical Analysis, 28 (1991),
  pp.~1310--1322.

\bibitem{elliott1989nonconforming}
{\sc C.~M. Elliott and D.~A. French}, {\it A nonconforming finite-element
  method for the two-dimensional {C}ahn-{H}illiard equation}, SIAM Journal on
  Numerical Analysis, 26 (1989), pp.~884--903.

\bibitem{evans1992phase}
{\sc L.~C. Evans, H.~M. Soner, and P.~E. Souganidis}, {\it Phase transitions
  and generalized motion by mean curvature}, Communications on Pure and Applied
  Mathematics, 45 (1992), pp.~1097--1123.

\bibitem{feng2014analysis}
{\sc X.~Feng and Y.~Li}, {\it Analysis of symmetric interior penalty
  discontinuous {G}alerkin methods for the {A}llen-{C}ahn equation and the mean
  curvature flow}, IMA Journal of Numerical Analysis, 35 (2014),
  pp.~1622--1651.

\bibitem{feng2014finite}
{\sc X.~Feng, Y.~Li, and A.~Prohl}, {\it Finite element approximations of the
  stochastic mean curvature flow of planar curves of graphs}, Stochastic
  Partial Differential Equations: Analysis and Computations, 2 (2014),
  pp.~54--83.

\bibitem{feng2016analysis}
{\sc X.~Feng, Y.~Li, and Y.~Xing}, {\it Analysis of mixed interior penalty
  discontinuous {G}alerkin methods for the {C}ahn-{H}illiard equation and the
  {H}ele-{S}haw flow}, SIAM Journal on Numerical Analysis, 54 (2016),
  pp.~825--847.

\bibitem{feng2017finite}
{\sc X.~Feng, Y.~Li, and Y.~Zhang}, {\it Finite element methods for the
  stochastic {A}llen-{C}ahn equation with gradient-type multiplicative noise},
  SIAM Journal on Numerical Analysis, 55 (2017), pp.~194--216.

\bibitem{feng2003numerical}
{\sc X.~Feng and A.~Prohl}, {\it Numerical analysis of the {A}llen-{C}ahn
  equation and approximation for mean curvature flows}, Numerische Mathematik,
  94 (2003), pp.~33--65.

\bibitem{feng2004error}
{\sc X.~Feng and A.~Prohl}, {\it Error analysis of a mixed finite element
  method for the {C}ahn-{H}illiard equation}, Numerische Mathematik, 99 (2004),
  pp.~47--84.

\bibitem{feng2005numerical}
{\sc X.~Feng and A.~Prohl}, {\it Numerical analysis of the {C}ahn-{H}illiard
  equation and approximation for the {H}ele-{S}haw problem}, Interfaces and
  Free Boundaries, 7 (2005), pp.~1--28.

\bibitem{feng2008posteriori}
{\sc X.~Feng and H.-J. Wu}, {\it A posteriori error estimates and an adaptive
  finite element approximations of the {C}ahn-{H}illiard equation and the
  {H}ele-{S}haw flow}, Journal of Computational Mathematics, 26 (2008),
  pp.~767--796.

\bibitem{ilmanen1993convergence}
{\sc T.~Ilmanen et~al.}, {\it Convergence of the {A}llen-{C}ahn equation to
  {B}rakke's motion by mean curvature}, Journal Differential Geometry, 38
  (1993), pp.~417--461.

\bibitem{kovacs2011finite}
{\sc M.~Kov{\'a}cs, S.~Larsson, and A.~Mesforush}, {\it Finite element
  approximation of the {C}ahn-{H}illiard-{C}ook equation}, SIAM Journal on
  Numerical Analysis, 49 (2011), pp.~2407--2429.

\bibitem{li2015numerical}
{\sc Y.~Li}, {\it Numerical methods for deterministic and stochastic phase
  field models of phase transition and related geometric flows}, PhD thesis,
  The University of Tennessee, 2015.

\bibitem{li2017error}
{\sc Y.~Li}, {\it Error analysis of a fully discrete {M}orley finite element
  approximation for the {C}ahn-{H}illiard equation}, arXiv preprint
  arXiv:1712.01338,  (2017).

\bibitem{stoth1996convergence}
{\sc B.~E. Stoth}, {\it Convergence of the {C}ahn-{H}illiard equation to the
  {M}ullins--{S}ekerka problem in spherical symmetry}, Journal of Differential
  Equations, 125 (1996), pp.~154--183.

\bibitem{wu2017multiphase}
{\sc S.~Wu and J.~Xu}, {\it Multiphase {A}llen-{C}ahn and {C}ahn-{H}illiard
  models and their discretizations with the effect of pairwise surface
  tensions}, Journal of Computational Physics, 343 (2017), pp.~10--32.

\bibitem{xu2016stability}
{\sc J.~Xu, Y.~Li, S.~Wu, and A.~Bousquet}, {\it On the stability and accuracy
  of partially and fully implicit schemes for phase field modeling}, arXiv
  preprint arXiv:1604.05402,  (2016).

\end{thebibliography}

\end{document}